\definecolor{refkey}{gray}{.75}
\definecolor{labelkey}{gray}{.75}
\newtheorem{theorem}{Theorem}[section]
\newtheorem{proposition}{Proposition}[section]
\newtheorem{lemma}{Lemma}[section]
\newtheorem{remark}{Remark}[section]
\newcommand{\aiminabs}[1]{\lvert #1 \rvert}
\newcommand{\aiminnorm}[1]{\| #1 \|}
\newcommand{\aimininner}[2]{\langle #1, #2 \rangle}
\numberwithin{equation}{section}
\numberwithin{figure}{section}
\let\oldtocsection=\tocsection
\let\oldtocsubsection=\tocsubsection
\let\oldtocsubsubsection=\tocsubsubsection
\renewcommand{\tocsection}[2]{\hspace{0em}\oldtocsection{#1}{#2}}
\renewcommand{\tocsubsection}[2]{\hspace{2em}\oldtocsubsection{#1}{#2}}
\renewcommand{\tocsubsubsection}[2]{\hspace{4em}\oldtocsubsubsection{#1}{#2}}
\begin{document}
\title{The global attractor of the 2d Boussinesq equations with fractional Laplacian in subcritical case}
\author{Aimin Huang}
\author{Wenru Huo}
\address{The Institute for Scientific Computing and Applied Mathematics, Indiana University, 831 East Third Street, Rawles Hall, Bloomington, Indiana 47405, U.S.A.}
\address{The Department of Mathematics, Indiana University, 831 East Third Street, Rawles Hall, Bloomington, Indiana 47405, U.S.A.}
\email{AH:huangepn@gmail.com}
\email{WH:whuo@imail.iu.edu}

\subjclass[2010]{35Q30, 34D45, 35R11}
\keywords{Boussinesq system, global attractor, fractional laplacian}

\date{\today}

\begin{abstract}
We prove global well-posedness of strong solutions and existence of the global attractor for the 2D Boussinesq system in a periodic channel with fractional Laplacian in subcritical case. The analysis reveals a relation between the Laplacian exponent and the regularity of the spaces of velocity and temperature.
\end{abstract}

\maketitle

\setcounter{tocdepth}{2}
\addtocontents{toc}{~\hfill\textbf{Page}\par}

\section{Introduction}
This paper studies the existence of global attractor for the solution operator ${S(t)}$ to the two-dimensional (2D) incompressible Boussinesq equations with subcritical dissipation. The 2D Boussinesq equations read
\begin{equation}\begin{cases}\label{eq1.1.1}
\partial_t\boldsymbol u + \boldsymbol u\cdot \nabla \boldsymbol u + \nu(-\Delta)^\alpha \boldsymbol u=- \nabla \pi + \theta \boldsymbol e_2, \qquad x \in \Omega, \hspace{2pt} t > 0, \\
\nabla \cdot \boldsymbol u=0,  \qquad\qquad\qquad\qquad\qquad\qquad\qquad x \in \Omega, \hspace{2pt} t > 0,\\
\partial_t \theta +\boldsymbol u\cdot \nabla \theta+ \kappa(-\Delta)^{\beta}\theta = f, \qquad\qquad\qquad x \in \Omega, \hspace{2pt} t > 0,\\
\boldsymbol u(x,0)=\boldsymbol u_0(x),\quad \theta(x,0)=\theta_0(x), \qquad \qquad x \in \Omega, \hspace{2pt} t > 0,
\end{cases}\end{equation}
where $\Omega=[0, L]^2$ is the periodic domain, $\nu>0$ the fluid viscosity, and $\kappa>0$ the  diffusivity;  $\boldsymbol u =\boldsymbol u(x,t)=(u_1(x,t), u_2(x,t))$ denotes the velocity, $\pi=\pi(x,t)$  the pressure, $\theta= \theta(x,t)$ a scalar function  which may for instance represents the temperature variation in the content of thermal convection, $\boldsymbol e_2=(0,1)$  the unit vector in the vertical direction, and $f=f(x)$  a time-independent forcing term.
Since in this article we consider 2D Boussinesq equations with a subcritical dissipation, we assume that the exponents $\alpha$ and $\beta$ satisfy
\begin{equation} \label{eq1.2}
\alpha,\; \beta \in (\frac{1}{2},1).
\end{equation}
Recently, the 2D Boussinesq equations and their fractional generalizations have attracted considerable attention due to their physical applications and mathematical significance. When $\alpha=\beta=1$, the system \eqref{eq1.1.1} is then called the standard 2D Boussinesq equations, which  are widely used to model the geophysical flows such as atmospheric fronts and oceanic circulation and also play an important role in the study of Rayleigh-B\'enard convection (c.f. \cite{Ped87}).
Besides, the 2D Boussinesq system  are the two-dimensional models which retain the key vortex-stretching mechanism as the 3D Navier-Stokes/Euler equations for axisymmetric swirling flows (c.f. \cite{MB02}).

In the mathematical respect, the global well-posedness, global regularity of the standard 2D Boussinesq system as well as the existence of the global attractor have been widely studied, see for example \cite{CD80, FMT87, ES94, CN97, MZ97, Wang05, CLR06, Wang07, KTW11}.
Recently, there are many works devoted to the study of the 2D Boussinesq system with partial viscosity, see for example \cite{HL05, Cha06, HK07, DP09, HK09,  HKR11} in the whole space $\mathbb R^2$ and \cite{Zha10, LPZ11, HKZ13, Hua15b} in bounded domains. There are also many works which considered the global regularity of 2D Boussinesq system with fractional diffusion, see for example \cite{WX12, JMWZ14, XX14, YJW14, SW14}.

In some realistic applications, the variation of the  viscosity and diffusivity with the temperature may not be disregarded (see for example \cite{LB96} and references therein) and there are many works on this direction, too, see for example \cite{LB96, LB99, SZ13, LPZ13, Hua15a} where the existence of weak solutions, global regularity, and existence of global attractor have been studied.
However, the global regularity for the inviscid 2D Boussinesq system where $\nu=\kappa=0$ is still an outstanding open problem and the study of fractional Boussinesq system may shed some light on the inviscid Boussinesq system.
To the best of our knowledge, the existence of the global attractor for the 2D Boussinesq equations with fractional dissipation has not been addressed yet, which is the goal of this article.

This work is motivated by the \cite{CC04, Ju05}, where the existence of global attractor of the 2D subcritical SQG equations has been proved. The key point in \cite{CC04, Ju05} is that they proved a positivity lemma for the fractional Laplacian. Armed with the positivity lemma, we can similarly obtain the maximum principle for $\theta$ as in \cite{CC04, Ju05} and then follow the standard procedure to show the existence of global attractor. We point out that our estimates are much more involved and harder than those in \cite{Ju05} since the velocity is given by an evolution equation instead of the Riesz potential of $\theta$ and we have to use a variant of uniform Gronwall lemma (see Lemma \ref{lemma 2.2}) to prove the uniform bound for the velocity $\boldsymbol u$.

In this article, we prove the existence of a global attractor for \eqref{eq1.1.1} in the space $H^{s_1} \times H^{s_2}$, where
\begin{equation}\label{cond1}
s_1> 2\mathrm{max}\{1-\alpha, 1-\beta \}, \qquad s_2 \geq 1,
\end{equation}
and
\begin{equation} \label{cond2}
0 \leq s_2-s_1 < \alpha+\beta.
\end{equation}
Condition\eqref{cond1} is natural for $s_1$ in that the smaller $\alpha$ and $\beta$ are, the less regularity we can obtain from the equation \eqref{eq1.1.1}$_3$, and hence we need to assume more regularity for the initial data $\theta_0$ (that is larger $s_1$) to compensate for reduced smoothing effect. We need at least $H^1$-regularity (that is $s_2\geq 1$) for $\boldsymbol u$ to show the $L^p$-estimate for $\boldsymbol u$, see Section \ref{sec3.3}. An interpretation of condition \eqref{cond2} is that the interplay of $\theta$ and $\boldsymbol u$ in \eqref{eq1.1.1}  restricts the gap between $s_1$ and $s_2$.

The roadmap of this article is as follows. In Section \ref{sec2}, we introduce the notation, some preliminary results, state our main results, invoke the results from \cite{Tem88} to prove the existence of the global attractor in $H^{s_1} \times H^{s_2}$ space for $s_1$ and $s_2$ satisfying \eqref{cond1} and \eqref{cond2}.  Section \ref{sec3} is devoted to the proof of the $H^s$-uniform bounds and for $(\theta, \boldsymbol u)$, where we first show the uniform $L^p$-estimates for $p \geq 2$ in subsections \ref{sec3.1}-\ref{sec3.3}. Then, in subsection \ref{sec3.4}, we prove that the compactness of the absorbing ball in $H^{s_1} \times H^{s_2}$, which is the key of proof of the compact global attractor.
In Section \ref{sec4}, we show that the solution operator $\{S(t): t \geq 0 \}$ is continuous from $\mathbb{R}_{+}$ to
$H^{s_1} \times H^{s_2}$ and it is continuous from $H^{s_1} \times H^{s_2}$ to $H^{s_1} \times H^{s_2}$ for fixed $t \geq 0$. These also give a proof of the existence and the uniqueness of the strong solution for \eqref{eq1.1.1}.
\section{Notations and preliminaries} \label{sec2}
\subsection{Notations and function spaces}
We denote by $\mathcal{C}(I, X)$ the space of all continuous functions from the interval $I$ to some normed space $X$, and by $L^{p}(\Omega)$ the space of the $p$th-power integrable functions normed by
$$\aiminnorm{f}_{L^p}=\left(\int_{\Omega} |f(x)|^p {\mathrm{d}x}\right)^{\frac{1}{p}},
\qquad \aiminnorm{f}_{L^{\infty}}= \mathrm{ess}\sup\limits_{x \in \Omega} |f(x)|.$$
We also denote by $L^{p}(0,T;X)$ the space of all measurable functions
$u : [0,T] \mapsto X$ with the norm
\[
\aiminnorm{u}^p_{L^{p}(0,T;X)}= \int_0^T \aiminnorm{u}^p_{X} {\mathrm{d}t},
\qquad \aiminnorm{u}_{L^{\infty}(0,T;X)}=  \mathrm{ess}\sup\limits_{t \in [0,T]}
\aiminnorm{u}_{X}.
\]
The Fourier transform $\hat{f}$ of a tempered distribution $f(x)$ on the periodic domain $\Omega$ is defined as
$$\widehat{f}(k)= \frac{1}{(2\pi)^2} \int_{\Omega} f(x)e^{-ik \cdot x} {\mathrm{d}x}, $$
where $k=(k_1,k_2)$ is a tuple consisting two integers.
We denote the square root of the Laplacian $(-\Delta)^{\frac{1}{2}}$ by $\Lambda$ and we have
$$\widehat{\Lambda f}(k)= |k| \widehat{f}(k),$$
where $\aiminabs{k}=\sqrt{k_1^2+k_2^2}$. We define the fractional Laplacian $\Lambda^{s}f$ for
$s \in \mathbb{R}$ by its Fourier series
\[
\Lambda^{s}f := \sum_{k \in \mathbb{Z}^2} \aiminabs{k}^{s} \widehat{f}(k)e^{ik \cdot x}.
\]
For any tempered distribution $f$ on $\Omega$ and $s \in \mathbb{R}$,  we define the norm
$$\aiminnorm{f}_{H^s}= \aiminnorm{\Lambda^s f}_{L^2}=\left (\sum_{k \in \mathbb{Z}^2}
|k|^{2s} |\widehat{f}(k)|^2 \right)^{\frac{1}{2}}$$
and $H^{s}(\Omega)$ denotes the space of the functions $f$ such that
$\aiminnorm{f}_{H^s}$ is finite. For $1 \leq p \leq \infty$ and $s \in \mathbb{R}$, the space $H^{s,p}(\Omega)$ consists of the functions $f$
such that $f= \Lambda^{-s} g$ for some $g \in L^p(\Omega)$. The $H^{s,p}$-norm of $f$ is defined by
\[
\aiminnorm{f}_{H^{s,p}} = \aiminnorm{\Lambda^{s} f}_{L^{p}}.
\]
\subsection{Some preliminary results}
We first recall the Gagliardo-Nirenberg interpolation inequality, which is used frequently in this article.
\begin{lemma}[The interpolation inequality]\label{lemma2.0}
For any $s_1\leq s\leq s_2$, we have
\[
\aiminnorm{ \Lambda^s g}_{L^2} \leq \aiminnorm{ \Lambda^{s_1} g}_{L^2}^{\delta} \aiminnorm{ \Lambda^{s_2} g}_{L^2}^{1-\delta},
\]
where $s= \delta s_1 +  (1-\delta) s_2$ for some $0 \leq \delta \leq 1$.
\end{lemma}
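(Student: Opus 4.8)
\emph{Proof proposal.} The plan is to reduce the statement to H\"older's inequality on the Fourier side. First I would use the very definition of the $H^s$-norm recalled above to write, for each of the three exponents,
\[
\aiminnorm{\Lambda^s g}_{L^2}^2 = \sum_{k \in \mathbb{Z}^2} \aiminabs{k}^{2s}\,\aiminabs{\widehat g(k)}^2,
\]
and likewise with $s$ replaced by $s_1$ and by $s_2$. The key algebraic observation is that the relation $s = \delta s_1 + (1-\delta)s_2$ factors the Fourier weight as $\aiminabs{k}^{2s} = \bigl(\aiminabs{k}^{2s_1}\bigr)^{\delta}\bigl(\aiminabs{k}^{2s_2}\bigr)^{1-\delta}$, while trivially $\aiminabs{\widehat g(k)}^2 = \bigl(\aiminabs{\widehat g(k)}^2\bigr)^{\delta}\bigl(\aiminabs{\widehat g(k)}^2\bigr)^{1-\delta}$; multiplying the two identities splits each summand as $a_k^{\delta} b_k^{1-\delta}$ with $a_k := \aiminabs{k}^{2s_1}\aiminabs{\widehat g(k)}^2$ and $b_k := \aiminabs{k}^{2s_2}\aiminabs{\widehat g(k)}^2$.

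I would then dispose of the endpoint cases $\delta \in \{0,1\}$, which are immediate, and for $0 < \delta < 1$ apply the discrete H\"older inequality with conjugate exponents $p = 1/\delta$ and $q = 1/(1-\delta)$ to $\sum_k a_k^{\delta} b_k^{1-\delta}$. This gives
\[
\aiminnorm{\Lambda^s g}_{L^2}^2 = \sum_{k} a_k^{\delta}\, b_k^{1-\delta} \leq \Bigl(\sum_k a_k\Bigr)^{\delta}\Bigl(\sum_k b_k\Bigr)^{1-\delta} = \aiminnorm{\Lambda^{s_1} g}_{L^2}^{2\delta}\,\aiminnorm{\Lambda^{s_2} g}_{L^2}^{2(1-\delta)},
\]
and taking square roots yields the assertion. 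If $\widehat g(0) \neq 0$ one peels off the $k = 0$ term first; with the convention $0^0 = 1$ it satisfies the bound trivially (or one simply restricts to mean-zero $g$, as is customary in the periodic setting).

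The argument uses no compactness, density, or regularity input beyond $g \in H^{s_1}\cap H^{s_2}$, and indeed the inequality holds vacuously when the right-hand side is infinite, so no a priori finiteness is needed. The only point requiring a little care — and the sole obstacle, a mild one — is the exponent bookkeeping: one must check that the \emph{same} $\delta$ governs both factorizations and that $p$ and $q$ are genuinely conjugate, which is exactly what $s = \delta s_1 + (1-\delta)s_2$ with $0 \leq \delta \leq 1$ provides.
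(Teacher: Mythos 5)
Your proof is correct: the paper states this interpolation inequality without proof (it is recalled as a standard fact), and your argument --- factoring the Fourier weight $\aiminabs{k}^{2s}=\bigl(\aiminabs{k}^{2s_1}\bigr)^{\delta}\bigl(\aiminabs{k}^{2s_2}\bigr)^{1-\delta}$ and applying the discrete H\"older inequality with conjugate exponents $1/\delta$ and $1/(1-\delta)$ --- is exactly the standard proof the authors are implicitly relying on. Your handling of the endpoint cases $\delta\in\{0,1\}$ and of the $k=0$ mode is also appropriate, so there is nothing to add.
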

Next, we recall the Gronwall Lemma and Uniform Gronwall Lemma.
\begin{lemma}[Gronwall Lemma] \label{lemma2.0.1}
Let $g$, $h$, $y$ and $\frac{{\mathrm{d}y}}{{\mathrm{d}t}}$ be locally integrable functions on $(t_0, +\infty)$ such that
\[
\frac{{\mathrm{d}y}(t)}{{\mathrm{d}t}} \leq g(t)y(t) + h(t), \qquad \forall t \geq t_0,
\]
then $y(t)$ satisfies the following inequality,
\[
y(t) \leq y(t_0)\mathrm{exp}\left(\int_{t_0}^t g(s) {\mathrm{d}s} \right)+
\int_{t_0}^t h(s) \mathrm{exp}\left(\int_{s}^t g(\tau) {\mathrm{d}\tau} \right){\mathrm{d}s}.
\]
\end{lemma}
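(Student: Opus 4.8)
The plan is to use the classical integrating-factor argument. First I would introduce the integrating factor
\[
G(t) = \exp\left(-\int_{t_0}^t g(s)\,\mathrm{d}s\right),
\]
which is well-defined and strictly positive because $g$ is locally integrable on $(t_0,+\infty)$, and which is absolutely continuous on every compact subinterval of $[t_0,+\infty)$ with $G'(t) = -g(t)G(t)$ for almost every $t$. Note also that $G(t_0)=1$ and $G(s)/G(t) = \exp\left(\int_s^t g(\tau)\,\mathrm{d}\tau\right)$.

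Next I would multiply the differential inequality $\frac{\mathrm{d}y}{\mathrm{d}t} \le g(t)y(t)+h(t)$ by $G(t)>0$ and recognize the left-hand side as a total derivative: since $y$ is absolutely continuous (its derivative $\mathrm{d}y/\mathrm{d}t$ being locally integrable), the product $G(t)y(t)$ is absolutely continuous and
\[
\frac{\mathrm{d}}{\mathrm{d}t}\bigl(G(t)y(t)\bigr) = G(t)\frac{\mathrm{d}y}{\mathrm{d}t} + G'(t)y(t) = G(t)\Bigl(\frac{\mathrm{d}y}{\mathrm{d}t} - g(t)y(t)\Bigr) \le G(t)h(t)
\]
for almost every $t \ge t_0$. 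Integrating from $t_0$ to $t$ and using $G(t_0)=1$ gives
\[
G(t)y(t) \le y(t_0) + \int_{t_0}^t G(s)h(s)\,\mathrm{d}s,
\]
and dividing by $G(t)>0$, then substituting $1/G(t) = \exp\left(\int_{t_0}^t g(s)\,\mathrm{d}s\right)$ and $G(s)/G(t) = \exp\left(\int_s^t g(\tau)\,\mathrm{d}\tau\right)$, yields exactly the asserted inequality.

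The only point requiring a little care is the measure-theoretic justification that $t \mapsto G(t)y(t)$ is absolutely continuous, so that the fundamental theorem of calculus applies and the product rule holds almost everywhere; this is routine under the stated local-integrability hypotheses, since $t \mapsto \int_{t_0}^t g$ is absolutely continuous, $\exp$ is Lipschitz on bounded sets (hence $G$ is absolutely continuous on compact intervals), and a product of two absolutely continuous functions on a compact interval is again absolutely continuous. Thus I do not expect any genuine obstacle: the statement is the standard Gronwall inequality and the argument above is self-contained.
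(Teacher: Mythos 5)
Your proof is correct: the integrating-factor argument with $G(t)=\exp(-\int_{t_0}^t g)$ is the standard derivation of this inequality, and your attention to the absolute continuity of $y$ and of the product $G(t)y(t)$ is exactly the right measure-theoretic care under the stated local-integrability hypotheses. The paper itself offers no proof of this lemma (it is merely recalled as a classical result), so there is nothing to contrast with; your argument is the canonical one.
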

\begin{lemma}[Uniform Gronwall Lemma]\label{lemma 2.1}
Let $g$, $h$ and $y$ be non-negative locally integrable functions on $(t_0, +\infty)$ such that
$$\frac{{\mathrm{d}y}(t)}{{\mathrm{d}t}} \leq g(t)y(t)+h(t), \qquad \forall t \geq t_0,$$
and
$$\int_{t}^{t+r} g(s) {\mathrm{d}s} \leq a_1, \qquad \int_{t}^{t+r} h(s) {\mathrm{d}s} \leq a_2, \qquad \int_{t}^{t+r} y(s) {\mathrm{d}s} \leq a_3, \qquad  \forall t \geq t_0,$$
where $r, a_1, a_2$ and $a_3$ are positive constants. Then
$$y(t+r) \leq \left(\frac{a_3}{r}+a_2\right)e^{a_1}, \qquad  \forall t \geq t_0.$$
\end{lemma}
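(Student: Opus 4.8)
The plan is to derive the uniform bound from the ordinary Gronwall Lemma \ref{lemma2.0.1} by applying it on a sliding window and then averaging the resulting inequality. Fix $t \geq t_0$ and take any $s \in [t, t+r]$. Applying Lemma \ref{lemma2.0.1} on the interval $[s, t+r]$ (with initial time $s$) gives
\[
y(t+r) \leq y(s)\exp\left(\int_s^{t+r} g(\tau)\,\mathrm{d}\tau\right) + \int_s^{t+r} h(\tau)\exp\left(\int_\tau^{t+r} g(\sigma)\,\mathrm{d}\sigma\right)\mathrm{d}\tau.
\]

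Next I would use the sign hypotheses to replace the integrals over sub-windows by their values on the full window $[t,t+r]$. Since $g \geq 0$, for every $s,\tau \in [t,t+r]$ we have $\int_s^{t+r} g \leq \int_t^{t+r} g \leq a_1$ and likewise $\int_\tau^{t+r} g \leq a_1$, so each exponential factor above is at most $e^{a_1}$. Since $h \geq 0$, also $\int_s^{t+r} h(\tau)\,\mathrm{d}\tau \leq \int_t^{t+r} h \leq a_2$. Combining these estimates yields
\[
y(t+r) \leq e^{a_1}\, y(s) + a_2\, e^{a_1}, \qquad \forall\, s \in [t, t+r].
\]

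Finally I would integrate this inequality in $s$ over $[t,t+r]$. The left-hand side does not depend on $s$, so integration produces $r\, y(t+r)$ on the left, while on the right the hypothesis $\int_t^{t+r} y(s)\,\mathrm{d}s \leq a_3$ gives $r\, y(t+r) \leq a_3\, e^{a_1} + r\, a_2\, e^{a_1}$. Dividing by $r>0$ yields $y(t+r) \leq \left(\tfrac{a_3}{r} + a_2\right)e^{a_1}$, and since $t \geq t_0$ was arbitrary this is exactly the claim; this is the classical argument from \cite{Tem88}.

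The proof is essentially routine, and the only step needing attention is the monotonicity reduction in the second paragraph, where the non-negativity of both $g$ and $h$ is what allows the integrating factors and the forcing integral on an arbitrary sub-window to be controlled by their values on $[t,t+r]$ — without the sign conditions this estimate, and hence the conclusion, would fail. One should also remark that $y$ being locally integrable with locally integrable derivative makes $y$ absolutely continuous, which legitimizes the pointwise evaluation $y(t+r)$ and the use of Lemma \ref{lemma2.0.1} on the arbitrary subinterval $[s,t+r]$.
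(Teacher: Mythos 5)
Your proof is correct and is precisely the classical argument from \cite{Tem88}, which the paper cites in lieu of giving its own proof: apply the ordinary Gronwall inequality on $[s,t+r]$, use the non-negativity of $g$ and $h$ to bound the integrating factors and forcing term by $e^{a_1}$ and $a_2$, and average over $s\in[t,t+r]$ using the bound on $\int_t^{t+r}y$. Nothing further is needed.
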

For the proof of uniform Gronwall Lemma, one can refer to \cite[pp. 91] {Tem88} and \cite{FP67} for its proof. We now present a variant of uniform Gronwall lemma, which is the key to prove the uniform estimates for the velocity $\boldsymbol u$. We refer the interesting readers to \cite{GPZ09, Pat11} for other variants of the uniform Gronwall lemma.
\begin{lemma}\label{lemma 2.2}
Let $\lambda>0$ be a positive constant and $g(t)$, $y(t)$ be non-negative locally integrable functions on $(0, +\infty)$ such that
\[
\frac{{\mathrm{d}y}(t)}{{\mathrm{d}t}} + \lambda y(t) \leq g(t), \qquad \forall t \geq t_0,
\]
and
\[\int_{t}^{t+1} g(s) {\mathrm{d}s} \leq a_1, \qquad  \forall t \geq t_0>0.
\]
Then, there exists $t^{*}(t_0, y_0) >0$ large enough, such that for all $t > t^{*}(t_0, y_0)$, we have
\[
y(t)\leq C,
\]
for some constant $C>0$ independent of the time $t$ and initial data $y_0$.
\end{lemma}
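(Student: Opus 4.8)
The plan is to integrate the differential inequality with the integrating factor $e^{\lambda t}$ and then control the resulting convolution of $g$ against the exponential kernel by exploiting the hypothesis that $g$ has bounded integral over every unit time interval.

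First I would multiply $\frac{\mathrm{d}y}{\mathrm{d}t} + \lambda y \leq g$ by $e^{\lambda t}$ and integrate from $t_0$ to $t$, obtaining, for all $t \geq t_0$,
\[
y(t) \leq y(t_0)\, e^{-\lambda(t-t_0)} + \int_{t_0}^{t} e^{-\lambda(t-s)} g(s)\, \mathrm{d}s .
\]
Here $y$ is continuous (being locally integrable with locally integrable derivative), so $y(t_0)$ is well defined and finite. The first term decays to $0$ as $t\to\infty$, hence it is $\leq 1$ for every $t$ larger than some $t^{*}(t_0,y_0)>0$; this is the only place where the dependence on $y_0$ (through $y(t_0)$) and on $t_0$ enters, and it is harmless since $t^{*}$ is allowed to depend on these quantities.

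The heart of the argument is the uniform-in-$t$ bound for the convolution term. For $t \geq t_0+1$, set $n = \lfloor t-t_0\rfloor \geq 1$ and decompose $[t_0,t]$ into the unit intervals $[t-j-1,\,t-j]$, $j=0,\dots,n-1$, together with the leftover piece $[t_0,\,t-n] \subseteq [t_0,\,t_0+1]$. On $[t-j-1,t-j]$ one has $e^{-\lambda(t-s)} \leq e^{-\lambda j}$ and $\int_{t-j-1}^{t-j} g \leq a_1$ (the interval lies in $[t_0,\infty)$ precisely because $j \leq n-1 \leq t-t_0-1$); on the leftover piece $e^{-\lambda(t-s)} \leq e^{-\lambda n}$ and $\int_{t_0}^{t-n} g \leq \int_{t_0}^{t_0+1} g \leq a_1$. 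Summing the geometric series gives
\[
\int_{t_0}^{t} e^{-\lambda(t-s)} g(s)\, \mathrm{d}s \;\leq\; a_1 \sum_{j=0}^{\infty} e^{-\lambda j} \;=\; \frac{a_1}{1-e^{-\lambda}} .
\]
Combining with the first estimate yields $y(t) \leq 1 + \dfrac{a_1}{1-e^{-\lambda}} =: C$ for all $t > t^{*}(t_0,y_0)$, and $C$ depends only on $\lambda$ and $a_1$, not on $t$ or $y_0$, as required.

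The argument is entirely elementary and I do not expect a genuine obstacle; the only mildly delicate point is the bookkeeping in the decomposition of $[t_0,t]$, ensuring that every sub-interval on which the hypothesis $\int g \leq a_1$ is invoked actually sits inside $[t_0,\infty)$, which is exactly why the estimate is stated for $t \geq t_0+1$ and the remaining transient is absorbed into $t^{*}$.
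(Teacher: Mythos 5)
Your proof is correct and follows essentially the same route as the paper: an integrating factor $e^{\lambda t}$, a decomposition of $[t_0,t]$ into unit subintervals, and summation of the resulting geometric series, with the transient term $e^{-\lambda(t-t_0)}y(t_0)$ absorbed into the choice of $t^{*}(t_0,y_0)$. The only (cosmetic) difference is that you anchor the unit intervals at the right endpoint $t$ rather than at $t_0$, which yields the marginally sharper constant $a_1/(1-e^{-\lambda})$ in place of the paper's $a_1 e^{2\lambda}/(e^{\lambda}-1)$.
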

\begin{proof} Multiplying $e^{\lambda t}$ and integrating on both side of the equation above from $t_0$ to some $t > t_0$, we have
$$y(t) \leq e^{-\lambda (t-t_0)}y(t_0)+ e^{-\lambda t} \int_{t_0}^t e^{\lambda s} g(s) {\mathrm{d}s}.$$
Suppose $t_0+m \leq t \leq t_0+m+1$ for some integer $m$, then
\begin{equation} \label{eq2.2.1}
\begin{split}
y(t)
&\leq e^{-\lambda (t-t_0)}y(t_0) + e^{-\lambda t} \sum_{k=0}^{m} \int_{t_0+k}^{t_0+k+1} e^{\lambda s} g(s) {\mathrm{d}s} \\
&\leq e^{-\lambda (t-t_0)}y(t_0) + e^{-\lambda t} \sum_{k=0}^{m}  e^{\lambda (t_0+k+1)} \int_{t_0+k}^{t_0+k+1} g(s) {\mathrm{d}s} \\
&\leq e^{-\lambda (t-t_0)}y(t_0) + a_1 e^{-\lambda t} \sum_{k=0}^{m}  e^{\lambda (t_0+k+1)} \\
&\leq e^{-\lambda (t-t_0)}y(t_0)+ a_1 e^{\lambda (t_0+m+1-t)} \frac{e^{\lambda}}{e^{\lambda}-1}  \\
&\leq e^{-\lambda (t-t_0)}y(t_0)+ a_1\frac{e^{2\lambda}}{e^{\lambda}-1}.
\end{split}
\end{equation}
We thus finished the proof of Lemma~\ref{lemma 2.2}.
\end{proof}
We now recall an improved positivity lemma due to \cite{Ju05}.
\begin{lemma}[Improved Positivity Lemma]\label{lemma 2.3}
Suppose $s \in [0,2]$ and $\theta$, $\Lambda^s \theta \in L^p(\Omega)$. Then
$$\int_{\Omega} |\theta|^{p-2}\theta \Lambda^s \theta {\mathrm{d}x} \geq
\frac{2}{p} \int_{\Omega} \left(\Lambda^{\frac{s}{2}} |\theta|^{\frac{p}{2}}\right)^2 {\mathrm{d}x}.$$
\end{lemma}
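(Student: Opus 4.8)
The plan is to turn the estimate into a one‑variable elementary inequality by means of the heat‑semigroup subordination formula, which represents $\Lambda^s$ as a positive average of the symmetric Markov operators $e^{t\Delta}$. Throughout, $p\ge2$ (the range in which the lemma is applied). The two endpoints are handled by inspection: for $s=0$ one has $\Lambda^0=\mathrm{Id}$, so the two sides are $\int_\Omega|\theta|^p\,{\mathrm{d}x}$ and $\tfrac2p\int_\Omega|\theta|^p\,{\mathrm{d}x}$, and $\tfrac2p\le1$; for $s=2$, integration by parts gives $\int_\Omega|\theta|^{p-2}\theta(-\Delta\theta)\,{\mathrm{d}x}=(p-1)\int_\Omega|\theta|^{p-2}|\nabla\theta|^2\,{\mathrm{d}x}$, while $\bigl|\nabla(|\theta|^{p/2})\bigr|^2=\tfrac{p^2}{4}|\theta|^{p-2}|\nabla\theta|^2$, so the claim reduces to $\tfrac{4(p-1)}{p^2}\ge\tfrac2p$, i.e. $p\ge2$. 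So from here on I assume $s\in(0,2)$.

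For $s\in(0,2)$ I would begin from
\[
\Lambda^s\theta(x)=c_s\int_0^\infty\bigl(\theta(x)-e^{t\Delta}\theta(x)\bigr)\,\frac{{\mathrm{d}t}}{t^{1+s/2}},\qquad c_s=\frac{1}{|\Gamma(-s/2)|}>0,
\]
which follows on the Fourier side from the identity $\int_0^\infty(e^{-t|k|^2}-1)\,t^{-1-s/2}\,{\mathrm{d}t}=|k|^s\Gamma(-s/2)$. Denote by $H_t$ the (nonnegative, symmetric) heat kernel on $\Omega$, so that $P_t:=e^{t\Delta}$ has $P_tf(x)=\int_\Omega H_t(x-y)f(y)\,{\mathrm{d}y}$ and $\int_\Omega H_t(x-y)\,{\mathrm{d}y}=1$. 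Using these two properties to symmetrize, one obtains for every fixed $t>0$
\[
\int_\Omega|\theta|^{p-2}\theta\,(\theta-P_t\theta)\,{\mathrm{d}x}=\tfrac12\iint_{\Omega\times\Omega}H_t(x-y)\bigl(|\theta(x)|^{p-2}\theta(x)-|\theta(y)|^{p-2}\theta(y)\bigr)\bigl(\theta(x)-\theta(y)\bigr)\,{\mathrm{d}x}\,{\mathrm{d}y},
\]
and likewise, with $g=|\theta|^{p/2}\ge0$, the Dirichlet‑form identity $\int_\Omega g\,(g-P_tg)\,{\mathrm{d}x}=\tfrac12\iint_{\Omega\times\Omega}H_t(x-y)\bigl(g(x)-g(y)\bigr)^2\,{\mathrm{d}x}\,{\mathrm{d}y}$.

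The core of the argument is then the pointwise inequality
\[
\bigl(|a|^{p-2}a-|b|^{p-2}b\bigr)(a-b)\ \ge\ \tfrac2p\bigl(|a|^{p/2}-|b|^{p/2}\bigr)^2,\qquad a,b\in\mathbb R,\ \ p\ge2.
\]
When $ab\le0$ the left side is at least $|a|^p+|b|^p$ and the right side is at most $\tfrac2p(|a|^p+|b|^p)\le|a|^p+|b|^p$; when $ab>0$ one may take $a,b>0$, and the claim becomes $(a-b)(a^{p-1}-b^{p-1})\ge\tfrac2p(a^{p/2}-b^{p/2})^2$, which I would get from Cauchy--Schwarz applied to $a^{p/2}-b^{p/2}=\tfrac p2\int_b^a\tau^{p/2-1}\,{\mathrm{d}\tau}$, yielding $(a^{p/2}-b^{p/2})^2\le\tfrac{p^2}{4(p-1)}(a-b)(a^{p-1}-b^{p-1})$ together with $\tfrac{p^2}{4(p-1)}\cdot\tfrac2p=\tfrac{p}{2(p-1)}\le1$. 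Inserting this pointwise bound with $a=\theta(x),b=\theta(y)$ into the symmetrized identity (permissible since $H_t\ge0$), undoing the symmetrizations, and then multiplying by $c_s\,t^{-1-s/2}\ge0$ and integrating in $t$, one arrives at
\[
\int_\Omega|\theta|^{p-2}\theta\,\Lambda^s\theta\,{\mathrm{d}x}\ \ge\ \tfrac2p\int_\Omega|\theta|^{p/2}\,\Lambda^s\bigl(|\theta|^{p/2}\bigr)\,{\mathrm{d}x}\ =\ \tfrac2p\int_\Omega\bigl(\Lambda^{s/2}|\theta|^{p/2}\bigr)^2\,{\mathrm{d}x},
\]
the last step being the self‑adjointness of $\Lambda^{s/2}$ (Plancherel).

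I expect the only delicate point to be rigor rather than ideas: justifying Fubini between $\int_0^\infty{\mathrm{d}t}$ and the space integrals, and ensuring every term is finite, in particular that $|\theta|^{p/2}\in H^{s/2}$. I would settle this by first proving the inequality for smooth $\theta$, where the subordination formula and all interchanges are unproblematic, and then approximating a general $\theta$ with $\theta,\Lambda^s\theta\in L^p$; since the left‑hand side is already finite by H\"older's inequality (because $|\theta|^{p-2}\theta\in L^{p/(p-1)}$ and $\Lambda^s\theta\in L^p$), the displayed chain keeps the right‑hand side finite on its own, so no circularity arises. The nonnegativity of $H_t$ and of the weight $c_s t^{-1-s/2}$ is exactly what permits pushing the pointwise inequality through both integrations; the rest is bookkeeping.
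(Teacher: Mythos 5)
Your argument is correct, but note that the paper does not prove this lemma at all: it simply recalls it as a known result of Ju \cite{Ju05} (building on C\'ordoba--C\'ordoba \cite{CC04}), so you have supplied a self-contained proof where the authors rely on a citation. Your route is the standard and sound one: the subordination formula $\Lambda^s\theta=c_s\int_0^\infty(\theta-e^{t\Delta}\theta)\,t^{-1-s/2}\,\mathrm{d}t$ (valid on the torus since the $k=0$ mode contributes nothing), the Dirichlet-form symmetrization using positivity, symmetry and mass conservation of the periodic heat kernel, and the two-point inequality $(|a|^{p-2}a-|b|^{p-2}b)(a-b)\ge\frac2p\bigl(|a|^{p/2}-|b|^{p/2}\bigr)^2$, which is exactly the Stroock--Varopoulos mechanism; your Cauchy--Schwarz verification of that inequality and your treatment of the endpoints $s=0$, $s=2$ are both right, as is the final duality step $\int g\,\Lambda^s g=\|\Lambda^{s/2}g\|_{L^2}^2$. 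Two small caveats: the lemma as stated does not restrict $p$, and your pointwise inequality genuinely needs $p\ge2$ (the constant $\frac{p}{2(p-1)}\le1$ fails for $p<2$), so you are proving the lemma only in the range where the paper uses it --- which matches \cite{Ju05}; and the closing density argument, while routine (mollify, use that $\Lambda^s$ commutes with convolution on the torus, and pass to the limit with Fatou on the right-hand side), is the one place you wave your hands, so if this were to be written out in full that approximation step deserves a couple of explicit lines.
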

We will use the following Kate-Ponce and commutator inequalities from \cite{KP88}, see also \cite{WU02, Ju05}.
\begin{lemma} \label{lem2.4}
Suppose that $f, g \in C^{\infty}_c(\Omega)$,  then
\begin{equation} \label{eqn2}
\aiminnorm{\Lambda^{s}(fg)}_{L^r} \leq
C(\aiminnorm{\Lambda^{s}f}_{L^{p_1}}\aiminnorm{g}_{L^{p_2}}+
\aiminnorm{\Lambda^{s}g}_{L^{q_1}}\aiminnorm{f}_{L^{q_2}})
\end{equation}
where $s >0$, $1 < r \leq p_1, p_2, q_1, q_2 \leq \infty$ and $1/r=1/p_1+1/{p_2}= 1/{q_1}+1/{q_2}$.
\end{lemma}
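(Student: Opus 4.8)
Inequality \eqref{eqn2} is the Kato--Ponce fractional Leibniz rule, which the paper only invokes; I therefore just outline a proof along the classical Littlewood--Paley lines, carried out on the torus $\Omega$. The plan is to split the product $fg$ by Bony's paraproduct decomposition. Fix a periodic Littlewood--Paley family $\{\Delta_j\}_{j\ge-1}$, with $\Delta_j$ projecting onto Fourier frequencies $|k|\sim 2^j$ for $j\ge0$ and $\Delta_{-1}$ onto $|k|\lesssim1$, and set $S_j=\sum_{k<j}\Delta_k$. Then
\[
fg=\sum_j S_{j-1}f\,\Delta_j g+\sum_j S_{j-1}g\,\Delta_j f+\sum_{|j-k|\le1}\Delta_j f\,\Delta_k g=:T_f g+T_g f+R(f,g),
\]
and since $R$ is symmetric in $f$ and $g$ it is enough to prove $\|\Lambda^s T_g f\|_{L^r}+\|\Lambda^s R(f,g)\|_{L^r}\lesssim\|\Lambda^s f\|_{L^{p_1}}\|g\|_{L^{p_2}}$ together with the mirror estimate $\|\Lambda^s T_f g\|_{L^r}\lesssim\|f\|_{L^{q_2}}\|\Lambda^s g\|_{L^{q_1}}$.

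Throughout I would use the square-function equivalences $\|h\|_{L^p}\simeq\|(\sum_j|\Delta_j h|^2)^{1/2}\|_{L^p}$ and $\|\Lambda^s h\|_{L^p}\simeq\|(\sum_j 2^{2js}|\Delta_j h|^2)^{1/2}\|_{L^p}$ for $1<p<\infty$, and the Hardy--Littlewood maximal inequality on $L^p$, $1<p\le\infty$; both apply here because the hypotheses force $p_i,q_i\ge r>1$. For the high--low paraproduct $T_g f=\sum_j S_{j-1}g\,\Delta_j f$ the $j$-th summand has Fourier support in the annulus $|k|\sim2^j$, so applying $\Lambda^s$ is comparable to inserting the weight $2^{js}$; thus $|\Lambda^s T_g f|\lesssim(\sup_j|S_{j-1}g|)(\sum_j 2^{2js}|\Delta_j f|^2)^{1/2}$ pointwise, and H\"older with $1/r=1/p_1+1/p_2$, the bound $\|\sup_j|S_{j-1}g|\|_{L^{p_2}}\lesssim\|g\|_{L^{p_2}}$, and the $\Lambda^s$-square function give $\|\Lambda^s T_g f\|_{L^r}\lesssim\|\Lambda^s f\|_{L^{p_1}}\|g\|_{L^{p_2}}$. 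The term $T_f g$ is handled identically with the exponents $q_1,q_2$, yielding the second summand of \eqref{eqn2}.

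The genuinely delicate piece, and the step I expect to be the main obstacle, is the high--high remainder $R(f,g)=\sum_j\Delta_j f\,\tilde\Delta_j g$, where $\tilde\Delta_j=\Delta_{j-1}+\Delta_j+\Delta_{j+1}$: here $\Delta_j f\,\tilde\Delta_j g$ has Fourier support only in a ball $|k|\lesssim2^j$, not an annulus, so the weight $2^{js}$ is not immediate and one must use $s>0$. I would bound $\|\Lambda^s R(f,g)\|_{L^r}\lesssim\sum_j 2^{js}\|\Delta_j f\,\tilde\Delta_j g\|_{L^r}$ after resumming a convergent geometric series over the output frequencies (this is precisely where $s>0$ enters), apply H\"older $\|\Delta_j f\,\tilde\Delta_j g\|_{L^r}\le\|\Delta_j f\|_{L^{p_1}}\|\tilde\Delta_j g\|_{L^{p_2}}$, and close with Cauchy--Schwarz in $j$ and the Minkowski/Fefferman--Stein vector-valued inequalities to convert $\sum_j 2^{js}\|\Delta_j f\|_{L^{p_1}}\|\tilde\Delta_j g\|_{L^{p_2}}$ into $\|\Lambda^s f\|_{L^{p_1}}\|g\|_{L^{p_2}}$. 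The bookkeeping needed to keep every H\"older pair compatible with $1/r=1/p_1+1/p_2=1/q_1+1/q_2$, to treat the admissible endpoints $p_i,q_i=\infty$ separately (there the low-frequency factor is estimated directly in $L^\infty$, with no maximal function), and to respect the exclusion $r=1$ (where the square-function equivalence breaks down) is the part that requires care.

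Finally, the inequality for general $f,g$ follows from the $C^\infty$ case by density, and on the torus the zero Fourier mode causes no trouble since $\Lambda^s$ kills constants, so the mean of $fg$ never enters the left-hand side of \eqref{eqn2}. As an alternative one could derive \eqref{eqn2} from the Coifman--Meyer bilinear multiplier theorem applied to the symbol $|j+k|^s$ after the splitting $|j+k|^s\lesssim|j|^s+|k|^s$, but the Littlewood--Paley argument above is the most self-contained.
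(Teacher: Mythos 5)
The paper does not prove this lemma at all: it is quoted from Kato--Ponce \cite{KP88} (see also \cite{WU02,Ju05}), so any argument you supply is by definition a different route. Your Bony-paraproduct outline is the standard modern proof, and the two paraproduct pieces are handled correctly: annular frequency support of $S_{j-1}g\,\Delta_j f$, the square-function equivalence for $1<r<\infty$, H\"older, and the maximal-function bound on $\sup_j|S_{j-1}g|$ all fit together as you describe.

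The remainder term, however, is not merely ``delicate'': the specific chain of inequalities you propose for it does not close. Once you take the triangle inequality over $j$ and apply H\"older termwise, you are left with the scalar sum $\sum_j 2^{js}\|\Delta_j f\|_{L^{p_1}}\|\tilde\Delta_j g\|_{L^{p_2}}$. Bounding $\|\tilde\Delta_j g\|_{L^{p_2}}\lesssim\|g\|_{L^{p_2}}$ leaves the Besov norm $\|f\|_{B^s_{p_1,1}}$, which is \emph{not} controlled by $\|\Lambda^s f\|_{L^{p_1}}$ (one only has $H^{s,p_1}\hookrightarrow B^s_{p_1,\max(p_1,2)}$, and $B^s_{p_1,1}$ carries the strictly stronger norm); Cauchy--Schwarz in $j$ fares no better, since the resulting factor $\bigl(\sum_j\|\tilde\Delta_j g\|_{L^{p_2}}^2\bigr)^{1/2}$ is a $B^0_{p_2,2}$-type norm not controlled by $\|g\|_{L^{p_2}}$ for $p_2>2$, and the constraint $1/p_1+1/p_2=1/r<1$ prevents both exponents from being $\le 2$. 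The repair is to refrain from pulling the $j$-sum outside the $L^r$ norm: use the standard fact that for $s>0$ and $u_j$ with Fourier support in balls $\{|k|\lesssim 2^j\}$ one has $\|\Lambda^s\sum_j u_j\|_{L^r}\lesssim\|(\sum_j 2^{2js}|u_j|^2)^{1/2}\|_{L^r}$ (this is where the geometric series coming from $s>0$ is actually summed), then bound the square function pointwise by $(\sup_j|\tilde\Delta_j g|)(\sum_j 2^{2js}|\Delta_j f|^2)^{1/2}$ and conclude exactly as for the paraproduct terms. With that replacement the argument is complete for $1<p_i,q_i<\infty$; the admissible endpoint in which the factor carrying $\Lambda^s$ sits in $L^\infty$ still requires a separate and genuinely harder treatment, although it is never used in this paper.
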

\begin{lemma} \label{lem2.5}
Suppose that $f, g \in C^{\infty}_c(\Omega)$,  then
\begin{equation} \label{eqn3}
\aiminnorm{\Lambda^{s}(f \cdot \nabla g)- f \cdot (\Lambda^{s}\nabla g)}_{L^2}
\leq C(\aiminnorm{\nabla f}_{L^{p_1}}\aiminnorm{\Lambda^{s} g}_{L^{p_2}}+\aiminnorm{\Lambda^{s} f}_{L^{q_1}}\aiminnorm{\nabla g}_{L^{q_2}}).
\end{equation}
where $s >0$, $2 < p_1, p_2, q_1, q_2 \leq \infty$ and $1/2=1/p_1+1/{p_2}= 1/{q_1}+1/{q_2}$.
\end{lemma}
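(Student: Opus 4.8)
The plan is to obtain Lemma~\ref{lem2.5} as a consequence of the scalar Kato--Ponce commutator estimate of \cite{KP88} rather than to reprove it from scratch; the relevant inequalities are known to persist on the torus $\Omega$ via periodic Littlewood--Paley theory (see \cite{WU02, Ju05}). First I would expand the advection term componentwise, $f\cdot\nabla g=\sum_{j}f^{(j)}\,\partial_j g$, where $f^{(j)}$ is the $j$-th component of the vector field $f$, and observe that $f\cdot(\Lambda^{s}\nabla g)=\sum_j f^{(j)}\,\Lambda^{s}(\partial_j g)$ since $\Lambda^{s}$ acts componentwise. By linearity and the triangle inequality it then suffices to bound each scalar commutator $\Lambda^{s}(f^{(j)}\,\partial_j g)-f^{(j)}\,\Lambda^{s}(\partial_j g)$ in $L^2$, to which I would apply
\[
\aiminnorm{\Lambda^{s}(FG)-F\,\Lambda^{s}G}_{L^2}\le C\big(\aiminnorm{\nabla F}_{L^{p_1}}\aiminnorm{\Lambda^{s-1}G}_{L^{p_2}}+\aiminnorm{\Lambda^{s}F}_{L^{q_1}}\aiminnorm{G}_{L^{q_2}}\big)
\]
with $F=f^{(j)}$ and $G=\partial_j g$.

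The second step is to absorb the extra derivative so that only $\Lambda^{s}g$ and $\nabla g$ appear on the right, as in \eqref{eqn3}. Here I would write $\Lambda^{s-1}\partial_j=\Lambda^{s}\circ(\Lambda^{-1}\partial_j)$ and note that $\Lambda^{-1}\partial_j$ is a Fourier multiplier of Riesz-transform type, hence bounded on $L^{p_2}(\Omega)$ for $1<p_2<\infty$, which yields $\aiminnorm{\Lambda^{s-1}\partial_j g}_{L^{p_2}}\le C\aiminnorm{\Lambda^{s}g}_{L^{p_2}}$; the estimates $\aiminnorm{\partial_j g}_{L^{q_2}}\le\aiminnorm{\nabla g}_{L^{q_2}}$, $\aiminnorm{\nabla f^{(j)}}_{L^{p_1}}\le\aiminnorm{\nabla f}_{L^{p_1}}$ and $\aiminnorm{\Lambda^{s}f^{(j)}}_{L^{q_1}}\le\aiminnorm{\Lambda^{s}f}_{L^{q_1}}$ are trivial. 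Summing the two terms ($j=1,2$) and absorbing the resulting constant then gives \eqref{eqn3}.

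The main obstacle I anticipate is the endpoint $p_2=\infty$ (and symmetrically $q_1=\infty$): there the Riesz-type multiplier $\Lambda^{-1}\partial_j$ is not bounded on $L^\infty$, so the derivative-absorption step of the previous paragraph fails. To cover it cleanly I would give the self-contained Littlewood--Paley argument: decompose by Bony's paraproduct $f\,\partial_j g=T_f(\partial_j g)+T_{\partial_j g}f+R(f,\partial_j g)$ and likewise $f\,\Lambda^{s}\partial_j g=T_f(\Lambda^{s}\partial_j g)+T_{\Lambda^{s}\partial_j g}f+R(f,\Lambda^{s}\partial_j g)$, so that the commutator splits into the paraproduct commutator $[\Lambda^{s},T_f](\partial_j g)$ plus the ``$f$-high-frequency'' remainders $\Lambda^{s}\big(T_{\partial_j g}f+R(f,\partial_j g)\big)$ together with their $\Lambda^{s}\nabla g$ counterparts. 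The paraproduct commutator obeys $\aiminnorm{[\Lambda^{s},T_f]h}_{L^2}\lesssim\aiminnorm{\nabla f}_{L^{p_1}}\aiminnorm{\Lambda^{s-1}h}_{L^{p_2}}$ by the Coifman--Meyer symbol calculus, where $\Lambda^{s-1}\partial_j$ now acts only on single frequency-localized blocks, so no global $L^\infty$ multiplier bound is used; the remaining pieces are summed as geometric Littlewood--Paley series and controlled by $\aiminnorm{\Lambda^{s}f}_{L^{q_1}}\aiminnorm{\nabla g}_{L^{q_2}}$. I expect the genuinely delicate point to be the bookkeeping in this splitting---tracking which factor plays the low-frequency role in each paraproduct so that exactly the two terms on the right of \eqref{eqn3} emerge---rather than any individual estimate; and since every application of this lemma in the present paper uses finite intermediate exponents, in practice I would record the reduction to \cite{KP88} above and defer the paraproduct computation to \cite{WU02, Ju05}.
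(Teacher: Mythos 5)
The paper offers no proof of this lemma at all---it is quoted verbatim from \cite{KP88} (see also \cite{WU02, Ju05})---and your reduction of the advection-form commutator to the scalar Kato--Ponce estimate (componentwise expansion, then absorbing $\Lambda^{s-1}\partial_j$ into $\Lambda^{s}$ via $L^{p}$-boundedness of the Riesz-type multiplier $\Lambda^{-1}\partial_j$ on mean-zero periodic functions) is the standard and correct way to pass between the two formulations, so your treatment is consistent with, and indeed more detailed than, the paper's. The only remark worth adding is that the endpoint difficulty you flag is vacuous here: the stated constraints $2<p_1,p_2,q_1,q_2$ together with $1/p_1+1/p_2=1/q_1+1/q_2=1/2$ already force all four exponents to lie in $(2,\infty)$, so the paraproduct argument you sketch for $p_2=\infty$ is never needed.
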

\begin{remark}
We remark that the inequalities \eqref{eqn2} and \eqref{eqn3} in Lemmas \ref{lem2.4} and \ref{lem2.5} are also valid for those $f$
and $g$ belonging to certain Sobolev spaces which make the right-hand sides of \eqref{eqn2} and \eqref{eqn3} to be finite.
\end{remark}

\subsection{The main results}
We now state the result about the existence of weak solution and global strong solutions of the 2D Boussinesq system \eqref{eq1.1.1}. The proof involves the standard Galerkin approximation, some basic functional analysis theorems in \cite{Tem84}, the uniform estimates in Section~\ref{sec3}, and continuity estimates in Section~\ref{sec4} .
\begin{theorem} \label{thm2.0}
Let $$H_1 = \left\{\theta \in L^2(\Omega): \int_{\Omega} \theta {\mathrm{d}x} =0 \right\}$$ and $$ H_2 = \left\{\boldsymbol u \in L^2(\Omega)^2: \nabla \cdot \boldsymbol u =0,
\int_{\Omega} u_1 {\mathrm{d}x} =\int_{\Omega} u_2 {\mathrm{d}x}=0 \right\}.$$ Suppose $f \in H^{-\beta}$ and $(\theta_0, \boldsymbol u_0) \in H_1 \times H_2$. Then, for any $T > 0$, there exists at least one weak solution of the 2D Boussinesq equations \eqref{eq1.1.1} in the following sense:
\[
\frac{\mathrm{d}}{\mathrm{d}t} \int_{\Omega} \boldsymbol u \boldsymbol \varphi {\mathrm{d}x} - \int_{\Omega} \boldsymbol u(\boldsymbol u \cdot \nabla \boldsymbol \varphi) {\mathrm{d}x} + \nu \int_{\Omega} (\Lambda^{\alpha} \boldsymbol u)(\Lambda^{\alpha} \boldsymbol \varphi) {\mathrm{d}x} = \int_{\Omega} \theta \boldsymbol e_2 \boldsymbol u {\mathrm{d}x}, \qquad \forall \boldsymbol \varphi=(\varphi_1, \varphi_2) \in C^{\infty} (\Omega)^2,
\]
and
\[
\frac{\mathrm{d}}{\mathrm{d}t} \int_{\Omega} \theta \psi {\mathrm{d}x} - \int_{\Omega} \theta(\boldsymbol u \cdot \nabla \psi) {\mathrm{d}x} + \kappa \int_{\Omega} (\Lambda^{\beta} \theta)(\Lambda^{\beta}\psi) {\mathrm{d}x} = \int f \psi {\mathrm{d}x}, \qquad \forall \psi \in C^{\infty} (\Omega).
\]
Moreover, $\theta \in L^{\infty}(0,T; H_1) \cap L^{2}(0,T; H^{\beta})$ and $\boldsymbol u \in L^{\infty}(0,T; H_2) \cap L^{2}(0,T; H^{\alpha})$.

Furthermore, if we assume that $s_1$, $s_2$ satisfy \eqref{cond1} and \eqref{cond2}, $(\theta_0, \boldsymbol u_0)\in H^{s_1} \times H^{s_2}$ and $f \in H^{s_1-\beta} \cap L^p$ for $p > 2$, then for any $T>0$,
the Boussinesq system \eqref{eq1.1.1} has a unique strong solution $(\boldsymbol u,  \theta)$ satisfying
\begin{equation}\begin{split}
	(\theta, \boldsymbol u) &\in \,\mathcal C([0, T], H^{s_1})\cap \mathcal C([0, T], H^{s_2}),\\
	(\theta_t, \boldsymbol u_t) &\in L^2(0, T; H^{s_1-\beta})\cap L^2(0, T; H^{s_2-\alpha}).\\
\end{split}\end{equation}
\end{theorem}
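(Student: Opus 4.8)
The plan is to obtain both assertions of Theorem~\ref{thm2.0} by the classical Faedo--Galerkin method, feeding in the uniform estimates established in Section~\ref{sec3} and the continuity estimates of Section~\ref{sec4}. First I would fix $m$, let $P_m$ be the orthogonal projection onto the span of the first $m$ zero-mean (and, for the velocity, divergence-free) trigonometric polynomials, and look for $(\theta_m,\boldsymbol u_m)=P_m(\theta_m,\boldsymbol u_m)$ solving the projected system; the pressure drops out under $P_m$, so this is a locally solvable system of ODEs, made global by the a priori bounds that follow. Testing the $\theta$-equation with $\theta_m$ and the momentum equation with $\boldsymbol u_m$, the transport terms and the pressure vanish by incompressibility, and Young's inequality together with the Poincar\'e inequality (functions have zero mean on $\Omega$) give
\[
\frac{\mathrm{d}}{\mathrm{d}t}\aiminnorm{\theta_m}_{L^2}^2 + \kappa\aiminnorm{\Lambda^\beta\theta_m}_{L^2}^2 \le C\aiminnorm{f}_{H^{-\beta}}^2,
\qquad
\frac{\mathrm{d}}{\mathrm{d}t}\aiminnorm{\boldsymbol u_m}_{L^2}^2 + \nu\aiminnorm{\Lambda^\alpha\boldsymbol u_m}_{L^2}^2 \le C\aiminnorm{\theta_m}_{L^2}^2 .
\]
By Gronwall's Lemma~\ref{lemma2.0.1} this produces uniform bounds for $\theta_m$ in $L^\infty(0,T;H_1)\cap L^2(0,T;H^\beta)$ and for $\boldsymbol u_m$ in $L^\infty(0,T;H_2)\cap L^2(0,T;H^\alpha)$; reading the time derivatives off the equations one bounds $(\partial_t\theta_m,\partial_t\boldsymbol u_m)$ in $L^2(0,T;X)$ for a suitable negative-order space $X$, so the Aubin--Lions lemma yields a subsequence converging weakly-$*$ in the energy spaces and strongly in $L^2(0,T;L^2(\Omega))$, which suffices to pass to the limit in the quadratic nonlinearities and obtain a weak solution with the stated regularity.

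For the strong solution I would run the same scheme but propagate the higher norms. Testing the $\theta$-equation with $\Lambda^{2s_1}\theta_m$ and the momentum equation with $\Lambda^{2s_2}\boldsymbol u_m$, and estimating the nonlinear contributions by the Kato--Ponce and commutator inequalities (Lemmas~\ref{lem2.4}--\ref{lem2.5}), the interpolation inequality (Lemma~\ref{lemma2.0}), and the improved positivity Lemma~\ref{lemma 2.3} --- this is exactly the computation of Section~\ref{sec3}, where hypotheses \eqref{cond1} and \eqref{cond2} enter decisively --- one gets, uniformly in $m$ and in $t$,
\[
\theta_m \in L^\infty(0,T;H^{s_1})\cap L^2(0,T;H^{s_1+\beta}),\qquad
\boldsymbol u_m \in L^\infty(0,T;H^{s_2})\cap L^2(0,T;H^{s_2+\alpha}).
\]
Then I would read the time derivatives off the equations once more: $\partial_t\theta_m$ is bounded in $L^2(0,T;H^{s_1-\beta})$, since $\kappa\Lambda^{2\beta}\theta_m\in L^2(0,T;H^{s_1-\beta})$, $f\in H^{s_1-\beta}$ by hypothesis, and $\boldsymbol u_m\cdot\nabla\theta_m=\nabla\cdot(\boldsymbol u_m\theta_m)$ is controlled by Lemma~\ref{lem2.4}; similarly $\partial_t\boldsymbol u_m$ is bounded in $L^2(0,T;H^{s_2-\alpha})$, the buoyancy term $\theta_m\boldsymbol e_2$ lying in $L^2(0,T;H^{s_1+\beta})\subset L^2(0,T;H^{s_2-\alpha})$ precisely because $s_2-s_1\le\alpha+\beta$ by \eqref{cond2}. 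Passing to the limit and invoking the standard interpolation theorem for evolution triples (with $H^{s_1+\beta}\hookrightarrow H^{s_1}\hookrightarrow H^{s_1-\beta}$ and $H^{s_2+\alpha}\hookrightarrow H^{s_2}\hookrightarrow H^{s_2-\alpha}$, see \cite{Tem84}) gives $\theta\in\mathcal C([0,T];H^{s_1})$, $\boldsymbol u\in\mathcal C([0,T];H^{s_2})$, and the asserted membership of $(\theta_t,\boldsymbol u_t)$; the continuity in $t$ and in the data of the solution map $S(t)$ is then the subject of Section~\ref{sec4}.

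For uniqueness, suppose $(\theta_i,\boldsymbol u_i)$, $i=1,2$, are strong solutions with the same data and put $\tilde\theta=\theta_1-\theta_2$, $\tilde{\boldsymbol u}=\boldsymbol u_1-\boldsymbol u_2$. Subtracting the two systems, testing with $(\tilde\theta,\tilde{\boldsymbol u})$, and using $\nabla\cdot\tilde{\boldsymbol u}=0$ to annihilate $\int(\boldsymbol u_1\cdot\nabla\tilde\theta)\tilde\theta$, $\int(\boldsymbol u_1\cdot\nabla\tilde{\boldsymbol u})\cdot\tilde{\boldsymbol u}$ and the pressure, leaves only $\int(\tilde{\boldsymbol u}\cdot\nabla\boldsymbol u_2)\cdot\tilde{\boldsymbol u}$, $\int(\tilde{\boldsymbol u}\cdot\nabla\theta_2)\tilde\theta$ and $\int\tilde\theta\,\boldsymbol e_2\cdot\tilde{\boldsymbol u}$. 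Here the hypothesis $\alpha,\beta>\tfrac12$ is essential: it guarantees the two-dimensional Sobolev embeddings $H^\alpha(\Omega)\hookrightarrow L^4(\Omega)$ and $H^\beta(\Omega)\hookrightarrow L^4(\Omega)$, so by H\"older and Lemma~\ref{lemma2.0} each of these terms is bounded, for every $\varepsilon>0$, by $\varepsilon\bigl(\aiminnorm{\Lambda^\alpha\tilde{\boldsymbol u}}_{L^2}^2+\aiminnorm{\Lambda^\beta\tilde\theta}_{L^2}^2\bigr)+C_\varepsilon\,\phi(t)\bigl(\aiminnorm{\tilde{\boldsymbol u}}_{L^2}^2+\aiminnorm{\tilde\theta}_{L^2}^2\bigr)$ with $\phi\in L^1(0,T)$ (a fixed power of $\aiminnorm{\boldsymbol u_2}_{H^{s_2}}+\aiminnorm{\theta_2}_{H^{s_1}}$, which is bounded on $[0,T]$ because $s_2\ge1$). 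Choosing $\varepsilon$ small to absorb the dissipation and applying Lemma~\ref{lemma2.0.1} with vanishing initial datum forces $\tilde\theta\equiv0$ and $\tilde{\boldsymbol u}\equiv0$. The step I expect to be the main obstacle is not any of the above in isolation but the propagation of the $H^{s_1}$-norm of $\theta$ carried out in Section~\ref{sec3}: since $\beta<1$ the smoothing of $\kappa\Lambda^{2\beta}\theta$ is weak, so one must exploit the divergence/commutator structure of $\boldsymbol u\cdot\nabla\theta$, and it is only the sharp balances \eqref{cond1}--\eqref{cond2} between $s_1$, $s_2$, $\alpha$, $\beta$ that let the $H^{s_1}\times H^{s_2}$ estimate close.
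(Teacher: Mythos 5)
Your existence argument (Galerkin projection, the $L^2$ energy inequalities, Aubin--Lions for the weak solution, and then the $H^{s_1}\times H^{s_2}$ a priori estimates of Section~\ref{sec3} together with the time-derivative bounds of Section~\ref{sec4.1} for the strong solution) is exactly the route the paper takes: the authors give no separate proof of Theorem~\ref{thm2.0} beyond pointing to the standard Galerkin scheme, \cite{Tem84}, and Sections~\ref{sec3}--\ref{sec4}, so this part of your proposal matches theirs. Where you genuinely diverge is uniqueness: the paper (second Proposition of Section~\ref{sec4.2}) estimates the difference $(\boldsymbol\zeta,\eta)$ at the level $H^{s_2}\times H^{s_1}$, using the Kato--Ponce and commutator Lemmas~\ref{lem2.4}--\ref{lem2.5} and a Gronwall weight $\aiminnorm{\Lambda^{s_2+\alpha}\boldsymbol u_i}_{L^2}^2+\aiminnorm{\Lambda^{s_1+\beta}\theta_2}_{L^2}^2\in L^1(0,T)$, which simultaneously yields continuity of $S(t)$ on the phase space; you instead propose a lighter $L^2$-level contraction, which, if it closed, would be a more elementary way to get uniqueness (though not the phase-space continuity, which you would still need Section~\ref{sec4.2} for).

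There is, however, a gap in your $L^2$-level uniqueness estimate. For the term $\int(\tilde{\boldsymbol u}\cdot\nabla\theta_2)\,\tilde\theta\,\mathrm{d}x$ you claim a bound $\varepsilon(\dots)+C_\varepsilon\phi(t)(\dots)$ with $\phi$ a fixed power of $\aiminnorm{\boldsymbol u_2}_{H^{s_2}}+\aiminnorm{\theta_2}_{H^{s_1}}$, bounded on $[0,T]$. But condition \eqref{cond1} only requires $s_1>2\max\{1-\alpha,1-\beta\}$, which permits $s_1<1$ (e.g.\ $\alpha=\beta=0.9$, $s_1=0.3$), and then $\aiminnorm{\theta_2}_{H^{s_1}}$ does not control $\nabla\theta_2$ in any Lebesgue space, so the H\"older step fails for this term as written. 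The fix is either to use the divergence structure, writing $\tilde{\boldsymbol u}\cdot\nabla\theta_2=\nabla\cdot(\tilde{\boldsymbol u}\,\theta_2)$ and pairing $\Lambda^{1-\beta}(\tilde{\boldsymbol u}\,\theta_2)$ against $\Lambda^{\beta}\tilde\theta$ via Lemma~\ref{lem2.4} (which works because $1-\beta<\alpha$), or to take the Gronwall weight to be $\aiminnorm{\Lambda^{s_1+\beta}\theta_2}_{L^2}^2$, which is only $L^1$ in time but that suffices for Lemma~\ref{lemma2.0.1}; note $s_1+\beta>2-\beta>1$ so this does control $\nabla\theta_2$. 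This is essentially why the paper carries the weight $\aiminnorm{\Lambda^{s_1+\beta}\theta_2}_{L^2}^2$ in its own Gronwall argument. The rest of your uniqueness sketch (the terms $\int(\tilde{\boldsymbol u}\cdot\nabla\boldsymbol u_2)\cdot\tilde{\boldsymbol u}$ and $\int\tilde\theta\,\boldsymbol e_2\cdot\tilde{\boldsymbol u}$, using $H^{1/2}(\Omega)\hookrightarrow L^4(\Omega)$ and $s_2\geq1$) is fine.
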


The main goal here is to prove the existence of the global attractor for the Boussinesq system \eqref{eq1.1.1} and we have the following theorem.
\begin{theorem} \label{thm2.1}
Assume that $\nu >0$, $\kappa >0$, $s_1$ and $s_2$ satisfy \eqref{cond1} and \eqref{cond2},  and $f \in H^{s_1-\beta} \cap L^p$ for $p \geq 2$. Then the solution operator $\{S(t)\}_{t\geq 0}$ of the 2D Boussinesq system: $S(t)(\theta_0, \boldsymbol u_0)=(\theta(t),\boldsymbol u(t))$ defines a semigroup in the space $H^{s_1} \times H^{s_2}$ for all $t\in\mathbb R_+$. Moreover, the following statements are valid:
	\begin{enumerate}
		\item for any $(\theta_0, \boldsymbol u_0)\in H^{s_1} \times H^{s_2}$, $t\mapsto S(t)(\theta_0, \boldsymbol u_0)$ is a continuous function from $\mathbb R_+$ into $H^{s_1} \times H^{s_2}$;
		\item for any fixed $t>0$, $S(t)$ is a continuous and compact map in $H^{s_1} \times H^{s_2}$;
		\item $\{S(t)\}_{t\geq 0}$ possesses a global attractor $\mathcal A$ in the space $H^{s_1} \times H^{s_2}$. The global attractor $\mathcal A$ is compact and connected in $H^{s_1} \times H^{s_2}$ and is the maximal bounded attractor and the minimal invariant set in $H^{s_1} \times H^{s_2}$ in the sense of the set inclusion relation.		
		\end{enumerate}
\end{theorem}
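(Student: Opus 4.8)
The plan is to reduce everything to the abstract theory of global attractors for semigroups, e.g.\ as in \cite{Tem88}: once one knows that $\{S(t)\}_{t\ge 0}$ is a suitably continuous semigroup on $X := H^{s_1}\times H^{s_2}$ which possesses a bounded absorbing set and is asymptotically compact — here in fact $S(t)$ will be compact for every $t>0$ — the existence of a compact connected global attractor $\mathcal A$ that is the maximal bounded attractor and the minimal invariant set is automatic. So the work splits into (a) the semigroup and continuity properties, and (b) a uniform absorbing ball together with a compactness mechanism.

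For (a), I would invoke Theorem~\ref{thm2.0}: global existence and uniqueness of strong solutions makes $S(t)(\theta_0,\boldsymbol u_0) := (\theta(t),\boldsymbol u(t))$ a well-defined semigroup on $X$, and part (1) — continuity of $t\mapsto S(t)(\theta_0,\boldsymbol u_0)$ into $X$ — is precisely the regularity $(\theta,\boldsymbol u)\in \mathcal C([0,T],H^{s_1})\cap\mathcal C([0,T],H^{s_2})$ proved in Section~\ref{sec4}. Continuity of $S(t)$ in the data for fixed $t$ (the ``continuous'' half of part (2)) follows by writing the system satisfied by the difference of two solutions, estimating the nonlinear and coupling terms with the Kato--Ponce and commutator inequalities (Lemmas~\ref{lem2.4} and~\ref{lem2.5}) and the interpolation inequality (Lemma~\ref{lemma2.0}), and closing with the Gronwall lemma (Lemma~\ref{lemma2.0.1}) using the a priori bounds of Section~\ref{sec3}.

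For (b), I would proceed in the order of Section~\ref{sec3}. First, apply the improved positivity lemma (Lemma~\ref{lemma 2.3}) to $|\theta|^{p-2}\theta$ in the $\theta$-equation to obtain the maximum-principle-type uniform $L^p$ estimates on $\theta$, which are absorbing. Feeding the resulting control of the buoyancy force $\theta\boldsymbol e_2$ into the velocity equation yields a differential inequality of the form $\tfrac{\mathrm d}{\mathrm dt}\|\boldsymbol u\|_{L^p}^{p} + \lambda \|\boldsymbol u\|_{L^p}^{p} \le g(t)$ with $\int_t^{t+1}g\le a_1$, to which the variant uniform Gronwall lemma (Lemma~\ref{lemma 2.2}) applies, giving uniform $L^p$ (hence, since $s_2\ge 1$, $H^1$-absorbing) bounds for $\boldsymbol u$. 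One then bootstraps: apply $\Lambda^{s}$ to each equation, control $\Lambda^s(\boldsymbol u\cdot\nabla\theta)$ and $\Lambda^s(\boldsymbol u\cdot\nabla\boldsymbol u)$ via Lemmas~\ref{lem2.4} and~\ref{lem2.5}, interpolate with Lemma~\ref{lemma2.0}, and apply either the standard or the variant uniform Gronwall lemma to raise the uniform bounds to $H^{s_1}\times H^{s_2}$, producing a bounded absorbing set $\mathcal B_0\subset X$. Section~\ref{sec3.4} then upgrades $\mathcal B_0$ to a \emph{compact} absorbing set — typically by showing $S(t)\mathcal B_0$ is bounded in a slightly more regular space $H^{s_1+\epsilon}\times H^{s_2+\epsilon}$, which embeds compactly into $X$ by Rellich's theorem on the periodic box — which simultaneously yields the compactness of $S(t)$ in part (2) and the asymptotic compactness required by the abstract theorem. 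Assembling (a) and (b) and quoting \cite[Chapter~I]{Tem88} finishes part (3).

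The hard part is the uniform-in-time bound for $\boldsymbol u$, which is exactly where this problem is harder than the subcritical SQG case of \cite{Ju05}: there $\boldsymbol u$ is a Riesz transform of $\theta$ and inherits its bounds directly, whereas here $\boldsymbol u$ obeys its own dissipative evolution equation with a buoyancy source, so one must exploit the linear damping $\nu(-\Delta)^\alpha\boldsymbol u$ through Lemma~\ref{lemma 2.2} rather than the classical uniform Gronwall lemma. Relatedly, closing the $H^{s_1}\times H^{s_2}$ bootstrap forces the structural conditions \eqref{cond1} and~\eqref{cond2}: \eqref{cond1} guarantees enough smoothing in the $\theta$-equation relative to $\alpha,\beta$, while \eqref{cond2} keeps the gap $s_2-s_1$ small enough that the commutator terms and the coupling term $\theta\boldsymbol e_2$ can be absorbed into the dissipation after interpolation.
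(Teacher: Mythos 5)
Your overall architecture --- the abstract attractor theorem from \cite{Tem88}, the maximum principle for $\theta$ via the positivity lemma, the variant uniform Gronwall lemma for the velocity, the $H^s$ bootstrap closed by the uniform Gronwall lemma, the compact embedding into $H^{s_1}\times H^{s_2}$, and the difference estimates with Kato--Ponce/commutator bounds for continuity in the data --- coincides with the paper's. However, there is one genuine gap, and it sits exactly at the step you yourself flag as the hard part. You propose to obtain the uniform $L^p$ bound for $\boldsymbol u$ by deriving a differential inequality of the form $\frac{\mathrm d}{\mathrm dt}\|\boldsymbol u\|_{L^p}^p+\lambda\|\boldsymbol u\|_{L^p}^p\le g(t)$ directly from the velocity equation. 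For $p>2$ this fails: testing against $|\boldsymbol u|^{p-2}\boldsymbol u$ leaves the pressure term $\int_\Omega \nabla\pi\cdot|\boldsymbol u|^{p-2}\boldsymbol u\,\mathrm{d}x$, which does not vanish (divergence-freeness only kills it for $p=2$) and cannot be controlled; this is precisely the obstruction the paper points out at the start of Section~\ref{sec3.3}. Moreover your parenthetical ``uniform $L^p$ (hence, since $s_2\ge1$, $H^1$-absorbing) bounds'' runs the implication backwards: $L^p$ control does not yield $H^1$ control; in two dimensions it is the uniform $H^1$ bound that yields all the $L^p$ bounds by Sobolev embedding.

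The missing idea is to pass to the vorticity $\omega$ of $\boldsymbol u$, whose equation \eqref{eq3.3.1} contains no pressure and has source $\partial_1\theta$. Testing against $\omega$ gives
\[
\frac{\mathrm d}{\mathrm dt}\|\omega\|_{L^2}^2+\nu\lambda_1^{2\alpha}\|\omega\|_{L^2}^2\le \frac{1}{\nu}\|\Lambda^{1-\alpha}\theta\|_{L^2}^2,
\]
and since $\alpha,\beta>\tfrac12$ forces $1-\alpha\le\beta$, the time averages $\int_t^{t+1}\|\Lambda^{1-\alpha}\theta\|_{L^2}^2\,\mathrm{d}s$ are uniformly bounded by Proposition~\ref{prop3.1}; Lemma~\ref{lemma 2.2} is then applied to $y=\|\omega\|_{L^2}^2$ (not to $\|\boldsymbol u\|_{L^p}^p$), giving the uniform $L^2$ bound on $\omega$, hence the $H^1$ bound on $\boldsymbol u$ by the Biot--Savart law, and finally $\|\boldsymbol u\|_{L^p}\le C(p)$ for all $p<\infty$ by embedding, as in \eqref{eq3.4.2}. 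With this substitution the rest of your outline goes through as in the paper.
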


\subsection{The global attractor}
In order to prove the main result Theorem \ref{thm2.1}, we are going to utilize the abstract result from \cite[Chapter I]{Tem88} about semigroups and the existence of their global attractors. One can also refer to
\cite{Lad91}, \cite{BV92}, \cite{HK06}
for additional development of the theory of global attractor.
\begin{theorem}\label{thm4.2}
Suppose that $X$ is a metric space with metric $d(\cdot,\cdot)$ and the semigroup $\{S(t) \}_{t\geq 0}$ is a family of operators from $X$ into $X$ itself such that:
\begin{enumerate}[$\quad(i)$]
\item for each fixed $t>0$, $S(t)$ is continuous from $X$ into itself;
\item for some $t_0>0$, $S(t_0)$ is compact from $X$ into itself;
\item there exists a subset $B_0$ of $X$ which is bounded, and a subset $U$ of $X$ which is open, such that $B_0\subset U\subset X$, and $B_0$ is absorbing in $U$ for the semigroup, that is for any bounded subset $B\subset U$, there exists $t_0=t_0(B)>0$ such that
\[
S(t)B\subset B_0,\quad\quad \forall\,t>t_0(B).
\]
($B_0$ is also called the absorbing set of the semigroup in $U$).
\end{enumerate}
Then $\mathcal A:=\omega(B_0)$, the $\omega$-limit set of $B_0$, is a compact attractor which attracts all the bounded sets of $U$, that is for any bounded set  $B\subset U$,
\[
	\lim_{t\rightarrow+\infty}\text{dist}(S(t)B,\mathcal A) = 0,
\]
where $\text{dist}(B_1,B_2):=\sup_{x\in B_1}\inf_{y\in B_2}d(x,y)$ is the non-symmetric Hausdorff distance between subsets of $X$. Furthermore, the set $\mathcal A$ is the maximal bounded attractor in $U$ for the inclusion relation and the minimal invariant
set in the sense of $S(t)\mathcal A= \mathcal A$, $\forall t \geq 0$.

Suppose in addition that $X$ is a Banach space, $U$ is convex and
\begin{enumerate}[$\quad(i)$]
\setcounter{enumi}{3}
\item for any $x\in X$, $t\mapsto S(t)x$ from $\mathbb R_+$ to $X$ is continuous.
\end{enumerate}
Then $\mathcal A=\omega(B_0)$ is also connected.

If $U=X$, $\mathcal A$ is the global attractor of the semigroup $\{S(t) \}_{t\geq 0}$ in $X$.
\end{theorem}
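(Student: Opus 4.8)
The plan is to realize the attractor as the $\omega$-limit set of the given bounded absorbing set $B_0$, namely
\[
\mathcal A := \omega(B_0) = \bigcap_{s\geq 0}\overline{\bigcup_{t\geq s}S(t)B_0},
\]
which has the equivalent sequential description: $y\in\mathcal A$ if and only if $y=\lim_k S(t_k)x_k$ for some $t_k\to+\infty$ and $x_k\in B_0$. With this object fixed, I would prove, in this order: (1) $\mathcal A$ is nonempty and compact; (2) $S(t)\mathcal A=\mathcal A$ for all $t\geq 0$; (3) $\mathcal A$ attracts every bounded $B\subset U$; (4) the maximality and minimality assertions; (5) connectedness under the extra hypotheses; (6) the global-attractor statement when $U=X$.

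For (1), I would apply hypothesis (iii) to $B_0$ itself to get $t_1>0$ with $S(t)B_0\subset B_0$ for all $t\geq t_1$; then for $t\geq t_0+t_1$ one has $S(t)B_0=S(t_0)\,S(t-t_0)B_0\subset S(t_0)B_0\subset\overline{S(t_0)B_0}=:K$, and $K$ is compact by hypothesis (ii) since $B_0$ is bounded. Hence the sets $\mathcal K_s:=\overline{\bigcup_{t\geq s}S(t)B_0}$ are, for $s\geq t_0+t_1$, nonempty closed subsets of the compact $K$, so they are compact, and they decrease with $s$; therefore $\mathcal A=\bigcap_s\mathcal K_s$ is nonempty and compact. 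For (2), $S(t)\mathcal A\subset\mathcal A$ is immediate from the sequential description together with the semigroup law and the continuity of $S(t)$ (hypothesis (i)): $S(t)\bigl(\lim_k S(t_k)x_k\bigr)=\lim_k S(t+t_k)x_k\in\mathcal A$. For the reverse inclusion, given $y=\lim_k S(t_k)x_k\in\mathcal A$ I would set $z_k:=S(t_k-t)x_k$ for large $k$; these lie eventually in the compact $K$, so $z_{k_j}\to z$ along a subsequence, $z\in\mathcal A$ because $t_{k_j}-t\to+\infty$, and $S(t)z=\lim_j S(t_{k_j})x_{k_j}=y$, whence $y\in S(t)\mathcal A$.

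Step (3) is a contradiction argument: if $\mathrm{dist}(S(t)B,\mathcal A)\not\to 0$ for some bounded $B\subset U$, choose $\varepsilon>0$, $t_k\to+\infty$ and $x_k\in B$ with $\mathrm{dist}(S(t_k)x_k,\mathcal A)\geq\varepsilon$; by (iii) the points $w_k:=S(t_1(B))x_k$ lie in $B_0$, so $S(t_k)x_k=S(t_k-t_1(B))w_k$ lies eventually in $K$ and converges along a subsequence to some $y$, which by the sequential description belongs to $\omega(B_0)=\mathcal A$---contradicting $\mathrm{dist}(S(t_k)x_k,\mathcal A)\geq\varepsilon$. Step (4) then follows quickly: a bounded $Y\subset U$ with $S(t)Y=Y$ for all $t$ satisfies $\mathrm{dist}(Y,\mathcal A)=\mathrm{dist}(S(t)Y,\mathcal A)\to 0$, hence $\mathrm{dist}(Y,\mathcal A)=0$, so $Y\subset\overline{\mathcal A}=\mathcal A$ (maximality); any closed set $Y$ attracting $B_0$ must contain every limit $\lim_k S(t_k)x_k$ with $x_k\in B_0$, i.e. all of $\mathcal A$ (minimality), while $\mathcal A$ itself is invariant by (2).

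For (5), assuming $X$ is a Banach space, $U$ is convex, and hypothesis (iv) holds, I would put $\mathcal C:=\overline{\mathrm{conv}}\,\mathcal A$; by Mazur's theorem $\mathcal C$ is compact, it is connected since convex, it is bounded, and $\mathcal C\subset U$ because $\mathcal A$ is compact and $U$ is open and convex (a $\delta$-buffer around $\mathcal A$ inside $U$ survives taking convex hulls and closures). One checks $\omega(\mathcal C)=\mathcal A$: the inclusion $\supseteq$ holds because $\mathcal A=S(t)\mathcal A\subset S(t)\mathcal C$ for every $t$, and $\subseteq$ follows from Step (3) applied to $\mathcal C$ with $\mathcal A$ closed. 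For $s$ large, $\bigcup_{t\geq s}S(t)\mathcal C$ is a union of connected sets $S(t)\mathcal C$ (continuous images of the connected $\mathcal C$, using the continuity of $S(t)$ and of $t\mapsto S(t)x$), each containing the nonempty connected set $\mathcal A$, so the union---and hence its closure $\mathcal{K}'_s$, compact by the argument of Step (1)---is connected; therefore $\mathcal A=\omega(\mathcal C)=\bigcap_s\mathcal{K}'_s$ is a decreasing intersection of nonempty compact connected sets, hence connected. Finally, for (6): if $U=X$ then every bounded set is absorbed by $B_0$, so Step (3) makes $\mathcal A$ attract all bounded subsets, i.e. it is the global attractor. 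The one genuinely load-bearing step is (1)---extracting from hypotheses (ii)--(iii) a single fixed compact set $K=\overline{S(t_0)B_0}$ that eventually contains the whole forward orbit of $B_0$; everything afterward is soft point-set topology plus the semigroup law. The only other mildly delicate point is locating, in (5), a bounded connected set that contains $\mathcal A$ and still lies inside $U$, which is exactly what convexity of $U$ together with compactness of $\mathcal A$ and Mazur's theorem provide.
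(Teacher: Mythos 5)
Your proof is correct, but note that the paper itself offers no proof of this statement: Theorem \ref{thm4.2} is imported verbatim as an abstract result from \cite[Chapter I]{Tem88} (Temam's Theorem I.1.1 and its corollaries), and the authors only verify its hypotheses for the Boussinesq semigroup in Sections \ref{sec3}--\ref{sec4}. Your reconstruction follows the classical Temam argument in all essentials: the load-bearing step is exactly as you identify it, namely combining (ii) and (iii) to trap the forward orbit of $B_0$ inside the single compact set $K=\overline{S(t_0)B_0}$, after which nonemptiness, compactness, invariance, attraction and the extremal properties of $\omega(B_0)$ are the standard sequential/contradiction arguments. The one place where you genuinely deviate from Temam is the connectedness step: Temam shows $\bigcup_{t\geq s}S(t)\mathcal C$ is connected as the continuous image of $[s,\infty)\times\mathcal C$ under $(t,x)\mapsto S(t)x$, which is where hypothesis (iv) enters, whereas you instead observe that each $S(t)\mathcal C$ is a connected set containing the common nonempty connected subset $\mathcal A=S(t)\mathcal A$, so the union is connected without any continuity in $t$. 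Your variant is valid (and shows (iv) is not actually needed for connectedness once invariance is in hand); the remaining delicate point, that $\overline{\mathrm{conv}}\,\mathcal A$ is compact by Mazur and sits inside the open convex $U$, you handle correctly via the identity $\mathrm{conv}(\mathcal A)+B(0,\delta)=\mathrm{conv}(\mathcal A+B(0,\delta))\subset U$.
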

\begin{remark}
We will carry out the proof of Theorem \ref{thm2.1} by checking all the items in Theorem \ref{thm4.2}.
We  first show the $H^s$-uniform estimates in Section~\ref{sec3.4}, which implies items $(ii)$, $(iii)$,  then we check item $(iv)$ in Section~\ref{sec4.1} by proving that $\{S(t)\}_{t\geq 0}$ is continuous from $\mathbb R_+$ to $H^{s_1}\times H^{s_2}$, and finally we check item $(i)$  in Section~\ref{sec4.2} by proving that $\{S(t)\}_{t\geq 0}$ is continuous in $H^{s_1}\times H^{s_2}$.
\end{remark}

\section{Uniform estimates} \label{sec3}
In the following, we denote by $C$  a positive constant, which is independent of time $t$ and of the initial data $\boldsymbol u_0$ and $\theta_0$. The constant $C$  may vary from line to line.
\subsection{$L^2$ and $L^p$-estimate for $\theta$} \label{sec3.1}
\begin{proposition}[Existence of absorbing ball in $L^2$ and $L^p$ for $\theta$]\label{prop3.1}
Under the assumptions of Theorem~\ref{thm2.1}, there exists $ t_1^{*} = t_1^{*}(\aiminnorm{\theta_0}_{L^2}) > 0$, such that
\begin{equation} \label{eq3.1.0}
\aiminnorm{\theta(t)}_{L^2} \leq C, \qquad \forall t \geq t_1^{*},
\end{equation}
and
\begin{equation} \label{eq3.1.3}
\int_{t}^{t+1} \aiminnorm{\Lambda^{\beta} \theta}_{L^2}^2 {\mathrm{d}s} \leq C, \qquad \forall t \geq t_1^{*}.
\end{equation}
\end{proposition}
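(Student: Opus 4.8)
The plan is a textbook energy argument for the temperature equation \eqref{eq1.1.1}$_3$, upgraded to $L^p$ via the improved positivity Lemma~\ref{lemma 2.3}. First I would multiply \eqref{eq1.1.1}$_3$ by $\theta$ and integrate over $\Omega$. Because $\nabla\cdot\boldsymbol u=0$ and $\Omega$ is periodic, the convective term vanishes, $\int_\Omega(\boldsymbol u\cdot\nabla\theta)\theta\,\mathrm dx=\tfrac12\int_\Omega\boldsymbol u\cdot\nabla(\theta^2)\,\mathrm dx=0$, and the dissipative term contributes $\kappa\|\Lambda^\beta\theta\|_{L^2}^2$; bounding the forcing by duality, $\bigl|\int_\Omega f\theta\,\mathrm dx\bigr|\le C\|f\|_{H^{-\beta}}\|\Lambda^\beta\theta\|_{L^2}\le\tfrac{\kappa}{2}\|\Lambda^\beta\theta\|_{L^2}^2+C\|f\|_{H^{-\beta}}^2$, one gets
\[
\frac{\mathrm d}{\mathrm dt}\|\theta\|_{L^2}^2+\kappa\|\Lambda^\beta\theta\|_{L^2}^2\le C\|f\|_{H^{-\beta}}^2 .
\]
Since $\theta(t)$ stays mean-free along the flow, the Poincaré inequality gives $\|\Lambda^\beta\theta\|_{L^2}^2\ge c\|\theta\|_{L^2}^2$, so $\frac{\mathrm d}{\mathrm dt}\|\theta\|_{L^2}^2+c\kappa\|\theta\|_{L^2}^2\le C\|f\|_{H^{-\beta}}^2$, and Gronwall's Lemma~\ref{lemma2.0.1} yields $\|\theta(t)\|_{L^2}^2\le e^{-c\kappa t}\|\theta_0\|_{L^2}^2+C\|f\|_{H^{-\beta}}^2/(c\kappa)$. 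Choosing $t_1^{*}=t_1^{*}(\|\theta_0\|_{L^2})$ so large that the transient term is $\le 1$ for $t\ge t_1^{*}$ proves \eqref{eq3.1.0}.

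For \eqref{eq3.1.3} I would simply integrate the displayed differential inequality in $t$ over $[t,t+1]$, obtaining $\kappa\int_t^{t+1}\|\Lambda^\beta\theta(s)\|_{L^2}^2\,\mathrm ds\le\|\theta(t)\|_{L^2}^2+C\|f\|_{H^{-\beta}}^2$, and then invoke \eqref{eq3.1.0} to bound $\|\theta(t)\|_{L^2}$ for $t\ge t_1^{*}$. For the $L^p$ part ($p>2$), multiply \eqref{eq1.1.1}$_3$ by $|\theta|^{p-2}\theta$; the convective term again vanishes, $\int_\Omega(\boldsymbol u\cdot\nabla\theta)|\theta|^{p-2}\theta\,\mathrm dx=\tfrac1p\int_\Omega\boldsymbol u\cdot\nabla|\theta|^p\,\mathrm dx=0$. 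The dissipative term is estimated from below by Lemma~\ref{lemma 2.3} with $s=2\beta\in(1,2)\subset[0,2]$, giving $\ge\tfrac{2\kappa}{p}\|\Lambda^\beta(|\theta|^{p/2})\|_{L^2}^2$, while Hölder's inequality controls the forcing by $\|f\|_{L^p}\|\theta\|_{L^p}^{p-1}$.

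The main obstacle is that $\|\Lambda^\beta(|\theta|^{p/2})\|_{L^2}^2$ is \emph{not} coercive over $\|\theta\|_{L^p}^p=\||\theta|^{p/2}\|_{L^2}^2$, since $|\theta|^{p/2}$ is not mean-free, so one cannot directly close a Gronwall argument. I would resolve this by splitting $|\theta|^{p/2}=g_0+\overline g$ into its zero-mean part and its spatial average: Poincaré gives $\|g_0\|_{L^2}^2\le C\|\Lambda^\beta(|\theta|^{p/2})\|_{L^2}^2$, whereas the contribution $|\Omega|\,\overline g^{\,2}=|\Omega|^{-1}\|\theta\|_{L^{p/2}}^{p}$ is absorbed by interpolating $\|\theta\|_{L^{p/2}}$ between $L^2$ and $L^p$ and applying Young's inequality, with the leftover controlled by the already-established bound \eqref{eq3.1.0}. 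This produces $\|\Lambda^\beta(|\theta|^{p/2})\|_{L^2}^2\ge c\|\theta\|_{L^p}^p-C$ for $t\ge t_1^{*}$; then Young's inequality on $\|f\|_{L^p}\|\theta\|_{L^p}^{p-1}$ and Gronwall's Lemma~\ref{lemma2.0.1} close the estimate exactly as in the $L^2$ case, yielding a $p$-dependent absorbing radius. Tracking the dependence of the constants on $p$ and on $\|f\|_{L^p}$ through this absorption step is the only delicate point; the remaining manipulations are routine.
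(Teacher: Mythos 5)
Your argument is correct and follows essentially the same route as the paper: the paper simply imports the $L^2$ decay estimate \eqref{eq3.1.1} and the $L^p$ estimate \eqref{eq3.1.2} from \cite[Section 5.1]{Ju05} and then integrates the $L^2$ energy identity over $[t,t+1]$ to get \eqref{eq3.1.3}, which is exactly your energy/Poincar\'e/Gronwall computation written out in full. The one place you add something of your own is the coercivity fix in the $L^p$ step (splitting $|\theta|^{p/2}$ into its mean and mean-free parts and absorbing the mean via interpolation against the already-established $L^2$ bound); that is sound — just note that for $2<p<4$ one has $p/2<2$, so $\|\theta\|_{L^{p/2}}$ is controlled by $\|\theta\|_{L^2}$ via H\"older rather than by interpolation between $L^2$ and $L^p$ — and in any case the proposition as stated only requires the $L^2$ assertions.
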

\begin{proof}
Here and throughout this article, we let $\lambda_1$ be the first eigenvalue of $\Lambda$. Then by the results from  \cite[Section 5.1]{Ju05}, we obtain
\begin{equation}\label{eq3.1.1}
\aiminnorm{\theta(t)}_{L^2}^2 \leq e^{- \kappa \lambda_1^{2\beta} t}
\left(\aiminnorm{\theta_0}_{L^2}^2 - \frac{\aiminnorm{f}_{L^2}^2}{\kappa^2 \lambda_1^{4\beta}}\right)+ \frac{\aiminnorm{f}_{L^2}^2}{\kappa^2 \lambda_1^{4\beta}},
\end{equation}
which immediately implies the uniform bound \eqref{eq3.1.0} for some $ t_1^{*} > 0$ large enough.
Furthermore, taking inner product of \eqref{eq1.1.1}$_3$ with $\theta$ in $L^2(\Omega)$ and integrating in time give
\begin{equation}\label{eq3.1.2.2}
\aiminnorm{\theta(t+1)}_{L^2}^2 + \kappa \int_{t}^{t+1}  \aiminnorm{\Lambda^{\beta} \theta}_{L^2}^2 {\mathrm{d}s} \leq \aiminnorm{\theta(t)}_{L^2}^2+ \frac{\aiminnorm{f}_{L^2}^2}{\kappa \lambda_1^{2\beta}}.
\end{equation}
Therefore, the time average estimate \eqref{eq3.1.3} follows from
\eqref{eq3.1.0}.
We also deduce the equation (5.4) in \cite{Ju05} that for all $2 \leq p \leq \infty$, we have
\begin{equation}\label{eq3.1.2}
\aiminnorm{\theta(t)}_{L^p} \leq e^{- \frac{\kappa \lambda_1^{2\beta}}{p} t}
\left(\aiminnorm{\theta_0}_{L^p} - \frac{p \aiminnorm{f}_{L^p}}{\kappa \lambda_1^{2\beta}}\right)+ \frac{p\aiminnorm{f}_{L^p}}{\kappa \lambda_1^{2\beta}}.
\end{equation}
The above inequality gives the uniform $L^p$ estimate and absorbing ball in $L^p$ for $\theta$ whenever $\theta_0 \in L^p(\Omega)$ for all $p \in [2, \infty)$.
\end{proof}
\subsection{$L^2$-estimate for $\boldsymbol u$} \label{sec3.2}
\begin{proposition}[Existence of absorbing ball in $L^2$ for $\boldsymbol u$]\label{prop3.2}
Under the assumptions of Theorem~\ref{thm2.1}, there exists $t_2^{*}=t_2^{*}(\aiminnorm{\theta_0}_{L^2}, \aiminnorm{\boldsymbol u_0}_{L^2})  > 0$, such that
\begin{equation} \label{eq3.1.4}
\aiminnorm{\boldsymbol u(t)}_{L^2} \leq C, \qquad \forall t \geq t_2^{*},\end{equation}
and
\begin{equation} \label{eq3.1.8}
\int_{t}^{t+1} \aiminnorm{\Lambda^{\alpha} \boldsymbol u}_{L^2}^2 {\mathrm{d}s} \leq C, \qquad \forall t \geq t_2^{*}.
\end{equation}
\end{proposition}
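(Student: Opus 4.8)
The plan is a classical energy estimate for the velocity equation $\eqref{eq1.1.1}$$_1$, fed by the absorbing property of $\theta$ from Proposition~\ref{prop3.1} and closed with the variant of the uniform Gronwall lemma, Lemma~\ref{lemma 2.2}. First I would work at the level of the Galerkin approximations and take the $L^2(\Omega)$ inner product of $\eqref{eq1.1.1}$$_1$ with $\boldsymbol u$. Because $\nabla\cdot\boldsymbol u=0$, the convective term $\int_\Omega(\boldsymbol u\cdot\nabla\boldsymbol u)\cdot\boldsymbol u\,\mathrm dx$ and the pressure term $\int_\Omega\nabla\pi\cdot\boldsymbol u\,\mathrm dx$ both vanish, leaving
\[
\frac12\frac{\mathrm d}{\mathrm dt}\aiminnorm{\boldsymbol u}_{L^2}^2+\nu\aiminnorm{\Lambda^\alpha\boldsymbol u}_{L^2}^2=\int_\Omega\theta\,\boldsymbol e_2\cdot\boldsymbol u\,\mathrm dx\leq\aiminnorm{\theta}_{L^2}\aiminnorm{\boldsymbol u}_{L^2}.
\]
Since $\boldsymbol u$ has zero mean, the Poincar\'e-type inequality $\lambda_1^{2\alpha}\aiminnorm{\boldsymbol u}_{L^2}^2\leq\aiminnorm{\Lambda^\alpha\boldsymbol u}_{L^2}^2$ holds; combining it with Young's inequality (to absorb a copy of $\tfrac\nu2\aiminnorm{\Lambda^\alpha\boldsymbol u}_{L^2}^2$ on the left) and using Poincar\'e once more gives the two inequalities
\[
\frac{\mathrm d}{\mathrm dt}\aiminnorm{\boldsymbol u}_{L^2}^2+\nu\aiminnorm{\Lambda^\alpha\boldsymbol u}_{L^2}^2\leq\frac1{\nu\lambda_1^{2\alpha}}\aiminnorm{\theta}_{L^2}^2,\qquad
\frac{\mathrm d}{\mathrm dt}\aiminnorm{\boldsymbol u}_{L^2}^2+\nu\lambda_1^{2\alpha}\aiminnorm{\boldsymbol u}_{L^2}^2\leq\frac1{\nu\lambda_1^{2\alpha}}\aiminnorm{\theta}_{L^2}^2.
\]

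For \eqref{eq3.1.4}, I would apply Proposition~\ref{prop3.1}: for $t\geq t_1^*$ we have $\aiminnorm{\theta(t)}_{L^2}\leq C$, so $g(t):=\tfrac1{\nu\lambda_1^{2\alpha}}\aiminnorm{\theta(t)}_{L^2}^2$ obeys $\int_t^{t+1}g(s)\,\mathrm ds\leq a_1$ for all $t\geq t_1^*$. Then Lemma~\ref{lemma 2.2}, applied with $\lambda=\nu\lambda_1^{2\alpha}$, $y(t)=\aiminnorm{\boldsymbol u(t)}_{L^2}^2$ and $t_0=t_1^*$, produces a time $t_2^*>t_1^*$ and an absolute constant $C$ with $\aiminnorm{\boldsymbol u(t)}_{L^2}^2\leq C$ for all $t\geq t_2^*$. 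The value $t_2^*$ depends on $t_1^*=t_1^*(\aiminnorm{\theta_0}_{L^2})$ and on $\aiminnorm{\boldsymbol u(t_1^*)}_{L^2}$, the latter finite and bounded in terms of $\aiminnorm{\boldsymbol u_0}_{L^2},\aiminnorm{\theta_0}_{L^2}$ either from $\boldsymbol u\in L^\infty(0,T;H_2)$ in Theorem~\ref{thm2.0} or by integrating the second inequality above on $[0,t_1^*]$; hence $t_2^*=t_2^*(\aiminnorm{\theta_0}_{L^2},\aiminnorm{\boldsymbol u_0}_{L^2})$, while the radius $C$ is data-independent.

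For the time-averaged bound \eqref{eq3.1.8}, I would integrate the first inequality of the display above over $[t,t+1]$:
\[
\aiminnorm{\boldsymbol u(t+1)}_{L^2}^2+\nu\int_t^{t+1}\aiminnorm{\Lambda^\alpha\boldsymbol u}_{L^2}^2\,\mathrm ds\leq\aiminnorm{\boldsymbol u(t)}_{L^2}^2+\int_t^{t+1}\frac1{\nu\lambda_1^{2\alpha}}\aiminnorm{\theta}_{L^2}^2\,\mathrm ds,
\]
and for $t\geq t_2^*$ the right-hand side is bounded by an absolute constant (the first term by \eqref{eq3.1.4}, the second by Proposition~\ref{prop3.1}), which yields \eqref{eq3.1.8} after dividing by $\nu$. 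I do not expect a genuine obstacle here: this is all standard energy analysis once the cancellation of the convective and pressure terms is exploited, and the $L^2$-level of $\boldsymbol u$ is the one place where the velocity estimates stay essentially as easy as for the Navier--Stokes equations. The only point demanding care is the bootstrapping structure — the absorbing ball for $\boldsymbol u$ is obtained \emph{after} that for $\theta$, so one must first know $\theta$ has entered its absorbing ball (giving $t_1^*$) and must check that $\aiminnorm{\boldsymbol u}_{L^2}$ stays finite on $[0,t_1^*]$ before invoking Lemma~\ref{lemma 2.2}.
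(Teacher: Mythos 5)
Your proof is correct, and the energy estimate itself (cancellation of the convective and pressure terms, Poincar\'e, Young) is identical to the paper's, leading to the same pair of differential inequalities \eqref{eq3.1.9}--\eqref{eq3.1.5}. The one place where you diverge is the closing step for \eqref{eq3.1.4}: the paper does not use Lemma~\ref{lemma 2.2} here, but instead integrates \eqref{eq3.1.5} explicitly against the exponential decay formula \eqref{eq3.1.1} for $\aiminnorm{\theta(t)}_{L^2}^2$, which forces a case distinction between $\nu\lambda_1^{2\alpha}\neq\kappa\lambda_1^{2\beta}$ and $\nu\lambda_1^{2\alpha}=\kappa\lambda_1^{2\beta}$ (the formulas \eqref{eq3.1.6} and \eqref{eq3.1.6.0}) but yields an explicit bound valid for all $t\geq 0$ without first waiting for $\theta$ to enter its absorbing ball. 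Your route---feeding the absorbing bound of Proposition~\ref{prop3.1} into Lemma~\ref{lemma 2.2} with $\lambda=\nu\lambda_1^{2\alpha}$---avoids the case analysis entirely and is exactly the mechanism the paper itself uses one subsection later for the vorticity in Proposition~\ref{prop3.3}; the small price is that you must separately justify, as you do, that $\aiminnorm{\boldsymbol u}_{L^2}$ remains finite on $[0,t_1^*]$ so that $y(t_1^*)$ is controlled by the initial data. Both arguments are complete; yours is arguably the cleaner and more uniform one, while the paper's gives a quantitative rate of entry into the absorbing ball.
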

\begin{proof} Taking the inner product of the equation \eqref{eq1.1.1}$_1$ with $\boldsymbol u$ in $L^2$, since $\boldsymbol u$ is divergent free and
$\aimininner{\boldsymbol u\cdot \nabla \boldsymbol u}{\boldsymbol u}=0$,  we have
$$\frac{1}{2} \frac{\mathrm{d}}{\mathrm{d}t} \aiminnorm{\boldsymbol u}_{L^2}^2 + \nu \aiminnorm{\Lambda^{\alpha} \boldsymbol u}_{L^2}^2 = \aimininner{\theta e_2}{\boldsymbol u}
\leq \frac{1}{2\nu} \aiminnorm{\Lambda^{-\alpha} \theta}_{L^2}^2 + \frac{\nu}{2}\aiminnorm{\Lambda^{\alpha} \boldsymbol u}_{L^2}^2.$$
Then,
\begin{equation} \label{eq3.1.9}
\frac{\mathrm{d}}{\mathrm{d}t} \aiminnorm{\boldsymbol u}_{L^2}^2 + \nu \aiminnorm{\Lambda^{\alpha} \boldsymbol u}_{L^2}^2
\leq  \frac{1}{\nu} \aiminnorm{\Lambda^{-\alpha} \theta}_{L^2}^2.
\end{equation}
Since $\boldsymbol u$ and $\theta$ have mean zero, by Poincar\'e's inequality, we have
\begin{equation}\label{eq3.1.5}
\frac{\mathrm{d}}{\mathrm{d}t} \aiminnorm{\boldsymbol u}_{L^2}^2 + \nu \lambda_1^{2\alpha} \aiminnorm{\boldsymbol u}_{L^2}^2 \leq \frac{1}{\nu \lambda_1^{2\alpha}} \aiminnorm{\theta}_{L^2}^2
.
\end{equation}
Now, integrating in time and using \eqref{eq3.1.1}, we obtain in the case that $\nu \lambda_1^{2\alpha} \not= \kappa\lambda_1^{2\beta}$,
\begin{equation}\label{eq3.1.6}
\aiminnorm{\boldsymbol u(t)}_{L^2}^2 \leq e^{- \nu \lambda_1^{2\alpha} t} \aiminnorm{\boldsymbol u_0}_{L^2}^2
+  \frac{1}{\nu \lambda_1^{2\alpha}}
\left|\frac{e^{-\nu \lambda_1^{2\alpha}t}- e^{- \kappa \lambda_1^{2\beta}t}}{\nu \lambda_1^{2\alpha}-\kappa \lambda_1^{2\beta}} \right|\left(\aiminnorm{\theta_0}_{L^2}^2 - \frac{\aiminnorm{f}_{L^2}^2}{\kappa^2 \lambda_1^{4\beta}}\right)
+ \frac{\aiminnorm{f}_{L^2}^2}{(\kappa \lambda_1^{2\beta}\nu \lambda_1^{2\alpha})^2},
\end{equation}
and in the case that  $\nu \lambda_1^{2\alpha} = \kappa \lambda_1^{2\beta}$,
\begin{equation} \label{eq3.1.6.0}
\aiminnorm{\boldsymbol u(t)}_{L^2}^2 \leq e^{- \nu \lambda_1^{2\alpha} t} \aiminnorm{\boldsymbol u_0}_{L^2}^2
+  e^{- \nu \lambda_1^{2\alpha} t}\frac{t}{\nu \lambda_1^{2\alpha}}
\left(\aiminnorm{\theta_0}_{L^2}^2 - \frac{\aiminnorm{f}_{L^2}^2}{\kappa^2 \lambda_1^{4\beta}}\right)
+ \frac{\aiminnorm{f}_{L^2}^2}{\kappa \lambda_1^{2\beta}(\nu \lambda_1^{2\alpha})^2},
\end{equation}
which shows the uniform estimate \eqref{eq3.1.4} for some
$t_2^* >0$ large enough. Furthermore, integrating \eqref{eq3.1.9} in time gives
\begin{equation}\label{eq3.1.7}
\aiminnorm{\boldsymbol u(t+1)}_{L^2}^2 + \nu \int_{t}^{t+1}  \aiminnorm{\Lambda^{\alpha} \boldsymbol u}_{L^2}^2 {\mathrm{d}s} \leq \aiminnorm{\boldsymbol u(t)}_{L^2}^2+ \frac{1}{\nu\lambda_1^{2\alpha}} \aiminnorm{\theta}_{L^2}^2.
\end{equation}
Therefore, the time average estimate
\eqref{eq3.1.8} of $\aiminnorm{\Lambda^{\alpha} \boldsymbol u}^2$ follows from
\eqref{eq3.1.0} and \eqref{eq3.1.4}.
\end{proof}

\subsection{$L^2$-estimate for the vorticity $w$} \label{sec3.3}
In order to proceed to find a uniform $H^s$-estimate for $(\theta, \boldsymbol u)$ in the next subsection, we also need a uniform $L^p$-estimate for the velocity $\boldsymbol u$. However, we could not simply multiply $\aiminabs{\boldsymbol u}^{p-1}\boldsymbol u$ to show the $L^p$-estimate because of the troublesome pressure term. Here, we aim to show the uniform $H^1$-estimate for the velocity $\boldsymbol u$ by proving the uniform $L^2$-estimate for the vorticity $\omega$,
which satisfies
\begin{equation}\begin{cases}\label{eq3.3.1}
\partial_t\omega + \boldsymbol u\cdot \nabla \omega + \nu(-\Delta)^\alpha \omega= \partial_1\theta , \\
\boldsymbol u = \nabla^{\perp} \Delta^{-1} \omega,  \\
\end{cases}\end{equation}
where $\nabla^{\perp}= (-\partial_2, \partial_1)$.
\begin{proposition}[Existence of absorbing ball in $L^2$ for $\omega$]\label{prop3.3}
 Under the assumptions of Theorem~\ref{thm2.1}, there exists $t_3^{*}=t_3^{*}(\aiminnorm{\omega_0}_{L^2}, \aiminnorm{\theta_0}_{L^2})  > 0$, such that
\begin{equation} \label{eq3.3.0}
\aiminnorm{\omega(t)}_{L^2},\; \aiminnorm{\boldsymbol u}_{H^1} \leq C, \qquad \forall t \geq t_3^{*},
\end{equation}
and
\begin{equation}\label{eq3.3.3}
\int_{t}^{t+1}  \aiminnorm{\Lambda^{\alpha} \omega}_{L^2}^2 {\mathrm{d}s} \leq C, \qquad \forall t \geq t_3^{*}.
\end{equation}

\end{proposition}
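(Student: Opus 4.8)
The plan is to run an $L^2$ energy estimate on the vorticity equation \eqref{eq3.3.1}, exactly in the spirit of Propositions \ref{prop3.1} and \ref{prop3.2}, and then to close the argument using the variant of the uniform Gronwall lemma, Lemma \ref{lemma 2.2}, rather than the classical one. First I would take the $L^2$ inner product of \eqref{eq3.3.1}$_1$ with $\omega$. Since $\boldsymbol u$ is divergence free, $\aimininner{\boldsymbol u\cdot\nabla\omega}{\omega}=0$, and the dissipation term contributes $\nu\aiminnorm{\Lambda^\alpha\omega}_{L^2}^2$. For the source term $\aimininner{\partial_1\theta}{\omega}$ the point is to distribute the full derivative onto $\theta$: writing it on the Fourier side and using $|k_1|\le|k|$ together with Cauchy--Schwarz gives $|\aimininner{\partial_1\theta}{\omega}|\le\aiminnorm{\Lambda^{1-\alpha}\theta}_{L^2}\aiminnorm{\Lambda^\alpha\omega}_{L^2}$, and then Young's inequality yields
\[
\frac{\mathrm d}{\mathrm dt}\aiminnorm{\omega}_{L^2}^2+\nu\aiminnorm{\Lambda^\alpha\omega}_{L^2}^2\le\frac{1}{\nu}\aiminnorm{\Lambda^{1-\alpha}\theta}_{L^2}^2.
\]

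Next I would control the right-hand side by the already established estimates for $\theta$. Since $\alpha,\beta>\tfrac12$ by \eqref{eq1.2}, we have $0<1-\alpha<\tfrac12<\beta$, so the interpolation inequality Lemma \ref{lemma2.0} gives $\aiminnorm{\Lambda^{1-\alpha}\theta}_{L^2}\le\aiminnorm{\theta}_{L^2}^{1-\frac{1-\alpha}{\beta}}\aiminnorm{\Lambda^{\beta}\theta}_{L^2}^{\frac{1-\alpha}{\beta}}$. Combining the uniform $L^2$-bound \eqref{eq3.1.0}, the time-average bound \eqref{eq3.1.3}, and Hölder's inequality in time (the relevant exponent $\tfrac{2(1-\alpha)}{\beta}$ is $<2$ precisely because $\alpha+\beta>1$), one obtains $\int_t^{t+1}\aiminnorm{\Lambda^{1-\alpha}\theta}_{L^2}^2\,\mathrm ds\le C$ for all $t\ge t_1^{*}$. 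Setting $g(t):=\tfrac1\nu\aiminnorm{\Lambda^{1-\alpha}\theta}_{L^2}^2$ and using Poincaré's inequality $\aiminnorm{\Lambda^\alpha\omega}_{L^2}^2\ge\lambda_1^{2\alpha}\aiminnorm{\omega}_{L^2}^2$ (valid since $\omega$ has mean zero), the displayed inequality becomes $\frac{\mathrm d}{\mathrm dt}\aiminnorm{\omega}_{L^2}^2+\nu\lambda_1^{2\alpha}\aiminnorm{\omega}_{L^2}^2\le g(t)$ with $\int_t^{t+1}g\le C$, so Lemma \ref{lemma 2.2} produces a time $t_3^{*}>0$, depending only on $\aiminnorm{\omega_0}_{L^2}$ and $\aiminnorm{\theta_0}_{L^2}$, past which $\aiminnorm{\omega(t)}_{L^2}\le C$. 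The $H^1$-bound in \eqref{eq3.3.0} then follows from $\aiminnorm{\nabla\boldsymbol u}_{L^2}=\aiminnorm{\omega}_{L^2}$ (periodic, divergence-free, mean-zero setting) together with \eqref{eq3.1.4}, after enlarging $t_3^{*}$ so that $t_3^{*}\ge t_2^{*}$. Finally, integrating the displayed inequality over $[t,t+1]$ and inserting the uniform bound on $\aiminnorm{\omega}_{L^2}$ just obtained, along with the time-average bound on $g$, gives \eqref{eq3.3.3}.

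The main obstacle is the closure of the differential inequality: we cannot a priori produce a uniform time-average bound $\int_t^{t+1}\aiminnorm{\omega}_{L^2}^2\,\mathrm ds\le C$ without already knowing the pointwise bound, so the classical uniform Gronwall Lemma \ref{lemma 2.1} is not directly applicable — this is exactly why the variant Lemma \ref{lemma 2.2}, which needs no such hypothesis, is used. A secondary technical point is handling the source $\partial_1\theta$ correctly: one must put the whole derivative on $\theta$ and absorb only $\Lambda^\alpha\omega$ (and not $\nabla\omega$, which is not controlled by the dissipation when $\alpha<1$), and one must check that $1-\alpha<\beta$ so that $\Lambda^{1-\alpha}\theta$ is dominated by the temperature estimates. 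As usual, all the differential identities above are first justified on the Galerkin approximations and then passed to the limit.
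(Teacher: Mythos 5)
Your proposal is correct and follows essentially the same route as the paper: the same $L^2$ energy estimate on the vorticity equation with the source bounded by $\frac{1}{\nu}\aiminnorm{\Lambda^{1-\alpha}\theta}_{L^2}^2$, the same interpolation against Proposition~\ref{prop3.1} to get the unit-interval time average of that quantity, and the same application of the variant uniform Gronwall lemma (Lemma~\ref{lemma 2.2}) with $\lambda=\nu\lambda_1^{2\alpha}$, followed by Biot--Savart for the $H^1$ bound and time integration for \eqref{eq3.3.3}. You have correctly identified why Lemma~\ref{lemma 2.2} rather than Lemma~\ref{lemma 2.1} is the key ingredient, which is exactly the point the paper emphasizes.
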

\begin{proof} Taking the inner product of the equation \eqref{eq3.3.1}$_1$ with $\omega$ in $L^2$, we have
$$\frac{1}{2} \frac{\mathrm{d}}{\mathrm{d}t} \aiminnorm{\omega}_{L^2}^2 + \nu \aiminnorm{\Lambda^{\alpha} \omega}_{L^2}^2 =  \aimininner{\partial_1\theta}{\omega} \leq \frac{1}{2\nu} \aiminnorm{\Lambda^{1-\alpha} \theta}_{L^2}^2+ \frac{\nu}{2}\aiminnorm{\Lambda^{\alpha} \omega}_{L^2}^2.$$
Hence,
\begin{equation} \label{eq3.3.10}
\frac{\mathrm{d}}{\mathrm{d}t} \aiminnorm{\omega}_{L^2}^2 + \nu \aiminnorm{\Lambda^{\alpha} \omega}_{L^2}^2 \leq  \frac{1}{\nu} \aiminnorm{\Lambda^{1-\alpha} \theta}_{L^2}^2.
\end{equation}
Since $\omega$ has mean zero, Poincar\'e's inequality implies
\begin{equation} \label{eq3.3.11}
\frac{\mathrm{d}}{\mathrm{d}t} \aiminnorm{\omega}_{L^2}^2 + \nu \lambda_1^{2\alpha} \aiminnorm{\omega}_{L^2}^2 \leq  \frac{1}{\nu} \aiminnorm{\Lambda^{1-\alpha}\theta}_{L^2}^2.
\end{equation}
By the assumption \eqref{eq1.2}, we have $1-\alpha \leq \beta$ and using the interpolation inequality, we deduce from Proposition \ref{prop3.1} that
\[
\int_{t}^{t+1} \aiminnorm{\Lambda^{1-\alpha}\theta}_{L^2}^2 {\mathrm{d}s} \leq C,\qquad \forall t \geq t_1^*.
\]
Hence, applying Lemma \ref{lemma 2.2} with $y=\aiminnorm{\omega}_{L^2}^2$, $g=\frac{1}{\nu} \aiminnorm{\Lambda^{1-\alpha}\theta}_{L^2}^2$, and $\lambda=\nu \lambda_1^{2\alpha}$, we  obtain the uniform estimate \eqref{eq3.3.0} for $\omega$ and  also for $\boldsymbol u$ by the Biot-Savart law.
Furthermore, integrating \eqref{eq3.3.10} in time yields
\begin{equation}\label{eq3.3.2}
\aiminnorm{\omega(t+1)}_{L^2}^2 + \nu \int_{t}^{t+1}  \aiminnorm{\Lambda^{\alpha} \omega}_{L^2}^2 {\mathrm{d}s} \leq \aiminnorm{\omega(t)}_{L^2}^2+ \frac{1}{\nu}
\int_{t}^{t+1} \aiminnorm{\Lambda^{1-\alpha}\theta}_{L^2}^2 {\mathrm{d}s}.
\end{equation}
Therefore, the time average estimate
\eqref{eq3.3.3} of $\aiminnorm{\Lambda^{\alpha} \omega}^2$ follows.
\end{proof}
As an immediate consequence of Proposition \ref{prop3.3} and the Sobolev embedding theorem, we have a uniform $L^p$-estimate for $\boldsymbol u$ and a time average estimate of $\aiminnorm{\Lambda^{1+\alpha} \boldsymbol u}^2$,
that is for all $1<p<\infty$,
\begin{equation} \label{eq3.4.2}
\aiminnorm{\boldsymbol u}_{L^p}\leq C(p), \qquad
\int_{t}^{t+1} \aiminnorm{\Lambda^{1+\alpha} \boldsymbol u}^2 {\mathrm{d}s} \leq C,
\qquad \forall t\geq t_3^*,
\end{equation}
where the constant $C(p)>0$ only depends on $p$, but is independent of the time $t$ and the initial data $\boldsymbol u_0$ and $\theta_0$.
\subsection{$H^s$-estimate of $\theta$ and $\boldsymbol u$} \label{sec3.4}
\begin{proposition} [Existence of absorbing ball in $H^s$ for $(\theta, \boldsymbol u)$]\label{prop3.4}
Under the assumptions of Theorem~\ref{thm2.1},
there exists $t_4^{*}=t_4^{*}(\aiminnorm{\theta_0}_{L^2}, \aiminnorm{\boldsymbol u_0}_{L^2})  > 0$, such that
\begin{equation} \label{eq3.4.0.1}
\aiminnorm{\theta(t)}_{H^{s_1}} \leq C, \qquad \aiminnorm{\boldsymbol u}_{H^{s_2}} \leq C, \qquad \forall t \geq t_4^{*},
\end{equation}
and
\begin{equation}\label{eq3.4.0.2}
\int_{t}^{t+1}  \aiminnorm{\Lambda^{s_1+\beta} \theta}_{L^2}^2 {\mathrm{d}s} \leq C,
\qquad \int_{t}^{t+1}  \aiminnorm{\Lambda^{s_2+\alpha} \boldsymbol u}_{L^2}^2 {\mathrm{d}s}
\leq C, \qquad \forall t \geq t_4^{*}.
\end{equation}
\end{proposition}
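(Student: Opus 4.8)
The plan is to derive a simultaneous higher-order energy estimate for $\theta$ and $\boldsymbol u$ and to climb from the bounds already established in Propositions~\ref{prop3.1}--\ref{prop3.3} and \eqref{eq3.4.2} up to the pair $(s_1,s_2)$ by a finite bootstrap. Fix provisional exponents $\sigma\in(0,s_1]$ and $\tau\in[1,s_2]$ with $0\le\tau-\sigma<\alpha+\beta$. Apply $\Lambda^{\sigma}$ to \eqref{eq1.1.1}$_3$ and test with $\Lambda^{\sigma}\theta$, apply $\Lambda^{\tau}$ to \eqref{eq1.1.1}$_1$ and test with $\Lambda^{\tau}\boldsymbol u$, and add; the pressure drops since $\nabla\cdot\boldsymbol u=0$, and the leading transport terms $\aimininner{\boldsymbol u\cdot\nabla\Lambda^{\sigma}\theta}{\Lambda^{\sigma}\theta}$ and $\aimininner{\boldsymbol u\cdot\nabla\Lambda^{\tau}\boldsymbol u}{\Lambda^{\tau}\boldsymbol u}$ vanish by the divergence-free condition, leaving
\[
\frac12\frac{\mathrm{d}}{\mathrm{d}t}\!\left(\aiminnorm{\Lambda^{\sigma}\theta}_{L^2}^2+\aiminnorm{\Lambda^{\tau}\boldsymbol u}_{L^2}^2\right)+\kappa\aiminnorm{\Lambda^{\sigma+\beta}\theta}_{L^2}^2+\nu\aiminnorm{\Lambda^{\tau+\alpha}\boldsymbol u}_{L^2}^2=I+II+III+IV,
\]
where $I=-\aimininner{[\Lambda^{\sigma},\boldsymbol u\cdot\nabla]\theta}{\Lambda^{\sigma}\theta}$, $II=-\aimininner{[\Lambda^{\tau},\boldsymbol u\cdot\nabla]\boldsymbol u}{\Lambda^{\tau}\boldsymbol u}$, $III=\aimininner{\Lambda^{\tau}(\theta\boldsymbol e_2)}{\Lambda^{\tau}\boldsymbol u}$ and $IV=\aimininner{\Lambda^{\sigma}f}{\Lambda^{\sigma}\theta}$. (Equivalently one may work with the vorticity equation \eqref{eq3.3.1} at level $\tau-1$ and the Biot--Savart law.) The terms $III$ and $IV$ are harmless: moving $\beta$ derivatives, $|IV|\le\epsilon\aiminnorm{\Lambda^{\sigma+\beta}\theta}_{L^2}^2+C\aiminnorm{f}_{H^{s_1-\beta}}^2$ since $\sigma\le s_1$; and $|III|=|\aimininner{\Lambda^{\tau-\alpha}\theta}{\Lambda^{\tau+\alpha}u_2}|\le\epsilon\aiminnorm{\Lambda^{\tau+\alpha}\boldsymbol u}_{L^2}^2+C\aiminnorm{\Lambda^{\tau-\alpha}\theta}_{L^2}^2$, where $\tau-\alpha<\sigma+\beta$ (exactly \eqref{cond2} at the endpoint), so Lemma~\ref{lemma2.0} and Proposition~\ref{prop3.1} give $\aiminnorm{\Lambda^{\tau-\alpha}\theta}_{L^2}^2\le\epsilon\aiminnorm{\Lambda^{\sigma+\beta}\theta}_{L^2}^2+C$.

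The core is the estimate of the commutators $I$ and $II$, and this is where \eqref{cond1} and \eqref{cond2} enter. Writing $|I|\le\aiminnorm{[\Lambda^{\sigma},\boldsymbol u\cdot\nabla]\theta}_{L^2}\aiminnorm{\Lambda^{\sigma}\theta}_{L^2}$ and applying Lemma~\ref{lem2.5} with $(f,g)=(\boldsymbol u,\theta)$ (and $II$ with $(f,g)=(\boldsymbol u,\boldsymbol u)$) produces products of the factors $\aiminnorm{\nabla\boldsymbol u}_{L^{p}}$, $\aiminnorm{\Lambda^{\sigma}\theta}_{L^{p}}$, $\aiminnorm{\Lambda^{\sigma}\boldsymbol u}_{L^{p}}$, $\aiminnorm{\nabla\theta}_{L^{p}}$, $\aiminnorm{\Lambda^{\tau}\boldsymbol u}_{L^{p}}$, with $2<p_1,p_2,q_1,q_2\le\infty$ and $\tfrac12=\tfrac1{p_1}+\tfrac1{p_2}=\tfrac1{q_1}+\tfrac1{q_2}$. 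Each factor is passed to an $L^2$-based Sobolev norm through the two-dimensional embedding $H^{a}\hookrightarrow H^{b,p}$ (valid when $a-b\ge 1-\tfrac2p$) and then interpolated, via Lemma~\ref{lemma2.0}, between a norm that is \emph{uniformly} bounded --- $\aiminnorm{\theta}_{L^q}$, $\aiminnorm{\boldsymbol u}_{L^q}$, $\aiminnorm{\boldsymbol u}_{H^1}$ from Propositions~\ref{prop3.1}--\ref{prop3.3} and \eqref{eq3.4.2}, or the bound obtained at the previous bootstrap level --- and one of the dissipation norms $\aiminnorm{\Lambda^{\sigma+\beta}\theta}_{L^2}$, $\aiminnorm{\Lambda^{\tau+\alpha}\boldsymbol u}_{L^2}$. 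The H\"older exponents, together with a sufficiently small bootstrap step, are chosen so that every dissipation factor appears with exponent $<2$ and the remaining factors are either uniformly bounded or controlled by the time-averaged quantities $\aiminnorm{\Lambda^{\beta}\theta}_{L^2}^2$, $\aiminnorm{\Lambda^{1+\alpha}\boldsymbol u}_{L^2}^2$ (and their analogues at the previous level). After Young's inequality (absorbing the dissipation factors into the left side) and H\"older in time, the estimate collapses to
\[
\frac{\mathrm{d}}{\mathrm{d}t}\!\left(\aiminnorm{\Lambda^{\sigma}\theta}_{L^2}^2+\aiminnorm{\Lambda^{\tau}\boldsymbol u}_{L^2}^2\right)+\frac{\kappa}{2}\aiminnorm{\Lambda^{\sigma+\beta}\theta}_{L^2}^2+\frac{\nu}{2}\aiminnorm{\Lambda^{\tau+\alpha}\boldsymbol u}_{L^2}^2\le h(t),\qquad \int_{t}^{t+1}h(s)\,\mathrm{d}s\le C,
\]
for $t$ past a data-dependent time. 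The bookkeeping of exponents --- in each Leibniz factor the two ``Sobolev deficits'' sum to $1$ and must fit between the available low regularity and the dissipation level --- is solvable at $(\sigma,\tau)=(s_1,s_2)$ precisely when \eqref{cond1} and \eqref{cond2} hold; in particular the factor $\aiminnorm{\nabla\theta}_{L^{q_2}}$ in $I$ forces the lower bound \eqref{cond1} on $s_1$, and the appearance of $\boldsymbol u$ inside $I$ forces the gap bound $s_2-s_1<\alpha+\beta$.

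Then I would bootstrap in $(\sigma,\tau)$. Start from $(\sigma_0,\tau_0)=(0,1)$, where Propositions~\ref{prop3.1} and \ref{prop3.3} already provide uniform bounds and the time averages $\int_{t}^{t+1}\!\left(\aiminnorm{\Lambda^{\beta}\theta}_{L^2}^2+\aiminnorm{\Lambda^{1+\alpha}\boldsymbol u}_{L^2}^2\right)\mathrm{d}s\le C$. Given uniform and time-averaged bounds at level $(\sigma_j,\tau_j)$, pick $\sigma_{j+1}$ and $\tau_{j+1}$ with increments $\sigma_{j+1}-\sigma_j\in(0,\beta)$ and $\tau_{j+1}-\tau_j\in(0,\alpha)$ (capped at $s_1$ and $s_2$), small enough for the previous paragraph and chosen so that $0\le\tau_{j+1}-\sigma_{j+1}<\alpha+\beta$; this is possible because \eqref{cond2} holds at the target and the increments stay below $\beta$ and $\alpha$. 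In the displayed inequality at level $(\sigma_{j+1},\tau_{j+1})$ the function $h$ is controlled by the step-$j$ bounds; combining it with Poincar\'e's inequality $\aiminnorm{\Lambda^{\sigma_{j+1}+\beta}\theta}_{L^2}^2\ge\lambda_1^{2\beta}\aiminnorm{\Lambda^{\sigma_{j+1}}\theta}_{L^2}^2$ (and its $\boldsymbol u$-analogue), Lemma~\ref{lemma 2.2} gives $\aiminnorm{\Lambda^{\sigma_{j+1}}\theta}_{L^2},\aiminnorm{\Lambda^{\tau_{j+1}}\boldsymbol u}_{L^2}\le C$ for all large $t$, and integrating over $[t,t+1]$ yields the time averages at level $(\sigma_{j+1}+\beta,\tau_{j+1}+\alpha)$. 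Since each step raises $\sigma$ or $\tau$ by a definite amount until both saturate, finitely many steps reach $(s_1,s_2)$, which is \eqref{eq3.4.0.1} and \eqref{eq3.4.0.2}; $t_4^{*}$ is the last of the finitely many threshold times, hence depends on the data only through $t_1^{*}$ and $t_3^{*}$. The step I expect to be the main obstacle is exactly this exponent bookkeeping for the commutators: checking that the H\"older, Sobolev, and interpolation constraints for \emph{every} term in $I$ and $II$ can be satisfied simultaneously with all dissipation factors sub-quadratic, and that the bootstrap path $(\sigma_j,\tau_j)$ can be kept inside the admissible region all the way to $(s_1,s_2)$ --- which is precisely where hypotheses \eqref{cond1} and \eqref{cond2} are consumed.
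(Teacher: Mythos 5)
Your overall architecture is the same as the paper's: a coupled $H^{\sigma}\times H^{\tau}$ energy estimate, the buoyancy term tamed by moving derivatives and interpolating against the dissipation exactly where \eqref{cond2} is consumed, and a finite bootstrap launched from the level-$(0,1)$ bounds of Propositions~\ref{prop3.1}--\ref{prop3.3} and \eqref{eq3.4.2}. Where you genuinely diverge is in the nonlinear terms: you peel off the transport part and estimate the commutators $[\Lambda^{\sigma},\boldsymbol u\cdot\nabla]\theta$, $[\Lambda^{\tau},\boldsymbol u\cdot\nabla]\boldsymbol u$ via Lemma~\ref{lem2.5} (the route the paper itself takes only later, in Section~\ref{sec4.2}, for the difference equations), whereas the paper writes $\boldsymbol u\cdot\nabla\theta=\nabla\cdot(\boldsymbol u\theta)$, pairs only $s_1+\beta_1$ derivatives with $\theta$ for a $\beta_1=1/2+1/q_\theta<\min\{\alpha,\beta\}$, and applies the product estimate of Lemma~\ref{lem2.4}. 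The paper's split has a concrete advantage: one Leibniz factor is the plain $\aiminnorm{\theta}_{L^{q_\theta}}$, uniformly bounded by the $L^p$ maximum principle \eqref{eq3.1.2} via the embedding $H^{s_1}\hookrightarrow L^{q_\theta}$ --- which is precisely where \eqref{cond1} enters --- so no factor ever carries a full gradient of $\theta$. You also replace the paper's use of the uniform Gronwall Lemma~\ref{lemma 2.1} (which keeps $C(\aiminnorm{\Lambda^{s_1}\theta}_{L^2}^2+\aiminnorm{\Lambda^{s_2}\boldsymbol u}_{L^2}^2)$ on the right and feeds it the time-averaged bound at the \emph{current} level) by absorbing everything into $h(t)$ and invoking Lemma~\ref{lemma 2.2}; that is workable but imposes the extra constraint that the interpolation exponents against the dissipation norms stay strictly subcritical.

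The gap is that the decisive step --- the exponent bookkeeping for $I$ and $II$ --- is asserted rather than carried out, and you flag it yourself as the main obstacle; but this bookkeeping \emph{is} the proof, since it is the only place \eqref{cond1} and \eqref{cond2} are actually used. Concretely, the commutator route forces you to handle $\aiminnorm{\Lambda^{\sigma}\boldsymbol u}_{L^{q_1}}\aiminnorm{\nabla\theta}_{L^{q_2}}$: the second factor costs $2-2/q_2>1$ derivatives of $\theta$ in $L^2$, so it can only be interpolated against $\aiminnorm{\Lambda^{\sigma+\beta}\theta}_{L^2}$ when $2-2/q_2\le\sigma+\beta$, i.e.\ $\sigma>1-\beta$ at a minimum; simultaneously the first factor costs $\sigma+2/q_2$ derivatives of $\boldsymbol u$, and the sum of the three interpolation weights landing on $\aiminnorm{\Lambda^{\sigma+\beta}\theta}_{L^2}$ and $\aiminnorm{\Lambda^{\tau+\alpha}\boldsymbol u}_{L^2}$ (two from $I$ itself, one from the outer $\aiminnorm{\Lambda^{\sigma}\theta}_{L^2}$) must stay below $2$ for Young's inequality to close, at every level of the bootstrap and for every admissible $(\alpha,\beta)$ in \eqref{eq1.2}. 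Until you exhibit a choice of $q_1,q_2$ (and of the bootstrap path) verifying these inequalities for all four Leibniz factors in $I$ and $II$, the argument is a plausible plan rather than a proof; the paper's divergence-form split with $\beta_1<\min\{\alpha,\beta\}$ and the $L^{q_\theta}$ maximum-principle bound is precisely the device that makes this arithmetic go through.
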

\begin{proof}
Taking the inner product of  the equation \eqref{eq1.1.1}$_3$ with $\Lambda^{2{s_1}}\theta$ in $L^2$, we have
\begin{equation}
\begin{split}
\frac{1}{2} \frac{\mathrm{d}}{\mathrm{d}t} \aiminnorm{\Lambda^{s_1} \theta}_{L^2}^2 + \aimininner{\boldsymbol u \cdot \nabla \theta}{\Lambda^{2{s_1}}\theta}+ \kappa \aiminnorm{\Lambda^{s_1+\beta} \theta}_{L^2}^2 &= \aimininner{f}{\Lambda^{2s_1}\theta} \\
&\leq \frac{C}{\kappa}\aiminnorm{\Lambda^{s_1-\beta}f}_{L^2}^2+
\frac{\kappa}{12}\aiminnorm{\Lambda^{s_1+\beta}\theta}_{L^2}^2.
\end{split}
\end{equation}
Since $\boldsymbol u$ is divergence free, then $\boldsymbol u \cdot \nabla \theta= \nabla \cdot (\boldsymbol u \theta)$ and according to Lemma~\ref{lem2.4}, we have
\begin{equation} \label{eq3.4.1}
\begin{split}
\aiminabs {\aimininner{\boldsymbol u \cdot \nabla \theta}{\Lambda^{2s_1}\theta}}
&= \aiminabs {\aimininner{\nabla \cdot (\boldsymbol u \theta)}{\Lambda^{2s_1}\theta}} \\
&\leq \aiminnorm{\Lambda^{s_1+\beta_1}\theta}_{L^2}
\aiminnorm{\Lambda^{s_1+1-\beta_1}(\boldsymbol u \theta)}_{L^2}  \\
&\leq C\aiminnorm{\Lambda^{s_1+\beta_1}\theta}_{L^2}
\left(\aiminnorm{\Lambda^{s_1+1-\beta_1}\boldsymbol u}_{L^{p_\theta}}\aiminnorm{\theta}_{L^{q_\theta}}+
\aiminnorm{\Lambda^{s_1+1-\beta_1}\theta}_{L^{p_\theta}}\aiminnorm{\boldsymbol u}_{L^{q_\theta}}\right),
\end{split}
\end{equation}
for some $\beta_1>0,\, {p_\theta},\, {q_\theta}>2$, which are determined later,  such that $\beta_1<\beta$ and $1/p_\theta+1/q_\theta=1/2$. In order to determine $p_\theta, q_\theta$ and $\beta_1$, we first choose $r_1>0$ such that
\begin{equation}
r_1=\begin{cases}
s_1, \quad & 2\mathrm{max}\{1-\alpha, 1-\beta\} < s_1 < 1,\\
\text{any number in} \hspace{2pt} (2\mathrm{max} \{1-\alpha, 1-\beta\}, 1),&\qquad s_1\geq 1.
\end{cases}
\end{equation}
Then, the Sobolev embedding theorem implies that
$$\theta_0 \in H^{s_1} \subset H^{r_1} \subset L^{q_\theta},$$
where $q_\theta$ is chosen such that
$$\frac{1}{q_{\theta}} = \frac{1-r_1}{2} < \mathrm{min}\{\alpha, \beta \}-\frac{1}{2}.$$
Therefore, by the uniform $L^p$-estimates \eqref{eq3.1.2} and \eqref{eq3.4.2} for $(\boldsymbol u, \theta)$, we infer from \eqref{eq3.4.1} that for $t > \mathrm{max} \{t_1^*, t_2^* \}$,
\begin{equation} \label{eq3.4.3}
\begin{split}
\aiminabs {\aimininner{\boldsymbol u \cdot \nabla \theta}{\Lambda^{2s_1}\theta}}
&\leq C\aiminnorm{\Lambda^{s_1+\beta_1}\theta}_{L^2}
\aiminnorm{\Lambda^{s_1+1-\beta_1}\boldsymbol u}_{L^{p_\theta}}+
C\aiminnorm{\Lambda^{s_1+\beta_1}\theta}_{L^2}\aiminnorm{\Lambda^{s_1+1-\beta_1}\theta}_{L^{p_\theta}} \\
& =: I_1+I_2.
\end{split}
\end{equation}
Now, we set $s_1+2-2/p_\theta-\beta_1=s_1+\beta_1$, that is $\beta_1 = 1/2+ 1/q_{\theta} < \text{min}\{\alpha, \beta\}$, then the Sobolev embedding theorem implies that $H^{s_1+2-\frac{2}{p_\theta}-\beta_1}(\Omega) \hookrightarrow H^{{s_1+1-\beta_1},{p_\theta}}(\Omega)$ and the term $I_1$ can be estimated as
\begin{equation} \label{eq3.4.4}
\begin{split}
I_1&\leq C \aiminnorm{\Lambda^{s_1+\beta_1}\theta}_{L^2}
\aiminnorm{\Lambda^{s_1+\beta_1}\boldsymbol u}_{L^2} \\
&\leq C\aiminnorm{\Lambda^{s_1+\beta}\theta}_{L^2}^{\frac{\beta_1}{\beta}}
\aiminnorm{\Lambda^{s_1}\theta}_{L^2}^{1-\frac{\beta_1}{\beta}} \aiminnorm{\Lambda^{s_1+\alpha}\boldsymbol u}_{L^2}^{\frac{\beta_1}{\alpha}}
\aiminnorm{\Lambda^{s_1}\boldsymbol u}_{L^2}^{1-\frac{\beta_1}{\alpha}},
\end{split}
\end{equation}
where we used the interpolation inequality for the tuples $(s_1, s_1+\beta_1, s_1+\beta)$ and $(s_1, s_1 + \beta_1, s_1 + \alpha)$. Since $0 < s_1 \leq s_2$, applying Poincar\'e's inequality,
we find
\begin{equation} \label{eq3.4.4.1}
\aiminnorm{\Lambda^{s_1+\alpha}\boldsymbol u}_{L^2} \leq
C\aiminnorm{\Lambda^{s_2+\alpha}\boldsymbol u}_{L^2},  \qquad
\aiminnorm{\Lambda^{s_1}\boldsymbol u}_{L^2} \leq
C\aiminnorm{\Lambda^{s_2}\boldsymbol u}_{L^2}.
\end{equation}
Hence, using Young's inequality, we find
\begin{equation} \label{eq3.4.4.2}
\begin{split}
I_1 &\leq C\aiminnorm{\Lambda^{s_1+\beta}\theta}_{L^2}^{\frac{\beta_1}{\beta}}
\aiminnorm{\Lambda^{s_1}\theta}_{L^2}^{1-\frac{\beta_1}{\beta}}
\aiminnorm{\Lambda^{s_2+\alpha}\boldsymbol u}_{L^2}^{\frac{\beta_1}{\alpha}}
\aiminnorm{\Lambda^{s_2}\boldsymbol u}_{L^2}^{1-\frac{\beta_1}{\alpha}} \\
& \leq \frac{\kappa}{12}\aiminnorm{\Lambda^{s_1+\beta}\theta}_{L^2}^2
+ \frac{C}{\kappa^a}\aiminnorm{\Lambda^{s_1}\theta}_{L^2}^2
+\frac{\nu}{4}\aiminnorm{\Lambda^{s_2+\alpha}\boldsymbol u}_{L^2}^2
+\frac{C}{\nu^b}\aiminnorm{\Lambda^{s_2}\boldsymbol u}_{L^2}^2,
\end{split}
\end{equation}
where $a=\beta_1/(\beta-\beta_1)$ and $b=\beta_1/(\alpha-\beta_1)$. For the term $I_2$ as for $I_1$, we deduce from the same Sobolev embedding and interpolation inequality that
\begin{equation}  \label{eq3.4.5}
\begin{split}
I_2
\leq C \aiminnorm{\Lambda^{s_1+\beta_1}\theta}_{L^2}^2
&\leq C\aiminnorm{\Lambda^{s_1+\beta}\theta}_{L^2}^{\frac{2\beta_1}{\beta}}
\aiminnorm{\Lambda^{s_1}\theta}_{L^2}^{2-\frac{2\beta_1}{\beta}}  \\
&\leq \frac{\kappa}{12} \aiminnorm{\Lambda^{s_1+\beta}\theta}_{L^2}^2
+\frac{C}{\kappa^a}\aiminnorm{\Lambda^{s_1}\theta}_{L^2}^2.
\end{split}
\end{equation}
Finally, we arrive at a differential inequality for $\theta$:
\begin{equation} \label{eq3.4.6}
\begin{split}
\frac{1}{2} \frac{\mathrm{d}}{\mathrm{d}t} \aiminnorm{\Lambda^{s_1} \theta}_{L^2}^2 + \kappa \aiminnorm{\Lambda^{s_1+\beta} \theta}_{L^2}^2
\leq & \frac{\kappa}{4} \aiminnorm{\Lambda^{s_1+\beta}\theta}_{L^2}^2
+\frac{\nu}{4} \aiminnorm{\Lambda^{s_2+\alpha}\boldsymbol u}_{L^2}^2
+\frac{C}{\kappa^a}\aiminnorm{\Lambda^{s_1}\theta}_{L^2}^2 \\
&+\frac{C}{\kappa}\aiminnorm{\Lambda^{s_1-\beta}f}_{L^2}^2+\frac{C}{\nu^b}\aiminnorm{\Lambda^{s_2}\boldsymbol u}_{L^2}^2.
\end{split}
\end{equation}

Taking the inner product of  the equation of \eqref{eq1.1.1}$_1$ with $\Lambda^{2s_2}\boldsymbol u$ in $L^2$, we have
\begin{equation}\label{eq3.5.1}
\frac{1}{2} \frac{\mathrm{d}}{\mathrm{d}t} \aiminnorm{\Lambda^{s_2} \boldsymbol u}_{L^2}^2 + \aimininner{\boldsymbol u \cdot \nabla \boldsymbol u}{\Lambda^{2s_2}\boldsymbol u}+ \nu \aiminnorm{\Lambda^{s_2+\alpha} \boldsymbol u}_{L^2}^2 = \aimininner{\theta \boldsymbol e_2}{\Lambda^{2s_2} \boldsymbol u}.
\end{equation}
The right-hand side of \eqref{eq3.5.1} is estimated by
\[
\aiminabs{\aimininner{\theta \boldsymbol e_2}{\Lambda^{2s_2} \boldsymbol u}}
 = \aiminabs{\aimininner{\Lambda^{s_1+\beta}\theta \boldsymbol e_2}{\Lambda^{2s_2-s_1-\beta} \boldsymbol u}}
 \leq \aiminnorm{\Lambda^{s_1+\beta}\theta}_{L^2} \aiminnorm{\Lambda^{2s_2-s_1-\beta}\boldsymbol u}_{L^2},
\]
which by using the interpolation inequality for the tuple $(0, 2s_2-s_1-\beta, s_2+\alpha)$ because of the assumption $0 \leq s_2-s_1 < \alpha + \beta$, is bounded by
\[
\aiminnorm{\Lambda^{s_1+\beta}\theta}_{L^2}
\aiminnorm{\Lambda^{s_2+\alpha}\boldsymbol u}_{L^2}^{1-\frac{s_1+\alpha+\beta-s_2}{s_2+\alpha}}
\aiminnorm{\boldsymbol u}_{L^2} ^{\frac{s_1+\alpha+\beta-s_2}{s_2+\alpha}},
\]
and finally applying Young's inequality, and since $\aiminnorm{\boldsymbol u}_{L^2}$ is uniformly bounded by Proposition~\ref{prop3.2}, we obtain that for $t > t^*_2$,
\begin{equation} \label{eq3.5.1.1}
\aiminabs{\aimininner{\theta \boldsymbol e_2}{\Lambda^{2s_2} \boldsymbol u}}
 \leq \frac{\kappa}{4} \aiminnorm{\Lambda^{s_1+\beta}\theta}_{L^2}^2
+ \frac{\nu}{8} \aiminnorm{\Lambda^{s_2+\alpha}\boldsymbol u}_{L^2}^2
+ \frac{C}{\kappa^{a'} \nu^{\frac{1}{a'}-1}},
\end{equation}
where $a'= (s_1+\alpha+\beta-s_2)/(s_2+\alpha)$. We now estimate the nonlinear term in \eqref{eq3.5.1}. Similar to \eqref{eq3.4.1} and using the $L^p$-estimate \eqref{eq3.4.2} of $\boldsymbol u$ and Lemma \ref{lem2.4}, we first have that for $t > \mathrm{max} \{t_2^*, t_3^*\}$,
\begin{equation} \label{eqn3.5.2.2}
\begin{split}
\aiminabs{\aimininner{\boldsymbol u \cdot \nabla \boldsymbol u}{\Lambda^{2s_2}\boldsymbol u}}
&\leq C \aiminnorm{\Lambda^{s_2+\alpha_1}\boldsymbol u}_{L^2} \aiminnorm{\Lambda^{s_2+1-\alpha_1} (\boldsymbol u \otimes \boldsymbol u)}_{L^2} \\
&\leq C \aiminnorm{\Lambda^{s_2+\alpha_1}\boldsymbol u}_{L^2} \aiminnorm{\Lambda^{s_2+1-\alpha_1}\boldsymbol u}_{L^{p_{\boldsymbol u}}}
\aiminnorm{\boldsymbol u}_{L^{q_{\boldsymbol u}}}  \\
&\leq C \aiminnorm{\Lambda^{s_2+\alpha_1}\boldsymbol u}_{L^2} \aiminnorm{\Lambda^{s_2+1-\alpha_1}\boldsymbol u}_{L^{p_{\boldsymbol u}}}, \\
\end{split}
\end{equation}
for some $\alpha_1>0, \, {p_{\boldsymbol u}},\, {q_{\boldsymbol u}}>2$, which are determined later,  such that $\alpha_1< \alpha$ and $1/p_{\boldsymbol u}+1/q_{\boldsymbol u}=1/2$. Now, if we set $s_2+2-2/p_{\boldsymbol u}-\alpha_1=s_2+\alpha_1$, so that $\alpha_1=1/2+ 1/q_{\boldsymbol u}=1-1/p_{\boldsymbol u}$ and choose $q_{\boldsymbol u}$ large enough such that $\alpha_1 < \alpha$, then by the Sobolev embedding theorem $H^{s_2+2-\frac{2}{p_{\boldsymbol u}}-\alpha_1}(\Omega) \hookrightarrow H^{{s_2+1-\alpha_1},{p_{\boldsymbol u}}}(\Omega)$ and the interpolation inequality for the tuple $(s_2, s_2+\alpha_1, s_2 + \alpha)$ and Young's inequality, we arrive at
\begin{equation}\label{eq3.5.2}
\begin{split}
\aiminabs{\aimininner{\boldsymbol u \cdot \nabla \boldsymbol u}{\Lambda^{2s_2}\boldsymbol u}}
&\leq C \aiminnorm{\Lambda^{s_2+\alpha_1}\boldsymbol u}_{L^2}^2
\leq C  \aiminnorm{\Lambda^{s_2+\alpha}\boldsymbol u}_{L^2}^{\frac{2\alpha_1}{\alpha}}  \aiminnorm{\Lambda^{s_2}\boldsymbol u}_{L^2}^{2-\frac{2\alpha_1}{\alpha}}\\
&\leq \frac{\nu}{8}  \aiminnorm{\Lambda^{s_2+\alpha}\boldsymbol u}_{L^2}^2+ \frac{C}{\nu^{b'}} \aiminnorm{\Lambda^{s_2}\boldsymbol u}_{L^2}^2,
\end{split}
\end{equation}
where $b'=\alpha_1/(\alpha-\alpha_1)$. Hence, we infer from \eqref{eq3.5.1} that
\begin{equation}\label{eq3.5.3}
\frac{1}{2} \frac{\mathrm{d}}{\mathrm{d}t} \aiminnorm{\Lambda^{s_2} \boldsymbol u}_{L^2}^2 + \nu \aiminnorm{\Lambda^{s_2+\alpha} \boldsymbol u}_{L^2}^2
\leq \frac{\kappa}{4} \aiminnorm{\Lambda^{s_1+\beta}\theta}_{L^2}^2 +\frac{\nu}{4}  \aiminnorm{\Lambda^{s_2+\alpha}\boldsymbol u}_{L^2}^2+ \frac{C}{\nu^{b'}} \aiminnorm{\Lambda^{s_2}\boldsymbol u}_{L^2}^2+\frac{C}{\kappa^{a'} \nu^{\frac{1}{a'}-1}}.
\end{equation}

Summing the differential inequalities \eqref{eq3.4.6} and \eqref{eq3.5.3} together, we obtain that for $t > \mathrm{max} \{t_1^*, t_2^*, t_3^*\}$,
\begin{equation} \label{eq3.5.4}
\begin{split}
\frac{\mathrm{d}}{\mathrm{d}t} &\big(\aiminnorm{\Lambda^{s_1} \theta}_{L^2}^2+\aiminnorm{\Lambda^{s_2} \boldsymbol u}_{L^2}^2\big) + \kappa \aiminnorm{\Lambda^{s_1+\beta} \theta}_{L^2}^2  + \nu \aiminnorm{\Lambda^{s_2+\alpha} \boldsymbol u}_{L^2}^2  \\
&\leq C\left(\frac{1}{\kappa^a}\aiminnorm{\Lambda^{s_1}\theta}_{L^2}^2 + (\frac{1}{\nu^b}+\frac{1}{\nu^{b'}})\aiminnorm{\Lambda^{s_2}\boldsymbol u}_{L^2}^2\right)+
\frac{C}{\kappa}\aiminnorm{\Lambda^{s_1-\beta}f}_{L^2}^2+\frac{C}{\kappa^{a'} \nu^{\frac{1}{a'}-1}}.
\end{split}
\end{equation}
Here, starting with $s_1=s_1^{(1)} > 2\mathrm{max} \{1-\alpha, 1-\beta\}$,
$s_1^{(1)} \leq \beta $ and $s_2=s_2^{(1)} =1$, which implies that $s_2^{(1)} -s_1^{(1)} < \mathrm{min} \{2\alpha-1, 2\beta-1\} < \alpha + \beta$, then by Poincar\'e's inequality and equations
\eqref{eq3.1.3}, \eqref{eq3.4.2}, for $t > t^* := \mathrm{max} \{t_1^*, t_2^*,t_3^*\}$, we have
\begin{equation} \label{eq3.5.4.1}
\int_{t}^{t+1} \aiminnorm{\Lambda^{s_1^{(1)}} \theta}_{L^2}^2 {\mathrm{d}s} \leq C,
\qquad
\int_{t}^{t+1} \aiminnorm{\Lambda^{s_2^{(1)}} \boldsymbol u}_{L^2}^2 {\mathrm{d}s} \leq C.
\end{equation}
Hence, applying the uniform Gronwall lemma to \eqref{eq3.5.4} and using \eqref{eq3.5.4.1}, we obtain that for $t>t^* + 1$,
\begin{equation} \label{eq3.5.5}
\aiminnorm{\Lambda^{s_1^{(1)}} \theta(t)}_{L^2} \leq C,
\qquad
\aiminnorm{\Lambda^{s_2^{(1)}} \boldsymbol u(t)}_{L^2} \leq C,
\end{equation}
Furthermore, integrating \eqref{eq3.5.4} and using \eqref{eq3.5.5},  we find that for $t>t^* + 1$,
\begin{equation} \label{eq3.5.7}
\int_{t}^{t+1} \aiminnorm{\Lambda^{s_1^{(1)}+\beta} \theta}_{L^2}^2 {\mathrm{d}s} \leq C,
\qquad
\int_{t}^{t+1} \aiminnorm{\Lambda^{s_2^{(1)}+\alpha} \boldsymbol u}_{L^2}^2 {\mathrm{d}s} \leq C.
\end{equation}
Then, we iterate with $s_1=s_1^{(2)} = s_1^{(1)}+\beta$ and let $s_2=s_2^{(2)}$ be any number that satisfies
$s_2^{(2)}-s_1^{(2)} < \alpha+\beta$ and $s_2^{(1)} < s_2^{(2)} \leq s_2^{(1)}+ \alpha$. Hence, we have from \eqref{eq3.5.7} that for $t > t^*+1$,
\begin{equation} \label{eq3.5.8}
\int_{t}^{t+1} \aiminnorm{\Lambda^{s_1^{(2)}} \theta}_{L^2}^2 {\mathrm{d}s} \leq C,
\qquad
\int_{t}^{t+1} \aiminnorm{\Lambda^{s_2^{(2)}} \boldsymbol u}_{L^2}^2 {\mathrm{d}s} \leq C.
\end{equation}
Applying the uniform Gronwall lemma to \eqref{eq3.5.4} again, this time using \eqref{eq3.5.8}, we obtain for $t>t^* + 2$,
\begin{equation} \label{eq3.5.8.1}
\aiminnorm{\Lambda^{s_1^{(2)}} \theta(t)}_{L^2} \leq C,
\qquad
\aiminnorm{\Lambda^{s_2^{(2)}} \boldsymbol u(t)}_{L^2} \leq C,
\end{equation}
and
\begin{equation} \label{eq3.5.10}
\int_{t}^{t+1} \aiminnorm{\Lambda^{s_1^{(2)}+\beta} \theta}_{L^2}^2 {\mathrm{d}s} \leq C,
\qquad
\int_{t}^{t+1} \aiminnorm{\Lambda^{s_2^{(2)}+\alpha} \boldsymbol u}_{L^2}^2 {\mathrm{d}s} \leq C.
\end{equation}
Therefore, with a bootstrapping argument, for any given real numbers $s_1, s_2$ that satisfy $s_1 > 2\mathrm{max} \{1-\alpha, 1-\beta\}$, $s_2 \geq 1$ and
$s_2-s_1 < \alpha+\beta$,
\eqref{eq3.4.0.1} and \eqref{eq3.4.0.2} are proved. In addition,
for fixed $T > 0$,
\begin{equation} \label{eq3.5.11}
\int_{0}^{T} \aiminnorm{\Lambda^{s_1+\beta} \theta}_{L^2}^2 {\mathrm{d}s} < \infty,
\qquad
\int_{0}^{T} \aiminnorm{\Lambda^{s_2+\alpha} \boldsymbol u}_{L^2}^2 {\mathrm{d}s} < \infty.
\end{equation}
By \eqref{eq3.5.8.1}, given $(\theta_0, \boldsymbol u_0) \in H^{s_1} \times H^{s_2}$, then for some $t > 0$ large enough, the solution $(\theta(t), \boldsymbol u(t))$ of system \eqref{eq1.1.1} belongs to the space $H^{s_3} \times H^{s_4}$ for some $s_3=s_1+\beta$ and some $s_4$ such that $0 \leq s_4- s_3 < \alpha+\beta$ and $s_2 < s_4 \leq s_2+ \alpha$. By the Sobolev compactness embedding theorem in \cite{Tem88}, the inclusion map
$H^{s_3} \times H^{s_4} \mapsto H^{s_1} \times H^{s_2}$ is compact. Thus, for any $s_1$ and $s_2$ satisfy \eqref{cond1} and \eqref{cond2}, the solution operator $S(t)$ defined by $S(t)(\theta_0, \boldsymbol u_0)= (\theta(t), \boldsymbol u(t))$ is a compact operator in the space $H^{s_1} \times H^{s_2}$ for some $t > 0$ large enough.
\end{proof}
\section{Continuity} \label{sec4}
\subsection{Continuity with respect to $t$} \label{sec4.1}
In this section, we check that the solution operators
$\{S(t), \forall t \geq 0\}$ are continuous in the space $H^{s_1} \times H^{s_2}$ with respect to $t$. In order to achieve this, we recall the following lemma which is a particular case of a general interpolation theorem in \cite{LM72}. A proof can be found in \cite{Tem84}.

\begin{lemma} \label{lemma4.1}
Let $V$, $H$, $V'$ be three Hilbert spaces such that
$$V \subset H=H' \subset V'$$
where $H'$ is the dual space of $H$ and $V'$ is the dual space of $V$.\\
If a function $u$ belongs to $L^2(0,T; V)$ and its derivative $u'$ belongs to $L^2(0,T; V')$, then $u$ is almost everywhere equal to a function continuous from $[0,T]$ into $H$.
\end{lemma}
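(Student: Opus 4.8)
The plan is to prove Lemma~\ref{lemma4.1} by a regularization-in-time argument showing that $t\mapsto\aiminnorm{u(t)}_H^2$ agrees almost everywhere with an absolutely continuous function, from which the existence of a continuous-in-$H$ representative of $u$ follows. The starting observation is that, since $u\in L^2(0,T;V)$ and $u'\in L^2(0,T;V')$, the scalar function $t\mapsto \langle u'(t),u(t)\rangle_{V',V}$ is well defined for a.e.\ $t$ and lies in $L^1(0,T)$ by the Cauchy--Schwarz inequality, so a candidate energy identity makes sense.

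First I would extend $u$ to a function $\tilde u$ on all of $\mathbb R$ with $\tilde u\in L^2(\mathbb R;V)$ and $\tilde u'\in L^2(\mathbb R;V')$; this is done by reflecting $u$ across the endpoints $t=0$ and $t=T$ and multiplying by a smooth compactly supported cutoff, the reflection being arranged so that no singular part is created at the endpoints. Then I would mollify in time, $u_\varepsilon:=\rho_\varepsilon\ast\tilde u$ with $\rho_\varepsilon$ a standard mollifier, so that $u_\varepsilon\in C^\infty(\mathbb R;V)$, $u_\varepsilon\to u$ in $L^2(0,T;V)$, and $u_\varepsilon'\to u'$ in $L^2(0,T;V')$ as $\varepsilon\to0$. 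For the smooth approximations the elementary identity
\[
\aiminnorm{u_\varepsilon(t)-u_\delta(t)}_H^2=\aiminnorm{u_\varepsilon(s)-u_\delta(s)}_H^2+2\int_s^t\langle u_\varepsilon'(\tau)-u_\delta'(\tau),\,u_\varepsilon(\tau)-u_\delta(\tau)\rangle_{V',V}\,\mathrm{d}\tau
\]
holds for all $s,t\in[0,T]$.

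The next step is to pick a reference point $s$ in a set of full measure on which $u_\varepsilon(s)\to u(s)$ in $H$ (possible, since $u_\varepsilon\to u$ in $L^2(0,T;H)$ by the continuous embedding $V\hookrightarrow H$, hence along a subsequence pointwise a.e.); then the right-hand side above is controlled, uniformly in $t$, by $\aiminnorm{u_\varepsilon(s)-u_\delta(s)}_H^2+2\aiminnorm{u_\varepsilon'-u_\delta'}_{L^2(0,T;V')}\aiminnorm{u_\varepsilon-u_\delta}_{L^2(0,T;V)}$, which tends to $0$ as $\varepsilon,\delta\to0$. Thus $(u_\varepsilon)$ is Cauchy in $C([0,T];H)$ and converges uniformly to some $\bar u\in C([0,T];H)$; since $u_\varepsilon\to u$ also in $L^2(0,T;H)$, we conclude $\bar u=u$ a.e., which is the assertion. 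The same passage to the limit in the smooth identity (with $u_\delta$ replaced by $0$) additionally gives that $t\mapsto\aiminnorm{u(t)}_H^2$ is absolutely continuous with $\frac{\mathrm d}{\mathrm dt}\aiminnorm{u(t)}_H^2=2\langle u'(t),u(t)\rangle_{V',V}$ for a.e.\ $t$, which is the form actually invoked in the continuity estimates of Section~\ref{sec4}. The only delicate point I expect is the extension near the endpoints: the reflection must be performed so that the distributional derivative of $\tilde u$ remains an $L^2(\mathbb R;V')$ function and does not acquire a boundary contribution. Alternatively, one may simply invoke the general interpolation theorem of \cite{LM72} (or \cite{Tem84}), as the authors do.
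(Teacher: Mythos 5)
The paper does not prove this lemma at all; it simply cites \cite{LM72} and \cite{Tem84} for the proof. Your extension--mollification argument is correct and is essentially the standard proof given in those references (e.g.\ Temam, \emph{Navier--Stokes Equations}, Ch.~III, Lemma~1.2): the even reflection works because $u\in H^1(0,T;V')$ has well-defined $V'$-traces at the endpoints, so no singular part appears. The only cosmetic point is that $u_\varepsilon\to u$ in $L^2(0,T;H)$ gives pointwise a.e.\ convergence only along a subsequence, so the Cauchy property in $\mathcal C([0,T];H)$ is obtained along that subsequence (or, alternatively, one averages the identity over $s\in(0,T)$ to handle the full family); either way the continuous representative and the a.e.\ equality with $u$ follow as you state.
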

\begin{proposition} \label{prop4.1}
Under the assumptions of Theorem~\ref{thm2.1}, the solutions of
Boussinesq system \eqref{eq1.1.1} satisfy $(\theta, \boldsymbol u) \in \mathcal{C}([0,T]; H^{s_1}) \times\mathcal{C}([0,T]; H^{s_2})$.
\end{proposition}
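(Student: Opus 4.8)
The plan is to obtain the time-continuity from the interpolation Lemma~\ref{lemma4.1}, applied separately to $\theta$ and to $\boldsymbol u$ with the Gelfand triples
\[
H^{s_1+\beta}\subset H^{s_1}\subset H^{s_1-\beta},\qquad
H^{s_2+\alpha}\subset H^{s_2}\subset H^{s_2-\alpha},
\]
the middle space being identified with its dual through the $L^2(\Omega)$ pairing. By the estimates of Section~\ref{sec3} — Proposition~\ref{prop3.4} and in particular \eqref{eq3.5.11}, together with the $L^\infty$-in-time bounds obtained by applying the ordinary Gronwall Lemma~\ref{lemma2.0.1} to the differential inequality \eqref{eq3.5.4} along the bootstrap — we already have, on any finite interval $[0,T]$,
\[
\theta\in L^\infty(0,T;H^{s_1})\cap L^2(0,T;H^{s_1+\beta}),\qquad
\boldsymbol u\in L^\infty(0,T;H^{s_2})\cap L^2(0,T;H^{s_2+\alpha}).
\]
Thus it remains only to check that $\theta_t\in L^2(0,T;H^{s_1-\beta})$ and $\boldsymbol u_t\in L^2(0,T;H^{s_2-\alpha})$; Lemma~\ref{lemma4.1} then yields $\theta\in\mathcal C([0,T];H^{s_1})$ and $\boldsymbol u\in\mathcal C([0,T];H^{s_2})$, which is the assertion, and it also supplies the regularity $(\theta_t,\boldsymbol u_t)\in L^2(0,T;H^{s_1-\beta})\cap L^2(0,T;H^{s_2-\alpha})$ claimed in Theorem~\ref{thm2.0}.

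For the temperature, I would rewrite \eqref{eq1.1.1}$_3$ as $\theta_t = f - \kappa\Lambda^{2\beta}\theta - \nabla\cdot(\boldsymbol u\theta)$ and estimate each term in $H^{s_1-\beta}$. The forcing term is a time-independent constant since $f\in H^{s_1-\beta}$; the dissipative term satisfies $\|\Lambda^{2\beta}\theta\|_{H^{s_1-\beta}} = \|\Lambda^{s_1+\beta}\theta\|_{L^2}$, which is square-integrable in time by \eqref{eq3.5.11}. Because $s_1+1-\beta>0$ (a consequence of \eqref{cond1}) and $\boldsymbol u\cdot\nabla\theta$ has zero mean, $\|\nabla\cdot(\boldsymbol u\theta)\|_{H^{s_1-\beta}}\le\|\Lambda^{s_1+1-\beta}(\boldsymbol u\theta)\|_{L^2}$, and this is controlled by precisely the chain of the Kato--Ponce inequality (Lemma~\ref{lem2.4}), the 2D Sobolev embeddings, and the interpolation inequality (Lemma~\ref{lemma2.0}) used for \eqref{eq3.4.1}--\eqref{eq3.4.5}: one bounds it by a uniformly bounded factor times $\|\Lambda^{s_1+\beta}\theta\|_{L^2}^{\mu}$ and $\|\Lambda^{s_2+\alpha}\boldsymbol u\|_{L^2}^{\lambda}$ with exponents $\mu,\lambda<1$. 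Squaring and applying Hölder's inequality in time against the $L^2(0,T)$-bounds above then gives $\nabla\cdot(\boldsymbol u\theta)\in L^2(0,T;H^{s_1-\beta})$, hence $\theta_t\in L^2(0,T;H^{s_1-\beta})$.

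For the velocity, I would first apply the Leray--Helmholtz projection $\mathbb P$ onto divergence-free fields — which is bounded on $H^s(\Omega)$ for every $s$ — to \eqref{eq1.1.1}$_1$ in order to eliminate the pressure, obtaining $\boldsymbol u_t = \mathbb P(\theta\boldsymbol e_2) - \nu\Lambda^{2\alpha}\boldsymbol u - \mathbb P(\boldsymbol u\cdot\nabla\boldsymbol u)$, and then estimate in $H^{s_2-\alpha}$. The dissipative term gives $\|\Lambda^{2\alpha}\boldsymbol u\|_{H^{s_2-\alpha}} = \|\Lambda^{s_2+\alpha}\boldsymbol u\|_{L^2}\in L^2(0,T)$. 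For the buoyancy term, $\|\mathbb P(\theta\boldsymbol e_2)\|_{H^{s_2-\alpha}}\le C\|\theta\|_{H^{s_2-\alpha}}$; since $s_2-\alpha<s_1+\beta$ by \eqref{cond2}, either $s_2-\alpha\le s_1$, in which case $\|\theta\|_{H^{s_2-\alpha}}\le C\|\theta\|_{H^{s_1}}$ is uniformly bounded, or $s_1<s_2-\alpha<s_1+\beta$, in which case interpolation gives $\|\theta\|_{H^{s_2-\alpha}}\le\|\theta\|_{H^{s_1}}^{1-\rho}\|\theta\|_{H^{s_1+\beta}}^{\rho}$ with $\rho=(s_2-\alpha-s_1)/\beta<1$; in either case this term lies in $L^2(0,T)$ after squaring and using Hölder. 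The nonlinear term is treated exactly as in \eqref{eqn3.5.2.2}--\eqref{eq3.5.2}: since $s_2+1-\alpha>0$, one has $\|\mathbb P(\boldsymbol u\cdot\nabla\boldsymbol u)\|_{H^{s_2-\alpha}}\le C\|\Lambda^{s_2+1-\alpha}(\boldsymbol u\otimes\boldsymbol u)\|_{L^2}$, and the Kato--Ponce inequality, the Sobolev embedding $H^{s_2+2-2/p_{\boldsymbol u}-\alpha_1}(\Omega)\hookrightarrow H^{s_2+1-\alpha_1,\,p_{\boldsymbol u}}(\Omega)$, and interpolation between $H^{s_2}$ and $H^{s_2+\alpha}$ bound it by a uniformly bounded factor times $\|\Lambda^{s_2+\alpha}\boldsymbol u\|_{L^2}^{\sigma}$ with $\sigma<1$; squaring and Hölder in time finish this term. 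Hence $\boldsymbol u_t\in L^2(0,T;H^{s_2-\alpha})$, and Lemma~\ref{lemma4.1} completes the proof.

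I do not expect a genuine obstacle here: every nonlinear bound above is a rerun, with shifted exponents, of an estimate already carried out in the proof of Proposition~\ref{prop3.4}. The only step needing care is the bookkeeping — one must check that the Kato--Ponce Hölder pairs $(p_\theta,q_\theta)$ and $(p_{\boldsymbol u},q_{\boldsymbol u})$ and the auxiliary parameters $\beta_1,\alpha_1$ can be chosen, consistently with the admissibility ranges in Lemma~\ref{lem2.4} and with the 2D Sobolev embeddings, so that all interpolation exponents $\mu,\lambda,\rho,\sigma$ produced above are strictly less than $1$. This is precisely where \eqref{cond1} enters (through $s_1+1-\beta>0$, $s_2+1-\alpha>0$, and the embeddings $H^{s_1}\hookrightarrow L^{q_\theta}$, $H^{s_2}\hookrightarrow L^{q_{\boldsymbol u}}$) and where \eqref{cond2} enters (through $s_2-\alpha<s_1+\beta$). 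Once an exponent $\gamma<1$ is in hand, finiteness of the corresponding time integral is automatic from $\int_0^T\|\Lambda^{s_1+\beta}\theta\|_{L^2}^2\,\mathrm{d}t<\infty$ and $\int_0^T\|\Lambda^{s_2+\alpha}\boldsymbol u\|_{L^2}^2\,\mathrm{d}t<\infty$ by Hölder's inequality with exponent $1/\gamma$.
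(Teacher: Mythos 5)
Your proposal follows essentially the same route as the paper: both reduce the claim to showing $\theta_t\in L^2(0,T;H^{s_1-\beta})$ and $\boldsymbol u_t\in L^2(0,T;H^{s_2-\alpha})$ by estimating each term of the equations term-by-term (Kato--Ponce, 2D Sobolev embeddings, and the $L^2$-in-time bounds \eqref{eq3.5.11}), and then invoke Lemma~\ref{lemma4.1}. The only cosmetic differences are that the paper applies the lemma to $\Lambda^{s_1}\theta$ and $\Lambda^{s_2}\boldsymbol u$ in the triple $H^{\beta}\subset L^2\subset H^{-\beta}$ rather than in your shifted spaces, leaves the Leray projection implicit, and obtains the nonlinear bounds with exponent exactly one on the top-order norms $\aiminnorm{\Lambda^{s_1+\beta}\theta}_{L^2}$, $\aiminnorm{\Lambda^{s_2+\alpha}\boldsymbol u}_{L^2}$ (which is what the Kato--Ponce/Sobolev chain actually yields) rather than your $\mu,\lambda,\sigma<1$.
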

\begin{proof} For any fixed $T >0$, the uniform estimates proved
in Proposition \ref{prop3.4} ensure that
$$(\theta, \boldsymbol u) \in L^2(0,T; H^{s_1+\beta}) \times L^2(0,T; H^{s_2+\alpha}),$$ where $s_1$ and $s_2$ satisfy \eqref{cond1} and \eqref{cond2}.
Hence
$$(\Lambda^{s_1}\theta, \Lambda^{s_2} \boldsymbol u) \in L^2(0,T; H^{\beta}) \times L^2(0,T; H^{\alpha}).$$
Our goal is to show that $(\theta, \boldsymbol u) \in \mathcal{C}([0,T]; H^{s_1}) \times\mathcal{C}([0,T]; H^{s_2})$, that is $(\Lambda^{s_1}\theta, \Lambda^{s_2}\boldsymbol u) \in \mathcal{C}([0,T]; L^2) \times \mathcal{C}([0,T]; L^2)$,
and by Lemma \ref{lemma4.1}, it suffices to show that
$(\Lambda^{s_1}\theta_t, \Lambda^{s_2}\boldsymbol u_t) \in L^2([0,T]; H^{-\beta}) \times L^2([0,T]; H^{-\alpha})$.
For any $(\psi, \boldsymbol \varphi) \in H^{\beta} \times (H^{\alpha})^2$, we derive from \eqref{eq1.1.1} that
\begin{equation}\begin{cases}\label{eq4.1}
\aimininner{\Lambda^{s_1}\theta_t}{\psi}+ \aimininner{\Lambda^{s_1}(\boldsymbol u \cdot \nabla \theta)}{\psi} + \kappa \aimininner{\Lambda^{{s_1}+2\beta}\theta}{\psi}=
\aimininner{\Lambda^{s_1} f}{\psi}, \\
\aimininner{\Lambda^{s_2}\boldsymbol u_t}{\boldsymbol \varphi}+ \aimininner{\Lambda^{s_2}(\boldsymbol u \cdot \nabla \boldsymbol u)}{\boldsymbol \varphi} + \nu \aimininner{\Lambda^{{s_2}+2\alpha}\boldsymbol u}{\boldsymbol \varphi}=
\aimininner{\Lambda^{s_2} \theta \boldsymbol e_2}{\boldsymbol \varphi}.
\end{cases}\end{equation}
Then, by the Cauchy-Schwarz inequality,
\begin{equation}\begin{cases}\label{eq4.2}
\aiminabs{\aimininner{\Lambda^{s_1}\theta_t}{\psi}} \leq \aiminnorm{\Lambda^{{s_1}-\beta}(\boldsymbol u \cdot \nabla \theta)}_{L^2} \aiminnorm{\psi}_{H^{\beta}}
+\kappa \aiminnorm{\Lambda^{{s_1}+\beta}\theta}_{L^2} \aiminnorm{\psi}_{H^{\beta}}
+\aiminnorm{\Lambda^{{s_1}-\beta} f}_{L^2} \aiminnorm{\psi}_{H^{\beta}}, \\\aiminabs{\aimininner{\Lambda^{s_2}\boldsymbol u_t}{\boldsymbol \varphi}} \leq \aiminnorm{\Lambda^{{s_2}-\alpha}(\boldsymbol u \cdot \nabla \boldsymbol u)}_{L^2} \aiminnorm{\boldsymbol \varphi}_{H^{\alpha}}
+\nu \aiminnorm{\Lambda^{{s_2}+\alpha}\boldsymbol u}_{L^2} \aiminnorm{\boldsymbol \varphi}_{H^{\alpha}}
+\aiminnorm{\Lambda^{{s_2}-\alpha} \theta}_{L^2} \aiminnorm{\boldsymbol \varphi}_{H^{\alpha}},
\end{cases}\end{equation}
which implies that,
\begin{equation}\begin{cases} \label{eq4.3}
\aiminnorm{\Lambda^{s_1}\theta_t}_{H^{-\beta}} \leq \aiminnorm{\Lambda^{{s_1}-\beta}(\boldsymbol u \cdot \nabla \theta)}_{L^2}+\kappa \aiminnorm{\Lambda^{{s_1}+\beta}\theta}_{L^2} +\aiminnorm{\Lambda^{{s_1}-\beta} f}_{L^2}, \\
\aiminnorm{\Lambda^{s_2}\boldsymbol u_t}_{H^{-\alpha}} \leq \aiminnorm{\Lambda^{{s_2}-\alpha}(\boldsymbol u \cdot \nabla \boldsymbol u)}_{L^2}+\nu \aiminnorm{\Lambda^{{s_2}+\alpha}\boldsymbol u}_{L^2}
+\aiminnorm{\Lambda^{{s_2}-\alpha} \theta}_{L^2}.
\end{cases}\end{equation}
We now estimate the nonlinear term $\boldsymbol u \cdot \nabla \theta$ and $\boldsymbol u \cdot \nabla \boldsymbol u$ in the right-hand side of \eqref{eq4.3}. Since $\boldsymbol u$ is divergence free, we have
\begin{equation} \label{eq4.1.3}
\aiminnorm{\Lambda^{{s_1}-\beta}(\boldsymbol u \cdot \nabla \theta)}_{L^2}
=\aiminnorm{\Lambda^{{s_1}-\beta}\nabla \cdot (\boldsymbol u\theta)}_{L^2}
\leq \aiminnorm{\Lambda^{1+{s_1}-\beta} (\boldsymbol u\theta)}_{L^2}.
\end{equation}
We now choose $r>0$ such that
\begin{equation}
r=\begin{cases}
s_1, \quad & 2\text{max} \{1-\alpha, 1-\beta\} < s_1 < 1,\\
\text{any number in} \hspace{2pt} (2\mathrm{max} \{1-\alpha, 1-\beta\}, 1),&\qquad s_1\geq 1.
\end{cases}
\end{equation}
Then,
\[
\theta_0 \in H^{s_1} \subset H^r \subset L^{p_1},\qquad\boldsymbol u_0 \in H^{s_2} \subset H^1 \subset L^{p_2},
\]
where $$\frac{1}{p_1}= \frac{1-r}{2} < \text{min} \{\alpha-\frac{1}{2}, \beta-\frac{1}{2}\} \leq \frac{1}{2}, \qquad \displaystyle p_2 = \frac{2}{2\beta-1}.$$
Hence, by \eqref{eq3.1.2} and  \eqref{eq3.4.2},
$\theta \in L^{\infty}([0, +\infty); L^{p_1})$ and $\boldsymbol u \in L^{\infty}([0, +\infty); L^{p_2})$. Therefore
\begin{equation} \label{eq4.1.5}
\aiminnorm{\Lambda^{1+s_1-\beta} (\boldsymbol u\theta)}_{L^2}
\leq C\aiminnorm{\theta}_{L^{p_1}}\aiminnorm{\Lambda^{1+s_1-\beta} \boldsymbol u}_{L^{q_1}}+
C\aiminnorm{\boldsymbol u}_{L^{p_2}}\aiminnorm{\Lambda^{1+s_1-\beta} \theta}_{L^{q_2}},
\end{equation}
where $1/p_1+1/q_1=1/2$ and $1/p_2+1/q_2=1/2$. Now, we choose $q_1^*$ such that $$\displaystyle\frac{1}{q_1^*}+ \frac{s_2+\alpha-(1+s_1-\beta)}{2}= \frac{1}{2}.$$ Since $1/q_1= r/2$ and $s_2 > s_1$, we have
$$\displaystyle \frac{1}{q_1^*} \leq \frac{1}{2} (2-\alpha-\beta) \leq \frac{1}{2} \cdot 2\mathrm{max} \{1-\alpha, 1-\beta\} <  \frac{1}{2}r= \frac{1}{q_1}.$$ Hence, $q_1 < q_1^*$.
Thus, by Poincar\'e's inequality and the Sobolev embedding theorem,
\begin{equation} \label{eq4.1.6}
\aiminnorm{\Lambda^{1+s_1-\beta} \boldsymbol u}_{L^{q_1}}
\leq C\aiminnorm{\Lambda^{1+s_1-\beta} \boldsymbol u}_{L^{q_1^*}}
\leq C\aiminnorm{\Lambda^{s_2+\alpha} \boldsymbol u}_{L^2}.
\end{equation}
We also deduce from $1/q_2=1/2-1/p_2=1-\beta$ that
$$\displaystyle \frac{1}{q_2}+ \frac{s_1+\beta-(1+s_1-\beta)}{2}= \frac{1}{2},$$ and hence by the Sobolev embedding theorem,
\begin{equation} \label{eq4.1.7}
\aiminnorm{\Lambda^{1+s_1-\beta} \theta}_{L^{q_2}}
\leq C\aiminnorm{\Lambda^{s_1+\beta} \theta}_{L^2}.
\end{equation}
Hence, by \eqref{eq4.2}, \eqref{eq4.1.3}, \eqref{eq4.1.5},\eqref{eq4.1.6}, \eqref{eq4.1.7}, we have
$$\aiminnorm{\Lambda^{s_1} \theta_t}_{H^{-\beta}}
\leq C\aiminnorm{\theta}_{L^{p_1}}\aiminnorm{\Lambda^{s_2+\alpha} \boldsymbol u}_{L^2}
+ C(\aiminnorm{\boldsymbol u}_{L^{p_2}}+\kappa)\aiminnorm{\Lambda^{s_1+\beta} \theta}_{L^2} + C\aiminnorm{\Lambda^{s_1-\beta} f}_{L^2},
$$
which by utilizing the estimates \eqref{eq3.1.2}, \eqref{eq3.4.2} and \eqref{eq3.5.11}, shows
$$\int_{0}^T \aiminnorm{\Lambda^{s_1}\theta_t}_{H^{-\beta}}^2 {\mathrm{d}s} < \infty.$$
Similarly for the term $\boldsymbol u \cdot \nabla \boldsymbol u$, we similarly have
\begin{equation} \label{eq4.1.4}
\begin{split}
\aiminnorm{\Lambda^{{s_2}-\alpha}(\boldsymbol u \cdot \nabla \boldsymbol u)}_{L^2}
=\aiminnorm{\Lambda^{{s_2}-\alpha}\nabla \cdot (\boldsymbol u \otimes \boldsymbol u)}_{L^2}
&\leq \aiminnorm{\Lambda^{1+{s_2}-\alpha} (\boldsymbol u \otimes \boldsymbol u)}_{L^2} \\
& \leq C \aiminnorm{\boldsymbol u}_{L^{p_3}} \aiminnorm{\Lambda^{1+{s_2}-\alpha} \boldsymbol u}_{L^{q_3}},
\end{split}
\end{equation}
where $1/p_3+1/q_3=1/2$ and $q_3= 1/(1-\alpha)$.
By \eqref{eq3.4.2},
we know that $\boldsymbol u \in L^{\infty}(0, +\infty; L^{p_3})$ for $p_3 = 2/(2\alpha-1)$. Since
$$\displaystyle \frac{1}{q_3}+ \frac{s_2+\alpha-(1+s_2-\alpha)}{2}= \frac{1}{2},$$ then the Sobolev embedding theorem shows that
\begin{equation} \label{eq4.1.8}
\aiminnorm{\Lambda^{1+{s_2}-\alpha} \boldsymbol u}_{L^{q_3}}
\leq C\aiminnorm{\Lambda^{{s_2}+\alpha} \boldsymbol u}_{L^2}.
\end{equation}
Therefore, by \eqref{eq4.3}, \eqref{eq4.1.4}, \eqref{eq4.1.8} and the assumption that $0 \leq s_2-s_1 < \alpha+\beta$, we find
\begin{equation} \label{eq4.1.9}
\aiminnorm{\Lambda^{s_2}\boldsymbol u_t}_{H^{-\alpha}} \leq
C(\aiminnorm{\boldsymbol u}_{L^{p_3}}+\nu) \aiminnorm{\Lambda^{{s_2}+\alpha} \boldsymbol u}_{L^2}+ C\aiminnorm{\Lambda^{{s_1}+\beta} \theta}_{L^2},
\end{equation}
which by combining the estimates \eqref{eq3.4.2} and \eqref{eq3.5.11}, yields
$$\int_{0}^T \aiminnorm{\Lambda^{s_2}\boldsymbol u_t}_{H^{-\alpha}}^2 {\mathrm{d}s} < \infty. $$
We thus completed the proof of Proposition 4.1.
\end{proof}

\subsection{Continuity for fixed $t>0$} \label{sec4.2}
We simultaneously prove uniqueness and continuity of $S(t)$ from $H^{s_1} \times H^{s_2}$ to itself for any fixed $t>0$.
\begin{proposition} \label{prop4.1}
Under the assumptions of Theorem~\ref{thm2.1}, the solution of
Boussinesq system \eqref{eq1.1.1} is unique and the solution operator $S(t): H^{s_1} \times H^{s_2} \mapsto H^{s_1} \times H^{s_2}$ is continuous for any fixed $t>0$.
\end{proposition}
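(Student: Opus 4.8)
The plan is to compare two strong solutions in the $H^{s_1}\times H^{s_2}$ norm, running the same kind of energy estimate as in the proof of Proposition~\ref{prop3.4} but on the difference system. Let $(\theta^{(1)},\boldsymbol u^{(1)})$ and $(\theta^{(2)},\boldsymbol u^{(2)})$ be two strong solutions of \eqref{eq1.1.1} on $[0,T]$ with initial data in $H^{s_1}\times H^{s_2}$, and set $\bar\theta=\theta^{(1)}-\theta^{(2)}$, $\bar{\boldsymbol u}=\boldsymbol u^{(1)}-\boldsymbol u^{(2)}$, $\bar\pi=\pi^{(1)}-\pi^{(2)}$. Subtracting the two copies of \eqref{eq1.1.1} yields
\[
\begin{cases}
\partial_t\bar{\boldsymbol u} + \boldsymbol u^{(1)}\cdot\nabla\bar{\boldsymbol u} + \bar{\boldsymbol u}\cdot\nabla\boldsymbol u^{(2)} + \nu(-\Delta)^\alpha\bar{\boldsymbol u} = -\nabla\bar\pi + \bar\theta\,\boldsymbol e_2,\\
\nabla\cdot\bar{\boldsymbol u}=0,\\
\partial_t\bar\theta + \boldsymbol u^{(1)}\cdot\nabla\bar\theta + \bar{\boldsymbol u}\cdot\nabla\theta^{(2)} + \kappa(-\Delta)^\beta\bar\theta = 0.
\end{cases}
\]
First I would take the $L^2$ inner product of the $\bar\theta$-equation with $\Lambda^{2s_1}\bar\theta$ and of the $\bar{\boldsymbol u}$-equation with $\Lambda^{2s_2}\bar{\boldsymbol u}$; since $\bar{\boldsymbol u}$ is divergence free the pressure term drops, leaving $\frac12\frac{\mathrm{d}}{\mathrm{d}t}\|\Lambda^{s_1}\bar\theta\|_{L^2}^2+\kappa\|\Lambda^{s_1+\beta}\bar\theta\|_{L^2}^2$ (resp.\ $\frac12\frac{\mathrm{d}}{\mathrm{d}t}\|\Lambda^{s_2}\bar{\boldsymbol u}\|_{L^2}^2+\nu\|\Lambda^{s_2+\alpha}\bar{\boldsymbol u}\|_{L^2}^2$) equal to a sum of transport terms, plus for the velocity the buoyancy term $\langle\bar\theta\,\boldsymbol e_2,\Lambda^{2s_2}\bar{\boldsymbol u}\rangle$. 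This last one is handled exactly as in \eqref{eq3.5.1.1}: write it as $\langle\Lambda^{s_1+\beta}\bar\theta\,\boldsymbol e_2,\Lambda^{2s_2-s_1-\beta}\bar{\boldsymbol u}\rangle$, interpolate $\Lambda^{2s_2-s_1-\beta}\bar{\boldsymbol u}$ between $\bar{\boldsymbol u}$ and $\Lambda^{s_2+\alpha}\bar{\boldsymbol u}$ by Lemma~\ref{lemma2.0} (admissible because $0\le s_2-s_1<\alpha+\beta$ by \eqref{cond2}), and apply Young's and Poincar\'e's inequalities.

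For the two ``transport by the full velocity'' terms $\langle\boldsymbol u^{(1)}\cdot\nabla\bar\theta,\Lambda^{2s_1}\bar\theta\rangle$ and $\langle\boldsymbol u^{(1)}\cdot\nabla\bar{\boldsymbol u},\Lambda^{2s_2}\bar{\boldsymbol u}\rangle$ I would use $\nabla\cdot\boldsymbol u^{(1)}=0$ to peel off a commutator as in Lemma~\ref{lem2.5} (the surviving zeroth-order piece $\langle\boldsymbol u^{(1)}\cdot\nabla\Lambda^{s_1}\bar\theta,\Lambda^{s_1}\bar\theta\rangle$ vanishes), or equivalently rewrite $\boldsymbol u^{(1)}\cdot\nabla\bar\theta=\nabla\cdot(\boldsymbol u^{(1)}\bar\theta)$ and invoke the Kato--Ponce inequality of Lemma~\ref{lem2.4}; in either case the derivatives are distributed so that one factor is absorbed in $L^2$ by $\|\Lambda^{s_1+\beta}\bar\theta\|_{L^2}$ (resp.\ $\|\Lambda^{s_2+\alpha}\bar{\boldsymbol u}\|_{L^2}$) through the same Sobolev embeddings used in \eqref{eq3.4.4}--\eqref{eq3.5.2}, and the $L^p$-factors are closed by the uniform $L^p$-bounds \eqref{eq3.4.2} on $\boldsymbol u^{(i)}$ and the analogue of \eqref{eq3.1.2} on $\theta^{(i)}$, which are at our disposal precisely because $s_2\ge1$ and $\alpha+\beta>1$ (conditions \eqref{cond1} and \eqref{eq1.2}). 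The remaining ``difference velocity against the gradient of a full solution'' terms $\langle\bar{\boldsymbol u}\cdot\nabla\theta^{(2)},\Lambda^{2s_1}\bar\theta\rangle$ and $\langle\bar{\boldsymbol u}\cdot\nabla\boldsymbol u^{(2)},\Lambda^{2s_2}\bar{\boldsymbol u}\rangle$ are rewritten as $\langle\nabla\cdot(\bar{\boldsymbol u}\,\theta^{(2)}),\Lambda^{2s_1}\bar\theta\rangle$ and $\langle\nabla\cdot(\bar{\boldsymbol u}\otimes\boldsymbol u^{(2)}),\Lambda^{2s_2}\bar{\boldsymbol u}\rangle$ and treated by the same Kato--Ponce/interpolation scheme, this time placing the ``extra'' regularity on $\theta^{(2)}$ or $\boldsymbol u^{(2)}$, which is legitimate since $\theta^{(2)}\in L^2(0,T;H^{s_1+\beta})$ and $\boldsymbol u^{(2)}\in L^2(0,T;H^{s_2+\alpha})$ by \eqref{eq3.5.11}. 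Using Young's inequality to absorb all fractional-derivative norms of $\bar\theta$ and $\bar{\boldsymbol u}$ into $\frac{\kappa}{2}\|\Lambda^{s_1+\beta}\bar\theta\|_{L^2}^2+\frac{\nu}{2}\|\Lambda^{s_2+\alpha}\bar{\boldsymbol u}\|_{L^2}^2$ on the left, one arrives at
\[
\frac{\mathrm{d}}{\mathrm{d}t}\Big(\|\Lambda^{s_1}\bar\theta\|_{L^2}^2+\|\Lambda^{s_2}\bar{\boldsymbol u}\|_{L^2}^2\Big)\le G(t)\Big(\|\Lambda^{s_1}\bar\theta\|_{L^2}^2+\|\Lambda^{s_2}\bar{\boldsymbol u}\|_{L^2}^2\Big),
\]
where $G$ is a combination of powers of $\|\Lambda^{s_2+\alpha}\boldsymbol u^{(i)}\|_{L^2}$, $\|\Lambda^{s_1+\beta}\theta^{(i)}\|_{L^2}$ and uniformly bounded $L^p$-norms, so that $G\in L^1(0,T)$, with $L^1$-norm controlled by $T$ and by any bound for the two initial data, thanks to Proposition~\ref{prop3.4} and \eqref{eq3.5.11}.

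To conclude, the Gronwall Lemma~\ref{lemma2.0.1} applied to the above gives, for $0\le t\le T$,
\[
\|\bar\theta(t)\|_{H^{s_1}}^2+\|\bar{\boldsymbol u}(t)\|_{H^{s_2}}^2\le\Big(\|\bar\theta(0)\|_{H^{s_1}}^2+\|\bar{\boldsymbol u}(0)\|_{H^{s_2}}^2\Big)\exp\!\Big(\int_0^T G(s)\,\mathrm{d}s\Big).
\]
Choosing the two initial data equal forces $\bar\theta\equiv\bar{\boldsymbol u}\equiv0$, which is uniqueness of the strong solution; and since $\exp\big(\int_0^T G\big)$ stays bounded when the initial data range over a bounded subset of $H^{s_1}\times H^{s_2}$, the same inequality shows $S(t)$ is Lipschitz on bounded sets of $H^{s_1}\times H^{s_2}$, hence continuous, for each fixed $t\in[0,T]$; $T$ being arbitrary, this yields continuity for every fixed $t>0$. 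The step I expect to be the main obstacle is the transport term $\langle\boldsymbol u^{(1)}\cdot\nabla\bar\theta,\Lambda^{2s_1}\bar\theta\rangle$ together with its velocity analogue: $\bar\theta$ is only controlled in $H^{s_1+\beta}$ with $s_1+\beta$ possibly below $1$, so no full derivative can be placed on $\bar\theta$, and the commutator/Kato--Ponce splitting must be arranged so that every intermediate Sobolev exponent stays admissible — which is exactly what $s_2\ge1$ (giving $\boldsymbol u^{(1)}\in L^\infty(0,T;L^p)$ for all $p<\infty$), $\alpha+\beta>1$, and $s_1>2\max\{1-\alpha,1-\beta\}$ were tuned to guarantee in Section~\ref{sec3}. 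A secondary technical point is that the formal energy computations should be justified on Galerkin approximations, or by a Lions--Magenes argument as in Section~\ref{sec4.1} using $\partial_t\bar\theta\in L^2(0,T;H^{s_1-\beta})$ and $\partial_t\bar{\boldsymbol u}\in L^2(0,T;H^{s_2-\alpha})$.
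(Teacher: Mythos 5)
Your proposal is correct and follows essentially the same route as the paper: write the difference system, perform the $H^{s_1}\times H^{s_2}$ energy estimate, treat the buoyancy term as in \eqref{eq3.5.1.1}, handle the transport-by-$\boldsymbol u^{(1)}$ terms via the commutator estimate of Lemma~\ref{lem2.5} and the remaining bilinear terms via Lemma~\ref{lem2.4}, and close with Gronwall using the $L^2_t$ integrability of $\|\Lambda^{s_2+\alpha}\boldsymbol u^{(i)}\|_{L^2}$ and $\|\Lambda^{s_1+\beta}\theta^{(2)}\|_{L^2}$ from \eqref{eq3.5.11}. The only detail the paper adds that you elide is a case split ($s_1\geq\beta$ versus $s_1<\beta$) in estimating $\langle\bar{\boldsymbol u}\cdot\nabla\theta^{(2)},\Lambda^{2s_1}\bar\theta\rangle$, which is a minor point within the same scheme.
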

\begin{proof}
Let $(\boldsymbol u_1, \theta_1, \pi_1)$, $(\boldsymbol u_2, \theta_2, \pi_2)$ satisfy \eqref{eq1.1.1} with two initial data  $(\boldsymbol u_1^0, \theta_1^0)$,  $(\boldsymbol u_1^0, \theta_1^0)$ respectively.
Then $\boldsymbol \zeta = \boldsymbol u_1 - \boldsymbol u_2$, $\eta = \theta_1-\theta_2$, $\pi=\pi_1-\pi_2$ satisfy
\begin{equation}\begin{cases}\label{eq4.2.3}
\partial_t\boldsymbol \zeta + \boldsymbol u_1\cdot \nabla \boldsymbol \zeta + \boldsymbol \zeta \cdot \nabla \boldsymbol u_2
 + \nu(-\Delta)^\alpha \boldsymbol \zeta= - \nabla \pi + \eta \boldsymbol e_2, \\
\partial_t \eta + \boldsymbol u_1\cdot \nabla \eta + \boldsymbol \zeta \cdot \nabla \theta_2+ \kappa(-\Delta)^\beta \eta= 0.
\end{cases}\end{equation}
Taking the inner product of \eqref{eq4.2.3} with $(\Lambda^{2s_2} \boldsymbol \zeta,\Lambda^{2s_1} \eta)$ in $L^2$, and using $\nabla \cdot \Lambda^{2s_1} \boldsymbol \zeta=0$, which comes from the fact that $\Lambda^s$ and $\nabla$ commute with each other and $\boldsymbol \zeta$ is divergence free, we obtain that
\begin{equation}\begin{cases}\label{eq4.2.19}
\frac{1}{2} \frac{\mathrm{d}}{\mathrm{d}t} \aiminnorm{\Lambda^{s_2}\boldsymbol \zeta}_{L^2}^2 + \nu \aiminnorm{\Lambda^{s_2+\alpha} \boldsymbol \zeta}_{L^2}^2
= \aimininner{\eta \boldsymbol e_2}{\Lambda^{2s_2} \boldsymbol \zeta} -\aimininner{\boldsymbol u_1 \cdot \nabla \boldsymbol \zeta}{\Lambda^{2s_2}\boldsymbol \zeta}-\aimininner{\boldsymbol \zeta \cdot \nabla \boldsymbol u_2}{\Lambda^{2s_2}\boldsymbol \zeta}, \\
\frac{1}{2} \frac{\mathrm{d}}{\mathrm{d}t} \aiminnorm{\Lambda^{s_1}\eta}_{L^2}^2 + \kappa \aiminnorm{\Lambda^{s_1+\beta} \eta}_{L^2}^2
= -\aimininner{\boldsymbol u_1 \cdot \nabla \eta}{\Lambda^{2s_1}\eta}-\aimininner{\boldsymbol \zeta \cdot \nabla \theta_2}{\Lambda^{2s_1}\eta}.
\end{cases}\end{equation}
Similar to \eqref{eq3.5.1.1}, using Poincar\'e's inequality, the interpolation inequality and Young's inequality, we have
\begin{equation} \label{eq4.2.21}
\begin{split}
\aiminabs{\aimininner{\eta \boldsymbol e_2}{\Lambda^{2s_2} \boldsymbol \zeta}}
&\leq \frac{\kappa}{6} \aiminnorm{\Lambda^{s_1+\beta}\eta}_{L^2}^2
+ \frac{\nu}{6} \aiminnorm{\Lambda^{s_2+\alpha}\boldsymbol \zeta}_{L^2}^2
+ \frac{C}{\kappa^{a'} \nu^{\frac{1}{a'}-1}} \aiminnorm{\boldsymbol \zeta}_{L^2}^2 \\
& \leq \frac{\kappa}{6} \aiminnorm{\Lambda^{s_1+\beta}\eta}_{L^2}^2
+ \frac{\nu}{6} \aiminnorm{\Lambda^{s_2+\alpha}\boldsymbol \zeta}_{L^2}^2
+ \frac{C}{\kappa^{a'} \nu^{\frac{1}{a'}-1}} \aiminnorm{\Lambda^{s_2}\boldsymbol \zeta}_{L^2}^2,
\end{split}
\end{equation}
To deal with the term $\aimininner{\boldsymbol u_1 \cdot \nabla \boldsymbol \zeta}{\Lambda^{2s_2}\boldsymbol \zeta}$,
we observe that $\aimininner{\boldsymbol u_1 \cdot \nabla (\Lambda^{s_2}\boldsymbol \zeta)}{\Lambda^{s_2}\boldsymbol \zeta}=0$, so that
\begin{equation} \label{eq4.2.20.6}
\aimininner{\boldsymbol u_1 \cdot \nabla \boldsymbol \zeta}{\Lambda^{2s_2}\boldsymbol \zeta}
= \aimininner{\Lambda^{s_2}(\boldsymbol u_1 \cdot \nabla \boldsymbol \zeta)}{\Lambda^{s_2}\boldsymbol \zeta}
= \aimininner{\Lambda^{s_2}(\boldsymbol u_1 \cdot \nabla \boldsymbol \zeta)-\boldsymbol u_1 \cdot \nabla (\Lambda^{s_2}\boldsymbol \zeta)}{\Lambda^{s_2}\boldsymbol \zeta}.
\end{equation}
Noticing that $\nabla$ and $\Lambda$ commute, we find
\begin{equation}\label{eq4.2.20.1}
\begin{split}
&\aiminabs{\aimininner{\Lambda^{s_2}(\boldsymbol u_1 \cdot \nabla \boldsymbol \zeta)-\boldsymbol u_1 \cdot \nabla (\Lambda^{s_2}\boldsymbol \zeta)}{\Lambda^{s_2}\boldsymbol \zeta}}
=\aiminabs{\aimininner{\Lambda^{s_2}(\boldsymbol u_1 \cdot \nabla \boldsymbol \zeta)-\boldsymbol u_1 \cdot (\Lambda^{s_2}\nabla \boldsymbol \zeta)}{\Lambda^{s_2}\boldsymbol
\zeta}} \\
&\hspace{150pt} \leq C\aiminnorm{\Lambda^{s_2}(\boldsymbol u_1 \cdot \nabla \boldsymbol \zeta)-\boldsymbol u_1 \cdot (\Lambda^{s_2}\nabla \boldsymbol \zeta)}_{L^2}
\aiminnorm{\Lambda^{s_2}\boldsymbol \zeta}_{L^2}.
\end{split}
\end{equation}
Applying Lemma~\ref{lem2.5} for
\begin{equation} \label{eq4.2.20.0}
p_1, p_2, q_1, q_2 >2  \hspace{10pt} \text{and} \hspace{10pt}
\frac{1}{p_1}+\frac{1}{p_2}= \frac{1}{q_1}+\frac{1}{q_2}=\frac{1}{2},
\end{equation}
we have
\begin{equation}\label{eq4.2.20.2}
\begin{split}
\aiminnorm{\Lambda^{s_2}(\boldsymbol u_1 \cdot \nabla \boldsymbol \zeta)-\boldsymbol u_1 \cdot (\Lambda^{s_2}\nabla \boldsymbol \zeta)}_{L^2}
& \leq C(\aiminnorm{\nabla \boldsymbol u_1}_{L^{p_1}}\aiminnorm{\Lambda^{s_2} \boldsymbol \zeta}_{L^{p_2}}+\aiminnorm{\Lambda^{s_2}\boldsymbol u_1}_{L^{q_1}}\aiminnorm{\nabla \boldsymbol \zeta}_{L^{q_2}}) \\
& \leq C(\aiminnorm{\Lambda \boldsymbol u_1}_{L^{p_1}}\aiminnorm{\Lambda^{s_2} \boldsymbol \zeta}_{L^{p_2}}+\aiminnorm{\Lambda^{s_2}\boldsymbol u_1}_{L^{q_1}}\aiminnorm{\Lambda \boldsymbol \zeta}_{L^{q_2}}).
\end{split}
\end{equation}
Let
$$ p_1=\frac{2}{\alpha}, \qquad p_2=\frac{2}{1-\alpha}, \qquad q_1=\frac{2}{1-\alpha}, \qquad q_2=\frac{2}{\alpha}.$$ Then, using the Sobolev embedding inequalities, we obtain
\[
\aiminnorm{\Lambda \boldsymbol u_1}_{L^{p_1}}
\leq C\aiminnorm{\Lambda^{2-\alpha} \boldsymbol u_1}_{L^2}
\leq C\aiminnorm{\Lambda^{s_2+\alpha} \boldsymbol u_1}_{L^2},
\]

\[
\aiminnorm{\Lambda^{s_2} \boldsymbol \zeta }_{L^{p_2}}
\leq C\aiminnorm{\Lambda^{s_2+\alpha} \boldsymbol \zeta }_{L^2},
\]

\[
\aiminnorm{\Lambda^{s_2} \boldsymbol u_1 }_{L^{q_1}}
\leq C\aiminnorm{\Lambda^{s_2+\alpha} \boldsymbol u_1}_{L^2},
\]
and
\[
\aiminnorm{\Lambda \boldsymbol \zeta}_{L^{q_2}}
\leq C\aiminnorm{\Lambda^{2-\alpha} \boldsymbol \zeta }_{L^2}
\leq C\aiminnorm{\Lambda^{s_2+\alpha} \boldsymbol \zeta }_{L^2}.
\]
Then,
\begin{equation}\label{eq4.2.20.3}
\aiminnorm{\Lambda^{s_2}(\boldsymbol u_1 \cdot \nabla \boldsymbol \zeta)-\boldsymbol u_1 \cdot (\Lambda^{s_2}\nabla \boldsymbol \zeta)}_{L^2}
\leq C\aiminnorm{\Lambda^{s_2+\alpha} \boldsymbol u_1}_{L^2}\aiminnorm{\Lambda^{s_2+\alpha} \boldsymbol \zeta }_{L^2}.
\end{equation}
Hence, using \eqref{eq4.2.20.6}-\eqref{eq4.2.20.1} and Young's inequality, we have
\begin{equation}\label{eq4.2.20.4}
\begin{split}
\aiminabs{\aimininner{\boldsymbol u_1 \cdot \nabla \zeta}{\Lambda^{2s_2}\zeta}}
&\leq C\aiminnorm{\Lambda^{s_2+\alpha} \boldsymbol u_1}_{L^2}\aiminnorm{\Lambda^{s_2+\alpha} \boldsymbol \zeta }_{L^2}\aiminnorm{\Lambda^{s_2}\boldsymbol \zeta}_{L^2} \\
&\leq \frac{C}{\nu}\aiminnorm{\Lambda^{s_2+\alpha} \boldsymbol u_1}_{L^2}^2\aiminnorm{\Lambda^{s_2}\boldsymbol \zeta}_{L^2}^2+ \frac{\nu}{6}\aiminnorm{\Lambda^{s_2+\alpha} \boldsymbol \zeta }_{L^2}^2.
\end{split}
\end{equation}
By the Cauchy-Schwarz inequality, we have
\begin{equation}
\begin{split}
\aiminabs{\aimininner{\boldsymbol \zeta \cdot \nabla \boldsymbol u_2}{\Lambda^{2s_2}\boldsymbol \zeta}} &= \aiminabs{\aimininner{\Lambda^{s_2-\alpha}(\boldsymbol \zeta \cdot \nabla \boldsymbol u_2)}{\Lambda^{s_2+\alpha}\boldsymbol \zeta}}\\
&\leq \frac{C}{\nu}\aiminnorm{\Lambda^{s_2-\alpha}(\boldsymbol \zeta \cdot \nabla \boldsymbol u_2)}_{L^2}^2+\frac{\nu}{6} \aiminnorm{\Lambda^{s_2+\alpha}\boldsymbol \zeta}_{L^2}^2.
\end{split}
\end{equation}
Again we apply Lemma~\ref{lem2.4} for $p_1$, $p_2$, $q_1$, $q_2$ satisfying \eqref{eq4.2.20.0} to obtain
\begin{equation}
\aiminnorm{\Lambda^{s_2-\alpha}(\boldsymbol \zeta \cdot \nabla \boldsymbol u_2)}_{L^2}
\leq C(\aiminnorm{\Lambda^{s_2-\alpha} \boldsymbol \zeta }_{L^{p_1}}\aiminnorm{\Lambda \boldsymbol u_2}_{L^{p_2}}+\aiminnorm{\boldsymbol \zeta}_{L^{q_1}}\aiminnorm{\Lambda^{s_2-\alpha+1}\boldsymbol u_2}_{L^{q_2}}).
\end{equation}
Let $$p_1=\frac{2}{1-\alpha}, \qquad p_2=\frac{2}{\alpha}, \qquad q_1=\frac{2}{2\alpha-1}, \qquad q_2=\frac{1}{1-\alpha}.$$ Then, using the Sobolev embedding inequalities, we have
\[
\aiminnorm{\Lambda^{s_2-\alpha} \boldsymbol \zeta }_{L^{p_1}}
\leq C\aiminnorm{\Lambda^{s_2} \boldsymbol \zeta }_{L^2},
\]

\[
\aiminnorm{\Lambda \boldsymbol u_2 }_{L^{p_2}}
\leq C\aiminnorm{\Lambda^{2-\alpha} \boldsymbol u_2 }_{L^2}
\leq C\aiminnorm{\Lambda^{s_2+\alpha} \boldsymbol u_2 }_{L^2},
\]

\[
\aiminnorm{\boldsymbol \zeta}_{L^{q_1}}
\leq C\aiminnorm{\Lambda^{2-2\alpha} \boldsymbol \zeta}_{L^2}
\leq C\aiminnorm{\Lambda^{s_2} \boldsymbol \zeta}_{L^2},
\]
and
\[
\aiminnorm{\Lambda^{s_2-\alpha+1} \boldsymbol u_2}_{L^{q_2}}
\leq C\aiminnorm{\Lambda^{s_2+\alpha} \boldsymbol u_2}_{L^2}.
\]
Then, we have
\begin{equation}\label{eq4.2.22}
\aiminnorm{\Lambda^{s_2-\alpha}(\boldsymbol \zeta \cdot \nabla \boldsymbol u_2)}_{L^2}^2
\leq C\aiminnorm{\Lambda^{s_2} \boldsymbol \zeta }_{L^2}^2\aiminnorm{\Lambda^{s_2+\alpha} \boldsymbol u_2 }_{L^2}^2,
\end{equation}
and hence,
\begin{equation}\label{eq4.2.25}
\aiminabs{\aimininner{\boldsymbol \zeta \cdot \nabla \boldsymbol u_2}{\Lambda^{2s_2}\boldsymbol \zeta}} \leq \frac{C}{\nu}\aiminnorm{\Lambda^{s_2} \boldsymbol \zeta }_{L^2}^2\aiminnorm{\Lambda^{s_2+\alpha} \boldsymbol u_2 }_{L^2}^2+\frac{\nu}{6} \aiminnorm{\Lambda^{s_2+\alpha}\boldsymbol \zeta}_{L^2}^2.
\end{equation}
Next, we estimate the term $\aimininner{\boldsymbol u_1 \cdot \nabla \eta}{\Lambda^{2s_1}\eta}$. Since $\aimininner{\boldsymbol u_1 \cdot \nabla (\Lambda^{s_1}\eta)}{\Lambda^{s_1}\eta}=0$, we have\begin{equation} \label{eq4.2.25.1}
\aimininner{\boldsymbol u_1 \cdot \nabla \eta}{\Lambda^{2s_1}\eta}
= \aimininner{\Lambda^{s_1}(\boldsymbol u_1 \cdot \nabla \eta)}{\Lambda^{s_1}\eta}
= \aimininner{\Lambda^{s_1}(\boldsymbol u_1 \cdot \nabla \eta)-\boldsymbol u_1 \cdot \nabla (\Lambda^{s_1}\eta)}{\Lambda^{s_1}\eta}.
\end{equation}
Again, since $\nabla$ and $\Lambda$ commute, we have
\begin{equation}\label{eq4.2.24.1}
\begin{split}
&\aiminabs{\aimininner{\Lambda^{s_1}(\boldsymbol u_1 \cdot \nabla \eta)-\boldsymbol u_1 \cdot \nabla (\Lambda^{s_1}\eta)}{\Lambda^{s_1}\eta}}
=\aiminabs{\aimininner{\Lambda^{s_1}(\boldsymbol u_1 \cdot \nabla \eta)-\boldsymbol u_1 \cdot (\Lambda^{s_1}\nabla \eta)}{\Lambda^{s_1}\eta}} \\
&\hspace{150pt} \leq C\aiminnorm{\Lambda^{s_1}(\boldsymbol u_1 \cdot \nabla \eta)-\boldsymbol u_1 \cdot (\Lambda^{s_1}\nabla \eta)}_{L^2}
\aiminnorm{\Lambda^{s_1}\eta}_{L^2}.
\end{split}
\end{equation}
Applying Lemma~\ref{lem2.5} for $p_1$, $p_2$, $q_1$, $q_2$ satisfying \eqref{eq4.2.20.0},
we have
\begin{equation}\label{eq4.2.24.2}
\begin{split}
\aiminnorm{\Lambda^{s_1}(\boldsymbol u_1 \cdot \nabla \eta)-\boldsymbol u_1 \cdot (\Lambda^{s_1}\nabla \eta)}_{L^2}
& \leq C(\aiminnorm{\nabla \boldsymbol u_1}_{L^{p_1}}\aiminnorm{\Lambda^{s_1} \eta}_{L^{p_2}}+\aiminnorm{\Lambda^{s_1}\boldsymbol u_1}_{L^{q_1}}\aiminnorm{\nabla \eta}_{L^{q_2}}) \\
& \leq C(\aiminnorm{\Lambda \boldsymbol u_1}_{L^{p_1}}\aiminnorm{\Lambda^{s_1} \eta}_{L^{p_2}}+\aiminnorm{\Lambda^{s_1}\boldsymbol u_1}_{L^{q_1}}\aiminnorm{\Lambda \eta}_{L^{q_2}}).
\end{split}
\end{equation}
We set $p_1=2/\beta$ and $p_2=2/(1-\beta)$. Then, using the Sobolev embedding inequalities, we obtain
\[
\aiminnorm{\Lambda \boldsymbol u_1}_{L^{p_1}}
\leq C\aiminnorm{\Lambda^{2-\beta} \boldsymbol u_1}_{L^2}
\leq C\aiminnorm{\Lambda^{s_2+\alpha} \boldsymbol u_1}_{L^2},
\]
and
\[
\aiminnorm{\Lambda^{s_1} \eta }_{L^{p_2}}
\leq C\aiminnorm{\Lambda^{s_1+\beta} \eta }_{L^2}.
\]
Let $\gamma = \mathrm{min}\{\alpha, \beta \}$, $q_1=2/(1-\gamma)$ and $q_2=2/\gamma$. Since $s_1 > 2\mathrm{max} \{1-\alpha, 1-\beta\}$, then $s_1 > 2(1-\gamma)$. Thus,
we have $2-\gamma \leq s_1+\gamma$, and
\[
\aiminnorm{\Lambda^{s_1} \boldsymbol u_1 }_{L^{q_1}}
\leq C\aiminnorm{\Lambda^{s_1+\gamma} \boldsymbol u_1}_{L^2}
\leq C\aiminnorm{\Lambda^{s_1+\alpha} \boldsymbol u_1}_{L^2}
\leq C\aiminnorm{\Lambda^{s_2+\alpha} \boldsymbol u_1}_{L^2},
\]

\[
\aiminnorm{\Lambda \eta}_{L^{q_2}}
\leq C\aiminnorm{\Lambda^{2-\gamma} \eta }_{L^2}
\leq C\aiminnorm{\Lambda^{s_1+\gamma} \eta }_{L^2}
\leq C\aiminnorm{\Lambda^{s_1+\beta} \eta }_{L^2}.
\]
Hence,
\begin{equation}\label{eq4.2.20.3}
\aiminnorm{\Lambda^{s_1}(\boldsymbol u_1 \cdot \nabla \eta)-\boldsymbol u_1 \cdot (\Lambda^{s_1}\nabla \eta)}_{L^2}
\leq C\aiminnorm{\Lambda^{s_2+\alpha} \boldsymbol u_1}_{L^2}\aiminnorm{\Lambda^{s_1+\beta} \eta }_{L^2}.
\end{equation}
Therefore, using \eqref{eq4.2.25.1}-\eqref{eq4.2.24.1} and Young's inequality, we have
\begin{equation}\label{eq4.2.20.5}
\begin{split}
\aiminabs{\aimininner{\boldsymbol u_1 \cdot \nabla \zeta}{\Lambda^{2s_1}\eta}}
&\leq C\aiminnorm{\Lambda^{s_2+\alpha} \boldsymbol u_1}_{L^2}\aiminnorm{\Lambda^{s_1+\beta} \boldsymbol \zeta }_{L^2}\aiminnorm{\Lambda^{s_1}\eta}_{L^2} \\
&\leq \frac{C}{\kappa}\aiminnorm{\Lambda^{s_2+\alpha} \boldsymbol u_1}_{L^2}^2\aiminnorm{\Lambda^{s_1}\eta}_{L^2}^2+ \frac{\kappa}{6}\aiminnorm{\Lambda^{s_1+\beta} \eta }_{L^2}^2.
\end{split}
\end{equation}
For the term $\aimininner{\boldsymbol \zeta \cdot \nabla \theta_2}{\Lambda^{2s_1}\eta}$, we consider two cases: $s_1 \geq \beta$ and $s_1 < \beta$. Suppose that $s_1 \geq \beta$, so that
\begin{equation}
\aiminabs{\aimininner{\boldsymbol \zeta \cdot \nabla \theta_2}{\Lambda^{2s_1}\eta}} = \aiminabs{\aimininner{\Lambda^{s_1-\beta}(\boldsymbol \zeta \cdot \nabla \theta_2)}{\Lambda^{s_1+\beta}\eta}}
\leq \frac{C}{\kappa}\aiminnorm{\Lambda^{s_1-\beta}(\boldsymbol \zeta \cdot \nabla \theta_2)}_{L^2}^2+\frac{\kappa}{6} \aiminnorm{\Lambda^{s_1+\beta}\eta}_{L^2}^2.
\end{equation}
Applying Lemma~\ref{lem2.4} for $p_1$, $p_2$, $q_1$, $q_2$ satisfying \eqref{eq4.2.20.0}, we have
\begin{equation}
\aiminnorm{\Lambda^{s_1-\beta}(\boldsymbol \zeta \cdot \nabla \theta_2)}_{L^2}
\leq C(\aiminnorm{\Lambda^{s_1-\beta} \boldsymbol \zeta }_{L^{p_1}}\aiminnorm{\Lambda \theta_2}_{L^{p_2}}+\aiminnorm{\boldsymbol \zeta}_{L^{q_1}}\aiminnorm{\Lambda^{s_1-\beta+1}\theta_2}_{L^{q_2}}).
\end{equation}
Let $$p_1=\frac{2}{1-\beta}, \qquad p_2=\frac{2}{\beta}, \qquad q_1=\frac{2}{2\beta-1}, \qquad q_2=\frac{1}{1-\beta}.$$ Then, using the Sobolev embedding inequalities, we obtain
\[
\aiminnorm{\Lambda^{s_1-\beta} \boldsymbol \zeta }_{L^{p_1}}
\leq C\aiminnorm{\Lambda^{s_1} \boldsymbol \zeta }_{L^2}
\leq C\aiminnorm{\Lambda^{s_2} \boldsymbol \zeta }_{L^2},
\]

\[
\aiminnorm{\Lambda \theta_2 }_{L^{p_2}}
\leq C\aiminnorm{\Lambda^{2-\beta} \theta_2 }_{L^2}
\leq C\aiminnorm{\Lambda^{s_1+\beta} \theta_2 }_{L^2},
\]

\[
\aiminnorm{\boldsymbol \zeta}_{L^{q_1}}
\leq C\aiminnorm{\Lambda^{2-2\beta} \boldsymbol \zeta}_{L^2}
\leq C\aiminnorm{\Lambda^{s_2} \boldsymbol \zeta}_{L^2},
\]
and
\[
\aiminnorm{\Lambda^{s_1-\beta+1} \theta_2 }_{L^{q_2}}
\leq C\aiminnorm{\Lambda^{s_1+\beta} \theta_2 }_{L^2}.
\]
Then, we have
\begin{equation}\label{eq4.2.22}
\aiminnorm{\Lambda^{s_1-\beta}(\boldsymbol \zeta \cdot \nabla \theta_2 )}_{L^2}^2
\leq C\aiminnorm{\Lambda^{s_2} \boldsymbol \zeta }_{L^2}^2\aiminnorm{\Lambda^{s_1+\beta} \theta_2 }_{L^2}^2.
\end{equation}
Hence, using Young's inequality, we have
\begin{equation}\label{eq4.25}
\aiminabs{\aimininner{\boldsymbol \zeta \cdot \nabla \theta_2 }{\Lambda^{2s_1}\eta}} \leq \frac{C}{\kappa}\aiminnorm{\Lambda^{s_2} \boldsymbol \zeta }_{L^2}^2\aiminnorm{\Lambda^{s_1+\beta} \theta_2}_{L^2}^2+\frac{\kappa}{6} \aiminnorm{\Lambda^{s_1+\beta}\eta}_{L^2}^2.
\end{equation}
Otherwise, $s_1 < \beta$ so that $2s_1 < s_1+\beta$.
Using the Cauchy-Schwarz inequality, Poincar\'e's inequality and Young's inequality, we have
\begin{equation}
\begin{split}
\aiminabs{\aimininner{\boldsymbol \zeta \cdot \nabla \theta_2}{\Lambda^{2s_1}\eta}}
&\leq \aiminnorm{\boldsymbol \zeta \cdot \nabla \theta_2}_{L^2}\aiminnorm{\Lambda^{2s_1} \eta}_{L^2} \\
& \leq C\aiminnorm{\boldsymbol \zeta \cdot \nabla \theta_2}_{L^2}
\aiminnorm{\Lambda^{s_1+\beta} \eta}_{L^2} \\
& \leq \frac{C}{\kappa}\aiminnorm{\boldsymbol \zeta \cdot \nabla \theta_2}_{L^2}^2+
\frac{\kappa}{6}\aiminnorm{\Lambda^{s_1+\beta} \eta}_{L^2}^2.
\end{split}
\end{equation}
For any $p_1, p_2 >2$ such that $1/p_1+1/{p_2}= 1/2$, we have
\[
\aiminnorm{\boldsymbol \zeta \cdot \nabla \theta_2}_{L^2} \leq \aiminnorm{\boldsymbol \zeta}_{L^{p_1}}\aiminnorm{\nabla \theta_2}_{L^{p_2}} \leq
C\aiminnorm{\boldsymbol \zeta}_{L^{p_1}}\aiminnorm{\Lambda \theta_2}_{L^{p_2}}.
\]
Let $p_1= 2/(1-\beta)$ and $p_2=2/\beta$. Since $\beta \leq 1 \leq s_2$, using the Sobolev embedding inequality, we have
\begin{equation}
\aiminnorm{\boldsymbol \zeta}_{L^{p_1}} \leq C\aiminnorm{\Lambda^{\beta} \boldsymbol \zeta}_{L^2} \leq C\aiminnorm{\Lambda^{s_2} \boldsymbol \zeta}_{L^2},
\end{equation}
and
\begin{equation}
\aiminnorm{\Lambda \theta_2}_{L^{p_2}} \leq C\aiminnorm{\Lambda^{2-\beta} \theta_2}_{L^2} \leq C\aiminnorm{\Lambda^{s_1+\beta} \theta_2}_{L^2}.
\end{equation}
Hence,
\begin{equation} \label{eq4.26}
\aiminabs{\aimininner{\boldsymbol \zeta \cdot \nabla \theta_2 }{\Lambda^{2s_1}\eta}} \leq \frac{C}{\kappa}\aiminnorm{\Lambda^{s_2} \boldsymbol \zeta }_{L^2}^2\aiminnorm{\Lambda^{s_1+\beta} \theta_2}_{L^2}^2+ \frac{\kappa}{6} \aiminnorm{\Lambda^{s_1+\beta}\eta}_{L^2}^2.
\end{equation}
Therefore, using \eqref{eq4.2.21}, \eqref{eq4.2.20.4}, \eqref{eq4.2.25}, \eqref{eq4.2.20.5}, \eqref{eq4.25} and \eqref{eq4.26}, we have
\begin{equation}\label{eq4.2.26}
\begin{split}
\frac{1}{2} \frac{\mathrm{d}}{\mathrm{d}t} \aiminnorm{\Lambda^{s_2}\boldsymbol \zeta}_{L^2}^2 + \nu \aiminnorm{\Lambda^{s_2+\alpha} \boldsymbol \zeta}_{L^2}^2
\leq &\frac{\kappa}{6} \aiminnorm{\Lambda^{s_1+\beta}\eta}_{L^2}^2
+ \frac{\nu}{2} \aiminnorm{\Lambda^{s_2+\alpha}\boldsymbol \zeta}_{L^2}^2\\
&+C(\aiminnorm{\Lambda^{s_2+\alpha} \boldsymbol u_1}_{L^2}^2+\aiminnorm{\Lambda^{s_2+\alpha} \boldsymbol u_2}_{L^2}^2)\aiminnorm{\Lambda^{s_2}\boldsymbol \zeta}_{L^2}^2,
\end{split}
\end{equation}
and
\begin{equation}\label{eq4.2.27}
\begin{split}
\frac{1}{2} \frac{\mathrm{d}}{\mathrm{d}t} \aiminnorm{\Lambda^{s_1}\eta}_{L^2}^2 + &\kappa \aiminnorm{\Lambda^{s_1+\beta} \eta}_{L^2}^2
\leq \frac{\kappa}{3} \aiminnorm{\Lambda^{s_1+\beta}\theta}_{L^2}^2\\
&\qquad+ C\aiminnorm{\Lambda^{s_2+\alpha} \boldsymbol u_1}_{L^2}^2 \aiminnorm{\Lambda^{s_1}\eta}_{L^2}^2
+C\aiminnorm{\Lambda^{s_1+\beta} \theta_2}_{L^2}^2 \aiminnorm{\Lambda^{s_2}\boldsymbol \zeta}_{L^2}^2.
\end{split}
\end{equation}
Summing \eqref{eq4.2.26} and \eqref{eq4.2.27} gives
\[
\begin{split}
\frac{\mathrm{d}}{\mathrm{d}t} &(\aiminnorm{\Lambda^{s_2}\boldsymbol \zeta}_{L^2}^2+\aiminnorm{\Lambda^{s_1}\eta}_{L^2}^2) +
\nu \aiminnorm{\Lambda^{s_2+\alpha} \boldsymbol \zeta}_{L^2}^2 +\kappa \aiminnorm{\Lambda^{s_1+\beta} \eta}_{L^2}^2 \\
&\leq C(\aiminnorm{\Lambda^{s_2+\alpha} \boldsymbol u_1}_{L^2}^2+\aiminnorm{\Lambda^{s_2+\alpha} \boldsymbol u_2}_{L^2}^2+\aiminnorm{\Lambda^{s_2+\beta} \theta_2}_{L^2}^2 )\aiminnorm{\Lambda^{s_2}\boldsymbol \zeta}_{L^2}^2+C\aiminnorm{\Lambda^{s_2+\alpha} \boldsymbol u_1}_{L^2}^2 \aiminnorm{\Lambda^{s_1}\eta}_{L^2}^2 \\
&\leq C(\aiminnorm{\Lambda^{s_2+\alpha} \boldsymbol u_1}_{L^2}^2+\aiminnorm{\Lambda^{s_2+\alpha} \boldsymbol u_2}_{L^2}^2+\aiminnorm{\Lambda^{s_1+\beta} \theta_2}_{L^2}^2 )
(\aiminnorm{\Lambda^{s_2}\boldsymbol \zeta}_{L^2}^2+\aiminnorm{\Lambda^{s_1}\eta}_{L^2}^2),
\end{split}
\]
and using the Gronwall inequality in Lemma~\ref{lemma2.0.1}, we obtain that
\[
\begin{split}
&\aiminnorm{\Lambda^{s_2}\boldsymbol \zeta(t)}_{L^2}^2+\aiminnorm{\Lambda^{s_1}\eta(t)}_{L^2}^2 \\
\leq &C(\aiminnorm{\Lambda^{s_2}\boldsymbol \zeta(0)}_{L^2}^2+\aiminnorm{\Lambda^{s_1}\eta(0)}_{L^2}^2)
\mathrm{exp} \left\{\int_0^t \aiminnorm{\Lambda^{s_2+\alpha} \boldsymbol u_1(s)}_{L^2}^2+\aiminnorm{\Lambda^{s_2+\alpha} \boldsymbol u_2(s)}_{L^2}^2+\aiminnorm{\Lambda^{s_1+\beta} \theta_2(s)}_{L^2}^2 {\mathrm{d}s}    \right\}.
\end{split}
\]
Notice that from \eqref{eq3.5.11} that
\[
\int_0^T \aiminnorm{\Lambda^{s_2+\alpha} \boldsymbol u_1(s)}_{L^2}^2 {\mathrm{d}s} < \infty,\hspace{3pt} \int_0^T \aiminnorm{\Lambda^{s_2+\alpha} \boldsymbol u_2(s)}_{L^2}^2 {\mathrm{d}s} < \infty, \hspace{3pt} \mathrm{and} \hspace{2pt}\int_0^T \aiminnorm{\Lambda^{s_1+\beta} \theta_2(s)}_{L^2}^2 {\mathrm{d}s} < \infty.
\]
By the Riesz lemma, since $\aiminnorm{\Lambda^{s_1}\eta(0)}_{L^2}$ and $\aiminnorm{\Lambda^{s_2}\boldsymbol \zeta(0)}_{L^2}$ go to zero, then $\aiminnorm{\Lambda^{s_1}\eta(t)}_{L^2}$ and $\aiminnorm{\Lambda^{s_2}\boldsymbol \zeta(t)}_{L^2}$ converge to zero for almost every $t$. Since $\aiminnorm{\Lambda^{s_1}\eta}_{L^2}$ and $\aiminnorm{\Lambda^{s_2}\boldsymbol \zeta}_{L^2}$ are continuous in $t$ as we have proved in Section \ref{sec4.1}, then $(\aiminnorm{\Lambda^{s_1}\eta(0)}_{L^2}$, $\aiminnorm{\Lambda^{s_2}\boldsymbol \zeta(0)}_{L^2})$ converges to zero for every $t$.
\end{proof}

\section*{Acknowledgments}
The authors would like to thank Prof. M. Jolly for an interesting discussion on this work.
AH would like to thank Prof. V. Pata for the discussion on the variants of the Gronwall lemma.
This work was partially supported by the National Science Foundation under the grant NSF DMS-1418911.3, and by the Research Fund of Indiana University.

\bibliographystyle{amsalpha}
\providecommand{\bysame}{\leavevmode\hbox to3em{\hrulefill}\thinspace}
\providecommand{\MR}{\relax\ifhmode\unskip\space\fi MR }
\providecommand{\MRhref}[2]{%
  \href{http://www.ams.org/mathscinet-getitem?mr=#1}{#2}
}
\providecommand{\href}[2]{#2}

\end{document}